\numberwithin{equation}{section}
\newtheorem{theorem}{Theorem}[section]
\newtheorem{lemma}[theorem]{Lemma}
\newtheorem{remark}[theorem]{Remark}
\newtheorem{proposition}[theorem]{Proposition}
\newtheorem*{claim*}{Claim}
\newcommand{\ef}{\eqref}
\newcommand{\dis}{\displaystyle}
\newcommand{\C}{\mathbb{C}}
\newcommand{\R}{\mathbb{R}}
\newcommand{\ep}{\varepsilon}
\newcommand{\la}{\lambda}
\newcommand{\RN}{{\mathbb{R}^N}}
\newcommand{\intR}{\int_{\mathbb{R}^3}}
\newcommand{\HT}{H^1(\R^3, \C)}
\newcommand{\CA}{\mathcal{A}}
\newcommand{\CI}{\mathcal{I}}
\newcommand{\CM}{\mathcal{M}}
\newcommand{\CS}{\mathcal{S}}
\newcommand{\IM}{\,\mathrm{Im}}
\newcommand{\RE}{\,\mathrm{Re}}
\newcommand{\Div}{\,\mathrm{div}}
\title
[Schr\"odinger-Poisson system with a doping profile]
{Strong instability of standing waves 
for $L^2$-supercritical Schr\"odinger-Poisson system with a doping profile}
\author[M.~Colin]{Mathieu Colin}
\author[T.~Watanabe]{Tatsuya Watanabe}
\address[M.~Colin]{\newline\indent
University of Bordeaux, CNRS, Bordeaux INP, 
IMB, UMR 5251,  F-33400, Talence, France
\newline\indent
 IMB, UMR 5251,  F-33400, Talence, France
}
\email{mathieu.colin@math.u-bordeaux.fr}
\address[T.~Watanabe]{\newline\indent 
Department of Mathematics, Faculty of Science, Kyoto Sangyo University,
\newline\indent
Motoyama, Kamigamo, Kita-ku, Kyoto-City, 603-8555, Japan}
\email{tatsuw@cc.kyoto-su.ac.jp}
\thanks{}
\subjclass[2010]{35J20, 35B35, 35B44, 35Q55}
\date{\today}
\keywords{nonlinear Schr\"odinger-Poisson system, 
standing wave, doping profile, 
ground state solution, strong instability}
\begin{document}

\begin{abstract}
This paper is concerned with 
the nonlinear Schr\"odinger-Poisson system with a doping profile.
We are interested in the strong instability of standing waves 
associated with ground state solutions in the $L^2$-supercritical case.
The presence of a doping profile causes several difficulties, especially 
in examining geometric shapes of fibering maps
along an $L^2$-invariant scaling curve.
Furthermore, the classical approach by Berestycki-Cazenave for the strong instability
cannot be applied to our problem due to a remainder term caused by the doping profile.
To overcome these difficulties,
we establish a new energy inequality associated with the $L^2$-invariant scaling
and adopt the strong instability result developed in \cite{FO1}.
When the doping profile is a characteristic function supported 
on a bounded smooth domain, some geometric quantities
related to the domain, such as the mean curvature,
are responsible for the strong instability of standing waves.
\end{abstract}

\maketitle

\section{Introduction}

In this paper, we are concerned with the following nonlinear 
Schr\"odinger-Poisson system:
\begin{equation} \label{eq:1.1}
\begin{cases}
& -\Delta u+\omega u+e \phi u=|u|^{p-1} u \\
& -\Delta \phi =\frac{e}{2} \left( |u|^2- \rho(x) \right)
\end{cases} 
\quad \hbox{in} \ \R^3,
\end{equation}
where $\omega >0 $, $e>0$.
Equation \ef{eq:1.1} appears as a stationary problem for the 
time-dependent nonlinear Schr\"odinger-Poisson system:
\begin{equation} \label{eq:1.2}
\begin{cases}
i \psi_t + \Delta \psi - e\phi \psi + |\psi|^{p-1} \psi =0 
\quad \hbox{in} \ \R_+ \times \R^3, \\
-\Delta \phi = \frac{e}{2}\big( |\psi|^2-\rho(x) \big) 
\quad \hbox{in} \ \R_+ \times \R^3, \\
\psi(0,x)=\psi_0 \quad \hbox{in } \R^3.
\end{cases}
\end{equation}
Indeed when we look for a standing wave of the form: $\psi(t,x)=e^{i \omega t} u(x)$,
we are led to the elliptic problem \ef{eq:1.1}.
Here, we are interested in the strong instability of standing waves 
corresponding to ground state solutions of \ef{eq:1.1} when $\frac{7}{3} < p < 5$.

The Schr\"odinger-Poisson system appears 
in various fields of physics, such as quantum mechanics,
black holes in gravitation and plasma physics.
Especially, this system plays an important role
in the study of semi-conductor theory; see \cite{Je, MRS, Sel},
and then the function $\rho(x)$ is referred as \textit{impurities} 
or a \textit{doping profile}.
The doping profile comes from 
the difference of density numbers of positively charged donor ions
and negatively charged acceptor ions,
and the most typical examples are characteristic functions, step functions 
or Gaussian functions.
Equation \ef{eq:1.1} also appears as a stationary problem
for the Maxwell-Schr\"odinger system.
We refer to \cite{BF, CW, CW2} for the physical background
and the stability result of standing waves for the Maxwell-Schr\"odinger system.
In this case, the constant $e$ describes the strength of the interaction
between a particle and an external electromagnetic field.

On one hand, the nonlinear Schr\"odinger-Poisson system with $\rho \equiv 0$:
\begin{equation} \label{eq:1.3}
\begin{cases}
& -\Delta u+\omega u+e \phi u=|u|^{p-1} u \\
& -\Delta \phi =\frac{e}{2} |u|^2
\end{cases} 
\quad \hbox{in} \ \R^3
\end{equation}
has been studied widely in the last two decades.
Especially, the existence of non-trivial solutions and 
ground state solutions of \ef{eq:1.3}
has been considered in detail.
Furthermore, the existence of associated $L^2$-constraint minimizers 
depending on $p$ and the size of the mass
and their stability have been investigated as well.
We refer to e.g. \cite{AP, BJL, BS1, CDSS, CW, K2, LZH, R, TC, WL1, WL2, ZZ}
and references therein.
The strong instability of standing waves associated with ground state solutions of
\ef{eq:1.3} has been established in \cite{BJL} for the $L^2$-supercritical case
and in \cite{FCW} for the $L^2$-critical case respectively.

On the other hand, the nonlinear Schr\"odinger-Poisson system with
a doping profile is less studied.
In \cite{DeLeo, DR}, the corresponding 1D problem has been considered.
Moreover, the linear Schr\"odinger-Poisson system 
(that is, the problem \ef{eq:1.1} without $|u|^{p-1}u$)
with a doping profile in $\R^3$ has been studied in \cite{Ben1, Ben2}.
In \cite{CW4}, the authors have investigated the existence of 
stable standing waves for \ef{eq:1.2} by considering the corresponding
$L^2$-minimization problem in the case $2<p< \frac{7}{3}$.
The existence of ground state solutions for \ef{eq:1.1}
in the case $2<p<5$ has been obtained in \cite{CW5}. 
Moreover in \cite{CW5}, 
the authors have established the relation between
ground state solutions of \ef{eq:1.1} 
and $L^2$-constraint minimizers obtained in \cite{CW4}.
But as far as we know, there is no literature 
concerning with the strong instability of standing waves 
associated with ground state solutions of \ef{eq:1.1} in the $L^2$-supercritical case,
which is exactly the purpose of this paper.

\smallskip
To state our main results, let us give some notations. 
For $u \in H^1(\R^3,\C)$, 
the energy functional associated with \ef{eq:1.1} is given by
\begin{align}
\mathcal{I}(u) &= \frac{1}{2} \intR |\nabla u |^2 \,dx + \frac{\omega}{2} \intR |u|^2 \,dx
-\frac{1}{p+1} \intR |u|^{p+1} \,d x + e^2 \CA(u). \label{eq:1.4}
\end{align}
Here we denote the nonlocal term by $S(u)=S_0(u)+S_1$ with
\begin{align}
S_0(u)(x)&:=(-\Delta)^{-1} \left( \frac{|u(x)|^2}{2} \right) 
=\frac{1}{8 \pi |x|} *|u(x)|^2, \label{nonlocal1} \\
S_1(x)&:= (-\Delta)^{-1} \left( \frac{- \rho(x)}{2} \right) 
= -\frac{1}{8 \pi |x|} * \rho(x),  \label{nonlocal2}
\end{align}
and the functional corresponding to the nonlocal term by
\[
\begin{aligned}
\CA(u) 
&:= \frac{1}{4} \intR S(u) \big( |u|^2-\rho(x) \big) \,dx \\
&= \frac{1}{32\pi} \intR \intR
\frac{\big( |u(x)|^2-\rho(x) \big) \big( |u(y)|^2-\rho(y) \big)}{|x-y|} \,dx\,dy.
\end{aligned}
\]
A function $u_0$ is said to be a \textit{ground state solution} (GSS) 
of \ef{eq:1.1}
if $u_0$ has a least energy among all nontrivial solutions of \ef{eq:1.1},
namely $u_0$ satisfies
\[
\mathcal{I}(u_0) = \inf \{ \mathcal{I}(u) \mid u \in \HT
\setminus \{0 \}, \ \mathcal{I}'(u)=0 \}.
\]
For the doping profile $\rho$, we assume that
\begin{equation} \tag{A1} \label{A1}
\rho(x) \in L^{\frac{6}{5}}(\R^3) \cap L^q_{loc}(\R^3) \ \text{for some} \ q>3, 
\ x \cdot \nabla \rho(x) \in L^{\frac{6}{5}}(\R^3),
\ x \cdot (D^2 \rho(x) x) \in L^{\frac{6}{5}}(\R^3),
\end{equation}
where $D^2 \rho$ is the Hessian matrix of $\rho$, and 
\begin{equation} \tag{A2} \label{A2}
\rho(x) \ge 0, \ \not\equiv 0 \quad \text{for} \ x \in \R^3.
\end{equation}
Typical examples are the Gaussian function $\rho(x) =\ep e^{-\alpha |x|^2}$
and $\rho(x) =\frac{\ep}{1+|x|^{\alpha}}$ for $\alpha> \frac{5}{2}$.
In this setting, the following result is known; see \cite{CW5}.

\begin{proposition} \label{prop:1.1}
Suppose that $2<p<5$ and assume that {\rm(A1)}-{\rm(A2)} are satisfied.
There exists $\rho_0$ independent of $e$, $\rho$ such that if
\[
e^2 \left(\|\rho\|_{L^{\frac{6}{5}}(\R^3)}
+\|x \cdot \nabla \rho\|_{L^{\frac{6}{5}}(\R^3)}
+\| x \cdot (D^2 \rho x ) \|_{L^{\frac{6}{5}}(\R^3)} \right) \le \rho_0,
\]
then \ef{eq:1.1} has a ground state solution $u_0$.
Moreover any ground state solution of \ef{eq:1.1} is real-valued
up to phase shift.
\end{proposition}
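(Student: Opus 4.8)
The plan is to regard \ef{eq:1.1} as a small perturbation of the doping-free system \ef{eq:1.3}, to produce a mountain-pass solution, and to recover compactness from a strict energy inequality with \ef{eq:1.3}. First I would carry out the variational reduction: since $\rho\in L^{\frac65}(\R^3)$, for every $u\in\HT$ the source $|u|^2-\rho$ lies in $L^{\frac65}(\R^3)$, so Hardy--Littlewood--Sobolev and Sobolev yield a unique $\phi_u=eS(u)\in D^{1,2}(\R^3)$ depending continuously on $u$; substituting, \ef{eq:1.1} becomes the single equation $-\Delta u+\omega u+e^2S(u)u=|u|^{p-1}u$, the Euler--Lagrange equation of the $C^1$ functional $\mathcal{I}$ in \ef{eq:1.4}. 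Expanding the nonlocal term one computes
\[
\mathcal{I}(u)-\mathcal{I}(0)=\mathcal{I}_\infty(u)-\frac{e^2}{2}\IT S_0(u)\rho\,dx,
\]
where $\mathcal{I}_\infty$ is the energy functional associated with \ef{eq:1.3} and $\mathcal{I}(0)=\frac{e^2}{32\pi}\IT\IT\frac{\rho(x)\rho(y)}{|x-y|}\,dx\,dy>0$; by (A2), $\IT S_0(u)\rho\,dx>0$ whenever $u\not\equiv 0$, so the extra term is strictly negative there.

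Next I would check the mountain-pass geometry and extract a bounded Palais--Smale sequence. Since $\IT S_0(u)\rho\,dx\le C\|\rho\|_{L^{6/5}(\R^3)}\|u\|_{\HT}^2$, the smallness hypothesis makes the quadratic part of $\mathcal{I}-\mathcal{I}(0)$ dominate near the origin, so $0$ is a strict local minimum; because straight rays need not tend to $-\infty$ when $2<p\le 3$, I would instead descend below $\mathcal{I}(0)$ along (a two-parameter family built from) the $L^2$-invariant dilation $u_\theta(x)=\theta^{3/2}u(\theta x)$, exactly as in the doping-free case, obtaining a level $c_{mp}\in(0,\infty)$ and a sequence $\{u_n\}$ with $\mathcal{I}(u_n)\to c_{mp}+\mathcal{I}(0)$ and $\mathcal{I}'(u_n)\to 0$. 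Boundedness of $\{u_n\}$ is where (A1) is used in full: coupling the equation with its Pohozaev identity — whose $\rho$-contributions are built precisely from $x\cdot\nabla\rho$ and $x\cdot(D^2\rho\,x)$ — controls $\|u_n\|_{\HT}$; equivalently, one may work from the outset on the Nehari--Pohozaev manifold $\{u\neq 0:J(u)=0\}$, whose very definition requires the integrability assumed in (A1).

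For the compactness, let $u_n\rightharpoonup u$ in $\HT$. The quadratic form $v\mapsto\IT S_0(v)\rho\,dx=\IT|v|^2(-S_1)\,dx$ is weakly continuous on bounded sets, since $-S_1=\frac{1}{8\pi|x|}*\rho$ lies in $L^6(\R^3)$ and vanishes at infinity by (A1). Hence, if $u=0$, then $\{u_n\}$ is asymptotically a Palais--Smale sequence for the translation-invariant $\mathcal{I}_\infty$ at level $c_{mp}$; by the profile decomposition for \ef{eq:1.3}, either $\{u_n\}$ vanishes, which is impossible since $c_{mp}>0$, or $c_{mp}$ splits into a sum of energies of nontrivial solutions of \ef{eq:1.3}, each at least the least energy $m_\infty$ of \ef{eq:1.3}. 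But evaluating $\mathcal{I}-\mathcal{I}(0)=\mathcal{I}_\infty-\frac{e^2}{2}\IT S_0(\cdot)\rho\,dx$ along a mountain-pass path for $\mathcal{I}_\infty$ through a ground state of \ef{eq:1.3}, the strict negativity of the extra term forces $c_{mp}<m_\infty$ — a contradiction. Therefore $u\neq 0$; passing to the limit in the equation (local compactness together with continuity of the nonlocal term) shows $u$ solves \ef{eq:1.1}, and a Brezis--Lieb argument on the $L^{p+1}$ and nonlocal terms, together with the weak continuity of the $\rho$-cross term, gives $\mathcal{I}(u)\le c_{mp}+\mathcal{I}(0)$ with no energy escaping, hence $\mathcal{I}(u)=c_{mp}+\mathcal{I}(0)$. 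The crux of the whole argument is exactly this step: the doping profile destroys translation invariance, so mass could in principle run off to spatial infinity, and it is the strict inequality $c_{mp}<m_\infty$ — resting on $\rho\ge 0,\ \not\equiv 0$ and on the smallness controlling the perturbation uniformly — that excludes this.

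It remains to identify $u$ as a ground state and to establish real-valuedness. Using the scaling-curve / fibering-map analysis of \cite{CW5} — where the smallness of all three quantities in (A1) keeps the perturbed maps $\theta\mapsto\mathcal{I}(w_\theta)$ close in shape to the doping-free ones, hence with a unique maximum at $\theta=1$ when $w$ solves \ef{eq:1.1} — one shows that $c_{mp}+\mathcal{I}(0)$ equals the least energy over all nontrivial solutions, so $u$ is a ground state. Finally, for every $v\in\HT$ one has $\mathcal{I}(v)=\mathcal{I}(|v|)+\frac12\big(\|\nabla v\|_{\LT}^2-\|\nabla |v|\|_{\LT}^2\big)$, since $\|v\|_{\LT}$, $\|v\|_{L^{p+1}(\R^3)}$ and $\mathcal{A}(v)$ depend only on $|v|$, and the diamagnetic inequality gives $\mathcal{I}(v)\ge\mathcal{I}(|v|)$, with equality if and only if $v=e^{i\gamma}|v|$ for a constant $\gamma\in\R$. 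Applying this to a ground state $u_0$ and comparing with the competitor $|u_0|$ rescaled onto the Nehari--Pohozaev manifold, together with the identification of $c_{mp}+\mathcal{I}(0)$ with the least energy, forces $\mathcal{I}(u_0)=\mathcal{I}(|u_0|)$; equality then holds in the diamagnetic inequality, and $u_0$ is real-valued up to phase.
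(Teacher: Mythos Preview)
The paper does not prove Proposition~\ref{prop:1.1}; it is quoted from \cite{CW5}, and Proposition~\ref{prop:4.1} again refers to \cite{CW5} for the proof. What the present paper does tell us about that proof is its architecture: the ground state is obtained as a minimizer of $I$ on the Nehari--Pohozaev set $\mathcal{M}=\{J(u)=0\}$, with $J=2N-P$ built from \ef{eq:3.9}--\ef{eq:3.10}. Your proposal takes a genuinely different route: a mountain-pass construction for $\mathcal{I}-\mathcal{I}(0)=\mathcal{I}_\infty-\tfrac{e^2}{2}\int S_0(\cdot)\rho$, with compactness recovered from the strict inequality $c_{mp}<m_\infty$ forced by (A2). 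Both strategies are standard for perturbations of translation-invariant problems, and yours is perfectly viable; the trade-off is that the constrained-minimization approach of \cite{CW5} gives the ground-state characterization immediately (and is what the rest of the paper relies on through Proposition~\ref{prop:4.1}), whereas your route needs the extra fibering step at the end to identify the mountain-pass level with the least energy.

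One factual correction: you write that the $\rho$-contributions to the Pohozaev identity are ``built precisely from $x\cdot\nabla\rho$ and $x\cdot(D^2\rho\,x)$''. This is not so. From \ef{eq:3.10}, the Pohozaev functional $P$ contains only $E_1$ (coming from $\rho$) and $E_2$ (coming from $x\cdot\nabla\rho$); there is no second-order term. The quantity $x\cdot(D^2\rho\,x)$ --- equivalently $E_3$ --- appears only when one differentiates the scaled functionals a second time, as in the computation of $F''(\lambda)$ in Lemma~\ref{lem:5.6} or in Step~3 of Lemma~\ref{lem:5.3}. In the existence proof this enters not through boundedness of a Palais--Smale sequence (for which $\rho$ and $x\cdot\nabla\rho$ suffice) but rather in the fibering/natural-constraint analysis: showing that the Lagrange multiplier on $\mathcal{M}$ vanishes, or that the maps $\theta\mapsto I(w^\theta)$ behave as claimed, requires control of a second variation along the $L^2$-invariant scaling, and that is where $\|x\cdot(D^2\rho\,x)\|_{6/5}$ is used. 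Since you invoke exactly this fibering analysis in your final paragraph, your sketch ultimately uses the hypothesis in the right place; only the attribution to the Pohozaev identity is off.
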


It was shown in \cite{CW5} that the ground state solution $u_0$ is 
characterized as a minimizer of $\mathcal{I}$
on a \textit{Nehari-Pohozaev} set 
$\{ J(u)=0 \}$, 
where $J(u)=2N(u) - P(u)$, 
$N(u)=0$ is the Nehari identity
and $P(u)=0$ is the Pohozaev identity. See also Section 4 below.

\smallskip
Next we proceed to the instability result.
As mentioned later, 
the Cauchy problem for \ef{eq:1.2} is locally well-posed 
in the energy space $H^1(\R^3,\C)$ 
and the maximal existence time $T^*=T^*(\psi_0)$ corresponding to 
$\psi_0 \in H^1(\R^3,\C)$ is well defined.
We say that the (unique) solution $\psi(t,x)$ of \ef{eq:1.2} blows up in finite time
if $T^* < \infty$.
Furthermore, the standing wave $e^{i \omega t}u(x)$ of \ef{eq:1.2}
is said to be \textit{strongly unstable}
if for any $\ep>0$, there exists $\psi_0 \in H^1(\R^3, \C)$
such that $\| \psi_0 - u \|_{H^1(\R^3)} < \ep$
but the solution $\psi(t,x)$ of \ef{eq:1.2} with $\psi(0,x)=\psi_0$
blows up in finite time.

In order to prove the strong instability of standing waves 
corresponding to ground state solutions of \ef{eq:1.1},
we further impose the following condition on the doping profile:
\begin{equation} \tag{A3} \label{A3}
\text{There exist $\alpha>2$ and $C>0$ such that} \ 
\rho(x) \le \frac{C}{1+|x|^{\alpha}} \quad \text{for} \ x \in \R^3.
\end{equation}
We will see that (A3) guarantees 
the exponential decay of ground state solutions of \ef{eq:1.1} at infinity.
Under these preparations,
we are able to state and show the following result.

\begin{theorem} \label{thm:1.1}
Suppose that $\frac{7}{3}<p<5$ and assume that {\rm(A1)}-{\rm(A3)} hold.
There exists $\rho_0$ independent of $e$, $\rho$ such that if
\[
e^2 \left(\|\rho\|_{L^{\frac{6}{5}}(\R^3)}
+\|x \cdot \nabla \rho\|_{L^{\frac{6}{5}}(\R^3)}
+\| x \cdot (D^2 \rho x ) \|_{L^{\frac{6}{5}}(\R^3)} \right) \le \rho_0,
\]
then the standing wave $e^{i \omega t} u_0$ is strongly unstable.
\end{theorem}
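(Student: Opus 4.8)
The plan is to combine the variational characterization of the ground state obtained in \cite{CW5} with an $L^2$-invariant rescaling, a virial identity, and the abstract strong-instability criterion of \cite{FO1}; the genuinely new ingredient is an energy inequality along the $L^2$-invariant scaling that keeps under control the remainder produced by the doping profile. As a preliminary step I would fix a ground state $u_0$ which, by Proposition~\ref{prop:1.1}, may be taken real-valued and positive, and which by \cite{CW5} satisfies $\CI(u_0)=d:=\inf\{\CI(u)\mid u\in\HT\setminus\{0\},\ J(u)=0\}$. I would then show that (A3) forces exponential decay of $u_0$: since $\phi_0=e(S_0(u_0)+S_1)$ vanishes at infinity (the Newtonian potentials of $|u_0|^2\in L^1(\R^3)$ and of $\rho$, which lies in $L^1(\R^3)$ because of the pointwise bound in (A3), both tend to $0$), the equation $-\Delta u_0+\omega u_0=|u_0|^{p-1}u_0-e\phi_0 u_0$ gives $-\Delta u_0+\tfrac{\omega}{2}u_0\le 0$ outside a large ball, so $0<u_0(x)\le Ce^{-c|x|}$ by comparison; in particular $|x|u_0,\ |x|\nabla u_0\in L^2(\R^3)$.

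Next I would introduce the $L^2$-invariant scaling $u_0^\lambda(x):=\lambda^{3/2}u_0(\lambda x)$, so $\|u_0^\lambda\|_{L^2}=\|u_0\|_{L^2}$, and analyse the fibering map $\Phi(\lambda):=\CI(u_0^\lambda)$. The kinetic, mass and power terms scale as $\lambda^2$, $1$ and $\lambda^{3(p-1)/2}$, while $e^2\CA(u_0^\lambda)$ splits into the homogeneous Hartree part ($\propto\lambda$) and cross/self terms containing $\rho(\cdot/\lambda)$, which are \emph{not} homogeneous in $\lambda$. Using (A1) one may differentiate $\Phi$ twice under the integral, expressing $\Phi'(1)$ and $\Phi''(1)$ in terms of $u_0,\nabla u_0$ and of $\rho$, $x\cdot\nabla\rho$, $x\cdot(D^2\rho\,x)$. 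Since $u_0$ solves \eqref{eq:1.1} one gets $\Phi(1)=d$ and $\Phi'(1)=0$; the crucial point is that for $p>\tfrac73$ the doping-free part of $\Phi''(1)$ is a strictly negative quantity (the classical $L^2$-supercritical computation), while the doping contribution is bounded by $Ce^2(\|\rho\|_{L^{6/5}}+\|x\cdot\nabla\rho\|_{L^{6/5}}+\|x\cdot(D^2\rho\,x)\|_{L^{6/5}})$. Hence, after shrinking $\rho_0$ if necessary, $\Phi''(1)<0$, so $\Phi(\lambda)<d$ for $\lambda\ne 1$ near $1$; a parallel estimate shows that the virial quantity introduced below is negative along $u_0^\lambda$ for $\lambda\in(1,1+\delta)$, i.e. $u_0^\lambda$ lies in the relevant invariant region. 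Disentangling the non-homogeneous doping contributions and keeping them subordinate to the supercritical concavity is the first technical hurdle, and is why the smallness condition on $e^2\|\rho\|$ reappears in the statement.

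Then I would turn to the dynamics. The Cauchy problem \eqref{eq:1.2} is locally well-posed in $\HT$, and for data with $|x|\psi_0\in L^2(\R^3)$ the weight can be propagated on the existence interval (or one argues with a truncated virial functional), so for $\psi_0=u_0^\lambda$ the quantity $V(t)=\||x|\psi(t)\|_{L^2}^2$ is $C^2$ on the maximal interval, with $V'(0)=0$ (because $u_0^\lambda$ is real) and a virial identity of the form $V''(t)=8\mathcal{Q}(\psi(t))+\mathcal{R}(\psi(t))$, where $\mathcal{Q}$ is the usual dilation functional and $\mathcal{R}$ collects the remainder involving the doping profile, finite and continuous by (A1). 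Here the classical Berestycki--Cazenave monotonicity is unavailable; following \cite{FO1} I would instead prove the \emph{energy inequality}: there exists $\beta>0$ such that $8\mathcal{Q}(u)+\mathcal{R}(u)\le\beta(\CI(u)-d)$ for every $u$ in a neighbourhood of the orbit $\{u_0^\lambda\}_{\lambda\in(1,1+\delta)}$ that is invariant under the flow and determined by the variational characterization on $\{J=0\}$ (the invariance coming, as usual, from conservation of $\CI$ together with the impossibility of $\mathcal{Q}$-type quantities returning to $0$ while $\CI<d$). Its proof rescales $u$ by the $L^2$-invariant flow onto $\{J=0\}$, uses $\CI\ge d$ there, and crucially invokes (A1), (A3) and the smallness of $\rho_0$ to absorb $\mathcal{R}$. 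Establishing this inequality — showing that the doping remainder in the virial identity is dominated by the energy deficit — is the main obstacle of the paper and the reason the abstract framework of \cite{FO1} must be invoked rather than the classical argument.

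Finally I would conclude as follows. Fix $\lambda\in(1,1+\delta)$. Because the energy $E$ and the mass are conserved, $\CI(\psi(t))=E(\psi(t))+\tfrac{\omega}{2}\|\psi(t)\|_{L^2}^2=\CI(u_0^\lambda)<d$ for all $t$; combining this with the energy inequality along the invariant region (into which $u_0^\lambda$ was placed in the second step) gives $V''(t)=8\mathcal{Q}(\psi(t))+\mathcal{R}(\psi(t))\le\beta(\CI(u_0^\lambda)-d)=:-c_0<0$ for all $t$ in the maximal interval. If the solution were global, then $V(t)\le V(0)-\tfrac{c_0}{2}t^2\to-\infty$, contradicting $V\ge 0$; hence $\psi$ blows up in finite time. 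Since $u_0^\lambda\to u_0$ in $\HT$ as $\lambda\downarrow 1$, for every $\varepsilon>0$ one obtains such a $\psi_0$ with $\|\psi_0-u_0\|_{H^1(\R^3)}<\varepsilon$ whose solution blows up in finite time, so the standing wave $e^{i\omega t}u_0$ is strongly unstable.
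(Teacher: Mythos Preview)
Your overall architecture matches the paper's: exponential decay of $u_0$ from (A3), the $L^2$-invariant scaling $u_0^\lambda$, concavity of the fibering map for $p>\tfrac73$, an energy inequality in the spirit of \cite{FO1} proved by rescaling onto $\{J=0\}$, and a virial/convexity blow-up. A few points, however, are blurred in ways that would cause trouble if you tried to fill in the details.

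First, the virial identity is \emph{exact}: once one defines $Q$ by \eqref{energy} (i.e.\ including the three nonlocal terms $S_0,S_1,S_2$), one has $V''(t)=8Q(\psi(t))$ on the nose; there is no floating remainder $\mathcal R$ to be absorbed later. The doping terms are already inside $Q$, and the analysis is of $Q$ itself.

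Second, and more substantively, the invariant region is not merely ``a neighbourhood of the orbit'' but the set $\mathcal B=\{I(u)<I(u_0),\ J(u)<0,\ Q(u)<0\}$, with \emph{two} sign constraints. The key energy inequality $\tfrac12 Q(u)\le I(u)-I(u_0)$ (your $8\mathcal Q+\mathcal R\le\beta(\CI-d)$) is only proved under the hypotheses $J(u)\le 0$ \emph{and} $Q(u)\le 0$; cf.\ Remark~\ref{rem:5.5}. The reason is that the estimate of the remainder produced by $\rho$ in the fibering map $f(\lambda)=I(u^\lambda)-\tfrac{\lambda^2}{2}Q(u)$ only gives a bound of size $(1-\lambda)^2\|u\|_{H^1}^2$, and one needs $J(u)\le 0$ to convert $\|u\|_{H^1}^2$ into $\|u\|_{p+1}^{p+1}$ so that the supercritical concavity can dominate it. Consequently the invariance argument has an order: conservation of $I$ keeps $I(\psi(t))<I(u_0)$; this forbids $J(\psi(t))=0$ (by the Nehari--Pohozaev characterization), so $J(\psi(t))<0$; and only then, \emph{using} $J\le 0$, the energy inequality forbids $Q(\psi(t))=0$. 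Your sentence ``the impossibility of $\mathcal Q$-type quantities returning to $0$ while $\CI<d$'' elides this dependence on $J$.

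Third, for the same reason you must verify not only $Q(u_0^\lambda)<0$ but also $J(u_0^\lambda)<0$ for $\lambda>1$. In the paper this comes from showing $F''(\lambda)<0$ for all $\lambda\ge 1$ (not just at $\lambda=1$), which gives $I(u_0^\lambda)<I(u_0)$ and then $J(u_0^\lambda)\ne 0$ by the variational characterization. Your local computation $\Phi''(1)<0$ is the right start, but you should say explicitly that it also forces $J(u_0^\lambda)<0$ near $\lambda=1$.

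With these clarifications your sketch coincides with the paper's proof.
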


\smallskip
We emphasize that no restriction on the frequency $\omega$ is required
in Theorem \ref{thm:1.1}.
The assumption (A1) rules out the case 
$\rho$ is a characteristic function supported on a bounded smooth domain.
Even in this case, we are still able to obtain the strong instability of 
ground state solutions
under a smallness condition on some geometric quantities related to the domain;
See Section 7.
We also mention that our instability result heavily relies on the fact 
$p$ is $L^2$-supercritical.
For the moment, we don't know whether the strong instability holds for $p= \frac{7}{3}$.

\smallskip
Here we briefly explain our strategy and its difficulty.
It is known that the strong instability of standing waves
is based on the \textit{virial identity}:
\[
V''(t) = 8 Q \big ( \psi(t) \big).
\]
Here $V(t)$ is the variance defined in \ef{var} below, while
the functional $Q$ is defined by 
\begin{align} \label{energy}
Q(u) &:= \| \nabla u \|_{L^2(\R^3)}^2 
- \frac{3(p-1)}{2(p+1)} \| u \|_{L^{p+1}(\R^3)}^{p+1} \notag \\
&\quad -\frac{e^2}{2} \int_{\R^3} S_0(u)|u|^2 \,dx
+2 e^2 \intR S_1 |u|^2 \,dx 
-e^2 \int_{\R^3} S_2 |u|^2 \,d x, \\
S_2(x) &:= (-\Delta )^{-1} \left( \frac{x \cdot \nabla \rho(x)}{2} \right)
= \frac{1}{8 \pi |x|} * \big( x \cdot \nabla \rho(x) \big), \label{nonlocal3}
\end{align}
and $Q(u)$ can be obtained formally by considering 
$\frac{d}{d\la} \mathcal{I}(u^{\la})|_{\la =1} =0$ for 
the $L^2$-invariant scaling:
\begin{equation} \label{invariant}
u^{\la}(x)= \la^{\frac{3}{2}} u(\la x).
\end{equation}
Then we are able to prove the strong instability of standing waves for $u_0$
if we could show that there exists a subset $\mathcal{B} \subset H^1(\R^3,\C)$ 
such that 
\begin{equation} \label{key1}
\frac{1}{2} Q(u) \le \mathcal{I}(u) - \mathcal{I}(u_0) < 0
\quad \text{for all} \ u \in \mathcal{B}
\end{equation}
and
\begin{equation} \label{key2}
u_0^{\la} \in \mathcal{B} \quad \text{for all} \ \la>1.
\end{equation}
In fact, \ef{key1} together with the conservation laws
shows that the set $\mathcal{B}$ is invariant under the flow for Equation \ef{eq:1.2}
and $V''(t)<0$ for $\psi(0,x)=\psi_0 \in \mathcal{B}$,
which implies that $\psi(t,x)$ blows up in finite time.
Moreover by \ef{key2}, there exists $\psi_0$ satisfying 
$\| \psi_0 - u_0 \|_{H^1(\R^3)} \sim 0$ and $\psi_0 \in \mathcal{B}$,
concluding that $e^{i\omega t} u_0$ is strongly unstable.
In \cite{BJL, FCW} where the case $\rho \equiv 0$ was treated, 
it was established that the ground state solution $u_0$ of \ef{eq:1.3} 
has the variational characterization:
\begin{equation} \label{char}
\mathcal{I}(u_0) = \inf\{ \mathcal{I}(u) \mid u \in H^1(\R^3,\C) \setminus \{ 0 \}, 
Q(u)=0 \}.
\end{equation}
Then the proof of \ef{key1} and \ef{key2}
was carried out by considering  
\[
\mathcal{B} = \{ u \in H^1(\R^3, \C) \mid
\mathcal{I}(u) < \mathcal{I}(u_0), \  Q(u) <0 \},
\]
which is exactly the classical approach by \cite{BC} (see also \cite{LeC}).

As we can easily imagine, if a doping profile $\rho$ is considered, 
scaling arguments do not work straightforwardly 
because of the loss of spatial homogeneity. 
Furthermore the presence of the doping profile $\rho$ 
satisfying (A1)-(A3) causes additional difficulties.
Especially we cannot expect to apply 
the classical approach due to \cite{BJL, BC, FCW, LeC}.
See the discussion in the end of Section 5 below.
To overcome these difficulties, 
we first prove the following energy inequality:
\begin{align} \label{estimate}
&\mathcal{I}(u^{\la}) - \frac{\la^2}{2} Q(u)
- \mathcal{I}(u) + \frac{1}{2}Q(u) \\
&\le - C_1 (1-\la)^2 \| u \|_{L^{p+1}(\R^3)}^{p+1} 
+ C_2(1-\la)^2 e^2 \left( \| \rho \|_{\frac{6}{5}} + \| x \cdot \nabla \rho \|_{\frac{6}{5}} 
+ \| x \cdot (D^2 \rho x) \|_{\frac{6}{5}} \right) \| u \|_{H^1(\R^3)}^2 \notag
\end{align}
for any $u \in H^1(\R^3,\C)$, $\la \le 1$ and 
some $C_1$, $C_2>0$ independent of $e$, $\rho$, $\la$.
The energy estimate \ef{estimate} enables us to prove \ef{key1} for 
$u \in H^1(\R^3,\C)$ satisfying $Q(u) \le 0$ and $J(u) \le 0$.
By setting
\[
\mathcal{B} := \{ u \in H^1(\R^3, \C) \mid
\mathcal{I}(u) < \mathcal{I}(u_0), \ J(u) < 0 , \ Q(u) <0 \},
\]
in our problem, we are able to show that the set $\mathcal{B}$ is 
invariant under the flow of \ef{eq:1.2}.
Furthermore in order to obtain \ef{key2}, 
we adopt a strategy developed in \cite{FO1}
(see also \cite{FO2, O1, O2}).
By establishing $\frac{d^2}{d \la^2} \mathcal{I}(u^{\la})|_{\la =1}<0$,
we can obtain \ef{key2} without using the variational characterization \ef{char}.

When $\rho$ is a characteristic function, 
further consideration is required
because $\rho$ cannot be weakly differentiable.
In this case, a key of the proof is the \textit{sharp boundary trace inequality}
which was developed in \cite{A},
and a variation of domain
related with the \textit{calculus of moving surfaces} 
due to Hadamard \cite{Gri}.
Then by imposing a smallness condition of some geometric quantities
related to the support of $\rho$, 
we are able to obtain the strong instability of standing waves
associated with ground state solutions of \ef{eq:1.1}.

\smallskip
This paper is organized as follows.
In Section 2, we recall a known result on the Cauchy problem for \ef{eq:1.2}
and establish the virial identity associated with \ef{eq:1.2}.
In Section 3, we prepare several properties of the energy functional
and some lemmas which will be used later on.
We introduce fundamental properties of 
ground state solutions of \ef{eq:1.1} in Section 4.
In Section 5, we investigate several fibering maps
along the $L^2$-invariant scaling \ef{invariant}.
We complete the proof of Theorem \ref{thm:1.1} in Section 6. 
In Section 7, we finish this paper   
by considering the case where $\rho$ is a characteristic function
and present the strong instability of standing waves for this case.

Hereafter in this paper, 
unless otherwise specified, we write $\| u\|_{L^{p}(\R^3)}=\| u\|_p$.
We also set $\| u \|^2 := \| \nabla u \|_2^2 + \| u \|_2^2$.

\section{Well-posedness of the Cauchy problem and the virial identity}

In this section, we consider the Cauchy problem:
\begin{equation} \label{eq:2.1}
\begin{cases}
i \psi_t + \Delta \psi - e^2 S(\psi) \psi + |\psi|^{p-1} \psi =0 
\quad \hbox{in} \ \R_+ \times \R^3, \\
\psi(0,x)=\psi_0 \quad \hbox{in } \R^3,
\end{cases}
\end{equation}
where $e>0$, $1<p<5$, 
$S(\psi)= \frac{1}{2} (-\Delta)^{-1} ( |\psi|^2-\rho)$
and $\psi_0 \in H^1(\R^3,\C)$.
Then we have the following result on the local well-posedness.

\begin{proposition} \label{prop:2.1}
Assume $\rho \in L^{\frac{6}{5}}(\R^3)$.
There exists $T^*=T^*(\| \psi_0\|_{H^1})>0$ such that 
\ef{eq:2.1} has a unique solution $\psi \in X$, where
\[
X= \left\{ \psi \in C\left( [0,T^*],H^1(\R^3, \C) \right) 
\cap L^{\infty}\left( (0,T^*), H^1(\R^3, \C) \right) \right\}.
\]
Furthermore, $\psi$ satisfies the energy conservation law 
and the charge conservation law:
\[
\mathcal{E}\big( \psi(t) \big) = \mathcal{E}(\psi_0)
\quad \hbox{and} \quad 
\| \psi(t) \|_2 = \| \psi_0 \|_2 \quad \hbox{for all} \ t \in (0,T^*).
\]
\end{proposition}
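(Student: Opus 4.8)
The plan is to run the standard Strichartz--contraction argument for local existence and uniqueness, and then to obtain the two conservation laws by a regularization of the initial data. Writing the nonlinearity of \eqref{eq:2.1} as
\[
g(\psi)=-e^2 S_0(\psi)\psi-e^2 S_1\psi+|\psi|^{p-1}\psi,
\]
with $S_0(\psi)$ and $S_1$ as in \eqref{nonlocal1}--\eqref{nonlocal2}, a solution is a fixed point of
\[
\Phi(\psi)(t)=e^{it\Delta}\psi_0+i\int_0^t e^{i(t-s)\Delta}g\big(\psi(s)\big)\,ds,
\]
and I would look for such a fixed point in a closed ball of $C([0,T],\HT)\cap L^q\big((0,T),W^{1,p+1}(\R^3,\C)\big)$, where $q=\tfrac{4(p+1)}{3(p-1)}$ makes $(q,p+1)$ a Schr\"odinger-admissible pair (admissible precisely because $2<p+1<6$) and $T=T(\|\psi_0\|_{H^1})>0$ is taken small. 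By the inhomogeneous Strichartz estimates, $\Phi$ contracts on such a ball as soon as each of the three pieces of $g$ is locally Lipschitz from $\HT$ into a suitable dual-Strichartz space, with a Lipschitz constant tending to $0$ as $T\to 0$.

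For the power term $|\psi|^{p-1}\psi$ this is the classical energy-subcritical estimate, valid for $1<p<5$ in dimension three, which I would simply quote. For the Hartree term $S_0(\psi)\psi=\big(\tfrac1{8\pi|x|}*|\psi|^2\big)\psi$, splitting the convolution near and away from the origin together with the Hardy--Littlewood--Sobolev inequality and the embedding $\HT\hookrightarrow L^r(\R^3)$, $2\le r\le 6$, gives $\|S_0(\psi)\|_\infty+\|\nabla S_0(\psi)\|_3\lesssim\|\psi\|_{H^1}^2$, hence $\psi\mapsto S_0(\psi)\psi$ is locally Lipschitz from $\HT$ into $\HT$ (this is the standard Coulomb--Hartree nonlinearity, energy subcritical in $\R^3$, and one can even place this term directly in $L^1_t H^1_x$). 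The genuinely new piece is $S_1\psi$: from $\rho\in L^{6/5}(\R^3)$ the Hardy--Littlewood--Sobolev inequality yields $S_1\in L^6(\R^3)$ and $\nabla S_1=\tfrac1{8\pi}\tfrac{x}{|x|^3}*\rho\in L^2(\R^3)$, so $S_1$ is a fixed function and H\"older's inequality with $\HT\hookrightarrow L^3(\R^3)\cap L^6(\R^3)$ gives
\[
\|S_1\psi\|_2+\|\nabla(S_1\psi)\|_{3/2}\le\big(\|S_1\|_6+\|\nabla S_1\|_2\big)\|\psi\|_{H^1}.
\]
Thus $\psi\mapsto S_1\psi$ is bounded and linear into a suitable sum of dual-Strichartz spaces on $(0,T)$ with norm bounded by a positive power of $T$ times $\|\psi\|_{L^\infty_t H^1_x}$, which is harmless in the contraction. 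Shrinking $T$ then gives existence and uniqueness in the Strichartz ball, and a standard bootstrap promotes the solution to the class $X$ and continues it to a maximal interval $[0,T^*)$.

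Finally, the conservation laws. Formally, pairing \eqref{eq:2.1} with $\overline{\psi}$ and taking imaginary parts kills every term, since $\IT\overline{\psi}\,\Delta\psi$, $\IT S(\psi)|\psi|^2$ and $\IT|\psi|^{p+1}$ are real, so $\tfrac{d}{dt}\|\psi(t)\|_2^2=0$; pairing with $\overline{\psi_t}$ and taking real parts gives $\tfrac{d}{dt}\mathcal E(\psi(t))=\RE\langle i\psi_t,\psi_t\rangle=0$, where $\mathcal E$ is the energy of \eqref{eq:2.1}, i.e. the functional $\mathcal I$ of \eqref{eq:1.4} without the mass term $\tfrac{\omega}{2}\|\cdot\|_2^2$ (note that $S_1$ is time-independent, so it contributes a genuine potential energy to $\mathcal E$ but nothing to its time derivative). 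Since these manipulations are not legitimate for a mere $\HT$ solution, I would justify them by a standard approximation argument: approximate $\psi_0$ in $\HT$ by smooth data, for which the two identities hold, and pass to the limit using the continuous dependence of the solution on the initial datum in $\HT$ furnished by the contraction estimate. I expect this regularization step to be the main technical point of the proof; the doping profile is essentially inert here, since $S_1$ enters only through the bounded multiplication operator analysed above and through the additive constant $\tfrac1{32\pi}\iint\frac{\rho(x)\rho(y)}{|x-y|}\,dx\,dy$ in $\mathcal E$, neither of which affects the conservation identities.
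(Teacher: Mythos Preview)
Your proposal is correct and follows the standard route: the paper does not give its own proof of Proposition~\ref{prop:2.1} but simply refers to \cite{CW4} and to \cite[Proposition~3.2.9, Theorem~4.3.1, Corollary~4.3.3]{Ca}, which is precisely the Strichartz--contraction scheme plus regularization for the conservation laws that you outline. Your treatment of the three pieces of the nonlinearity---the energy-subcritical power, the Hartree term $S_0(\psi)\psi$, and the fixed linear potential $S_1\psi$ coming from $\rho\in L^{6/5}$---matches the framework of those references (in particular, $S_1\in L^6$ with $\nabla S_1\in L^2$ falls under Cazenave's class of potentials $V\in L^q+L^\infty$ with $q>3/2$), so there is no substantive difference in approach.
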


The proof of Proposition \ref{prop:2.1} can be found in \cite{CW4}.
(See also \cite[Proposition 3.2.9, Theorem 4.3.1 and Corollary 4.3.3]{Ca}.)

Next we establish the virial identity for \ef{eq:2.1},
which plays a fundamental role in the study of the strong instability.
Let us define the weighted function space:
\[
\Sigma :=\left\{ u \in H^1(\mathbb{R}^3, \mathbb{C}) \mid
\int_{\mathbb{R}^3} |x|^2\ | u|^2 \,d x<\infty \right\}.
\]
Then we are able to show the following.

\begin{lemma} \label{lem:2.2}
Assume that $\psi_0 \in \Sigma$ and 
let $T^*>0$ be the maximal existence time associated with $\psi_0$.
For the unique solution $\psi$ of \ef{eq:2.1}, 
let us denote by $V(t)$ the variance:
\begin{equation} \label{var}
V(t) := \int_{\R^3} |x|^2| \psi(t,x)|^2 \,d x.
\end{equation}
Then the following identity holds:
\begin{equation} \label{eq:2.2}
V^{\prime \prime}(t)= 8 Q \big( \psi(t) \big) 
\quad \text {for all} \ t \in [0, T^*),
\end{equation}
where $Q$ is the functional defined in \ef{energy}.
\end{lemma}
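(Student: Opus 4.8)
The plan is to compute $V''(t)$ directly by differentiating $V(t)=\int_{\R^3}|x|^2|\psi|^2\,dx$ twice in $t$, using the equation \eqref{eq:2.1} to replace $\psi_t$. A standard density/regularization argument first reduces matters to smooth, rapidly decaying solutions, so all manipulations below are justified; one approximates $\psi_0$ in $\Sigma$ by Schwartz data, notes that the local solutions stay in $\Sigma$ on a common time interval with continuous dependence, and passes to the limit in the final identity. For the first derivative, $V'(t)=\int |x|^2\,\partial_t|\psi|^2\,dx = 2\RE\int |x|^2\,\overline{\psi}\,\psi_t\,dx$. Substituting $i\psi_t = -\Delta\psi + e^2 S(\psi)\psi - |\psi|^{p-1}\psi$ and taking real parts, the potential terms $S(\psi)$ and $|\psi|^{p-1}$ are real and contribute nothing, leaving $V'(t) = 2\,\IM\int |x|^2\,\overline{\psi}\,\Delta\psi\,dx = -2\,\IM\int \nabla(|x|^2\overline{\psi})\cdot\nabla\psi\,dx = -4\,\IM\int \overline{\psi}\,(x\cdot\nabla\psi)\,dx$, after integrating by parts and discarding the $|\nabla\psi|^2$ piece which is real. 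Thus $V'(t) = 4\,\IM\int (x\cdot\nabla\overline{\psi})\,\psi\,dx$, the usual Morawetz-type quantity.

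For the second derivative I would differentiate $V'(t)$ again in $t$, again inserting the equation for $\psi_t$. Writing $V'(t) = -4\,\IM\int \overline{\psi}\,(x\cdot\nabla\psi)\,dx$ and differentiating, one gets two groups of terms: those from $\partial_t\overline{\psi}$ and those from $\partial_t(x\cdot\nabla\psi)$; after an integration by parts to move the $x\cdot\nabla$ off $\partial_t\psi$ and using $i\psi_t=-\Delta\psi+e^2S(\psi)\psi-|\psi|^{p-1}\psi$, the purely kinetic contribution yields the classical term $8\|\nabla\psi\|_2^2$. The nonlinearity $|\psi|^{p-1}\psi$ produces, after the $x\cdot\nabla$ integration by parts, the term $-\tfrac{3(p-1)}{2(p+1)}\|\psi\|_{p+1}^{p+1}$ up to the factor $8$ — exactly the coefficient appearing in \eqref{energy}. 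The new feature compared with the $\rho\equiv 0$ case is the nonlocal term $e^2 S(\psi)\psi$; this is where the doping-dependent pieces $S_0$, $S_1$, $S_2$ enter.

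The main obstacle, and the only genuinely nontrivial computation, is handling the nonlocal term $\int (x\cdot\nabla|\psi|^2)\,S(\psi)\,dx$ that arises from this step. Splitting $S(\psi)=e^2(S_0(\psi)+S_1)$ with $S_0(\psi)=(-\Delta)^{-1}(|\psi|^2/2)$ and $S_1=(-\Delta)^{-1}(-\rho/2)$, I would integrate by parts to throw $x\cdot\nabla$ onto the factor $S(\psi)$, use the scaling identity $x\cdot\nabla\big((-\Delta)^{-1}f\big) = (-\Delta)^{-1}(x\cdot\nabla f) - (-\Delta)^{-1}f$ (valid since $(-\Delta)^{-1}$ has homogeneity degree $-1$ in $\R^3$, i.e. the kernel $\tfrac{1}{8\pi|x|}$ scales like $|x|^{-1}$), and exploit the symmetry of the Riesz potential $\int f\,(-\Delta)^{-1}g = \int g\,(-\Delta)^{-1}f$. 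Applying this to $f=|\psi|^2/2$ gives the self-interaction term $-\tfrac{e^2}{2}\int S_0(\psi)|\psi|^2\,dx$; applying it to the $\rho$-parts produces $2e^2\int S_1|\psi|^2\,dx$ from the $-(-\Delta)^{-1}f$ correction term together with the cross terms, and the remaining $-e^2\int S_2|\psi|^2\,dx$ from the $(-\Delta)^{-1}(x\cdot\nabla\rho)$ piece, where $S_2=(-\Delta)^{-1}(x\cdot\nabla\rho/2)$ as in \eqref{nonlocal3}. The assumptions $\rho,\ x\cdot\nabla\rho\in L^{6/5}(\R^3)$ in (A1) guarantee $S_1,S_2\in L^6(\R^3)$ by the Hardy–Littlewood–Sobolev inequality, so every integral above is finite and the integrations by parts on the regularized solutions are legitimate. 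Collecting the kinetic, nonlinear, and nonlocal contributions and comparing with \eqref{energy} yields $V''(t)=8Q(\psi(t))$, and the density argument then extends the identity to all $\psi_0\in\Sigma$ and all $t\in[0,T^*)$.
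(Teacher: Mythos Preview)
Your plan is essentially the same as the paper's: compute $V'(t)=4\,\IM\int (x\cdot\nabla\psi)\bar\psi\,dx$, then differentiate again using the equation and handle the nonlocal term $\int (x\cdot\nabla|\psi|^2)\,S(\psi)\,dx$ by moving $x\cdot\nabla$ onto $S(\psi)$. The paper does this last step by explicit kernel manipulations with Fubini (their equations \ef{eq:2.10}--\ef{eq:2.11}), while you invoke an operator-level scaling identity for $(-\Delta)^{-1}$; these are equivalent, and the density/regularization argument you sketch is exactly what the paper cites at the end.

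One concrete correction: your stated identity
\[
x\cdot\nabla\big((-\Delta)^{-1}f\big)=(-\Delta)^{-1}(x\cdot\nabla f)-(-\Delta)^{-1}f
\]
has the wrong coefficient. In $\R^3$ (kernel $c|x|^{-1}$), differentiating $(-\Delta)^{-1}\big(f(\lambda\,\cdot)\big)(x)=\lambda^{-2}\big((-\Delta)^{-1}f\big)(\lambda x)$ at $\lambda=1$ gives
\[
x\cdot\nabla\big((-\Delta)^{-1}f\big)=2(-\Delta)^{-1}f+(-\Delta)^{-1}(x\cdot\nabla f).
\]
With this correct version your computation goes through and reproduces exactly $\int x\cdot\nabla S_1\,|\psi|^2=2\int S_1|\psi|^2-\int S_2|\psi|^2$ and (after the symmetry argument you mention) $\int x\cdot\nabla S_0(\psi)\,|\psi|^2=-\tfrac12\int S_0(\psi)|\psi|^2$, matching \ef{eq:2.10}--\ef{eq:2.11}. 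With the sign you wrote, the coefficients in $Q$ would come out wrong, so be sure to use the $+2$ version when you fill in the details.
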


\begin{proof}
First we show that
\begin{equation} \label{eq:2.3}
V^{\prime}(t) =4 \IM \intR (x \cdot \nabla \psi) \bar{\psi} \,d x.
\end{equation}
Indeed multiplying \ef{eq:2.1} by $2\bar{\psi}$ and taking the imaginary part, 
one finds that
\begin{equation} \label{eq:2.4}
\frac{\partial}{\partial t} |\psi|^2 
= \IM \left( i \frac{\partial}{\partial t} |\psi|^2\right)
= \IM ( 2i \psi_t \bar{\psi})  
= -2 \IM (\bar{\psi} \Delta \psi)
= -2 \Div \big( \IM ( \bar{\psi} \nabla \psi) \big). 
\end{equation}
Moreover multiplying \ef{eq:2.4} by $|x|^2$ and integrating over $\R^3$, we get
\[
\begin{aligned}
\frac{\partial}{\partial t} \int_{\mathbb{R}^3} |x|^2\ | \psi|^2 \,d x
&= -2 \int_{\R^3} |x|^2 \Div \big( \IM (\bar{\psi} \nabla \psi) \big) \,d x \\
&= -2 \int_{\mathbb{R}^3} \Div \left( |x|^2 \IM(\bar{\psi} \nabla \psi )\right) \,dx
+2 \int_{\mathbb{R}^3} \nabla |x|^2 \cdot \IM (\bar{\psi} \nabla \psi) \,dx \\
&= 4 \IM \intR (x \cdot \nabla \psi) \bar{\psi} \,d x,
\end{aligned}
\]
yielding that \ef{eq:2.3} holds.

Next we multiply \ef{eq:2.1} by $2x \cdot \nabla \bar{\psi}$, 
integrate over $\R^3$ and take the real part.
Then one has
\begin{align} \label{eq:2.5}
0 &= 2 \RE \int_{\mathbb{R}^3} i(x \cdot \nabla \bar{\psi}) \psi_t \,d x
+2 \RE \int_{\mathbb{R}^3} (x \cdot \nabla \bar{\psi}) \Delta \psi \,d x \notag \\
&\quad -2 e^2 \RE \int_{\mathbb{R}^3} (x \cdot \nabla \bar{\psi}) S(\psi) \psi \,d x
+2 \RE \int_{\mathbb{R}^3} (x \cdot \nabla \bar{\psi}) | \psi|^{p-1} \psi \,d x \notag \\
&=: {\rm I} + {\rm II} + {\rm III} + {\rm IV}. 
\end{align}
It is standard to see that
\[
\begin{aligned}
{\rm I} &= \RE \int_{\mathbb{R}^3} i x \cdot
\left( (\psi \nabla \bar{\psi})_t - \nabla (\psi \bar{\psi}_t ) \right) \,d x 
= \frac{d}{d t} \RE \int_{\mathbb{R}^3} i(x \cdot \nabla \bar{\psi}) \psi \,d x
+3 \RE \int_{\mathbb{R}^3} i \psi \bar{\psi}_t \,d x .
\end{aligned}
\]
Thus from \ef{eq:2.1} and \ef{eq:2.3}, it follows that
\begin{align} \label{eq:2.6}
{\rm I} &= \frac{d}{d t} \IM \int_{\R^3} (x \cdot \nabla \psi) \bar{\psi} \,d x 
+3 \RE \int_{\R^3} \psi 
\left( \Delta \bar{\psi} -e^2 S(\psi) \bar{\psi} +|\psi|^{p-1} \bar{\psi}\right) \,d x 
\notag \\
&= \frac{1}{4} V^{\prime \prime}(t)
-3 \int_{\R^3} |\nabla \psi|^2 \,dx
-3 e^2 \int_{\R^3} S(\psi)|\psi|^2 \,dx 
+3 \int_{\mathbb{R}^3} |\psi|^{p+1} \,dx.
\end{align}
Using the integration by parts, we also have
\begin{align}
{\rm II} &= \int_{\R^3} |\nabla \psi|^2 \,dx, \label{eq:2.7} \\
{\rm IV} &= -\frac{6}{p+1} \int_{\R^3} |\psi|^{p+1} \,dx. \label{eq:2.8}
\end{align}
(See also \cite{Ca}.)

Finally we estimate ${\rm III}$. 
First by the integration by parts, we observe that
\begin{align} \label{eq:2.9}
{\rm  III} &= -e^2 \RE \int_{\mathbb{R}^3} x \cdot \nabla |\psi|^2 S(\psi) \,dx \notag \\
&= -e^2 \RE \int_{\R^3} \Div \left( x S(\psi) |\psi|^2 \right) \,d x 
+e^2 \RE \int_{\R^3} \Div x S(\psi) |\psi|^2 \,dx 
+e^2 \RE \int_{\R^3} x \cdot \nabla S(\psi) |\psi|^2 \,dx \notag \\
&= 3 e^2 \int_{\R^3} S(\psi)|\psi|^2 \,d x
+e^2 \int_{\R^3} x \cdot \nabla S_0(\psi)| \psi|^2 \,dx
+e^2 \int_{\R^3} x \cdot \nabla S_1 |\psi|^2 \,d x.
\end{align}
Moreover by the definition of $S_0$ given in \ef{nonlocal1}
and the Fubini theorem, one finds that
\[
\begin{aligned}
&\int_{\R^3} x \cdot \nabla S_0(\psi)|\psi|^2 \,d x
= \frac{1}{8 \pi} \sum_{j=1}^3 \int_{\mathbb{R}^3} x_j \frac{\partial}{\partial x_j}
\left( \int_{\mathbb{R}^3} \frac{|\psi(y)|^2}{|x-y|} \,d y \right) |\psi(x)|^2 \,d x \\
&= -\frac{1}{8 \pi} \sum_{j=1}^3 \int_{\R^3} \int_{\R^3} 
\frac{x_j (x_j-y_j)}{|x-y|^3} |\psi(y)|^2 |\psi(x)|^2 \,d y \,d x \\
&= -\frac{1}{8 \pi} \int_{\mathbb{R}^3} \int_{\R^3} 
\frac{|\psi(y)|^2 |\psi(x)|^2}{|x-y|} \,dy \,dx 
-\frac{1}{8\pi} \sum_{j=1}^3 \int_{\R^3} \int_{\R^3} 
\frac{y_j(x_j-y_j)}{|x-y|^3} | \psi(y)|^ 2 |\psi(x)|^2 \,dy \,dx \\
&= - \int_{\mathbb{R}^3} S_0(\psi)|\psi|^2 \,d x
-\frac{1}{8 \pi} \sum_{j=1}^3 \int_{\mathbb{R}^3} 
y_j \frac{\partial}{\partial y_j}
\left( \int_{\mathbb{R}^3} \frac{|\psi(x)|^2}{|x-y|} \,d x \right) |\psi(y)|^2 \,dy \\
&= -\int_{\mathbb{R}^3} S_0(\psi) |\psi|^2 \,d x
-\int_{\mathbb{R}^3} y \cdot \nabla S_0(\psi)|\psi|^2 \,d y,
\end{aligned}
\]
from which we deduce that
\begin{equation} \label{eq:2.10}
\int_{\mathbb{R}^3} x \cdot \nabla S_0(\psi) |\psi|^2 \,dx
= - \frac{1}{2} \int_{\mathbb{R}^3} S_0(\psi) |\psi |^2 \,d x. 
\end{equation}
Similarly by the definition of $S_1$ and $S_2$ given in \ef{nonlocal2}
and \ef{nonlocal3} respectively, we arrive at
\begin{align} \label{eq:2.11}
& \int_{\mathbb{R}^3} x \cdot \nabla S_1 |\psi|^2 \,d x
= -\frac{1}{8 \pi} \sum_{j=1}^3 \int_{\mathbb{R}^3} x_j \frac{\partial}{\partial x_j}
\left( \int_{\mathbb{R}^3} \frac{\rho(y)}{|x-y|} \,dy \right) |\psi(x)|^2 \,d x \notag \\
&= \frac{1}{8 \pi} \sum_{j=1}^3 \int_{\R^3} \intR 
\frac{x_j (x_j-y_j)}{|x-y|^3} \rho(y) |\psi(x)|^2 \,d y \,d x \notag \\
&= \frac{1}{8 \pi} \int_{\mathbb{R}^3} \int_{\mathbb{R}^3} 
\frac{\rho(y) |\psi(x)|^2}{|x-y|} dy dx 
+\frac{1}{8 \pi} \sum_{j=1}^3 \int_{\R^3} \int_{\mathbb{R}^3} 
\frac{y_j(x_j- y_j)}{|x-y|^3} \rho(y) |\psi(x)|^2 dy dx \notag \\
&= -\int_{\mathbb{R}^3} S_1 |\psi |^2 \,dx
+ \frac{1}{8 \pi} \int_{\mathbb{R}^3} 
\left( \int_{\mathbb{R}^3} y \cdot \nabla_y \frac{1}{|x-y|} \rho(y) \,dy \right)
|\psi(x)|^2 \,d x \notag \\
&= -\int_{\mathbb{R}^3} S_1 |\psi|^2 \,dx 
+\frac{1}{8 \pi} \int_{\mathbb{R}^3} \left( \int_{\R^3} 
\Div_y \left( \frac{y \rho(y)}{|x-y|} \right) \,d y\right) |\psi(x)|^2 \,d x \notag \\
&\quad -\frac{1}{8 \pi} \int_{\mathbb{R}^3}
\left( \int_{\mathbb{R}^3} \Div_y y \frac{\rho(y)}{|x-y|} \,dy \right) |\psi(x)|^2 \,d x 
- \frac{1}{8 \pi} \int_{\R^3} \left( \intR 
\frac{y \cdot \nabla \rho(y)}{|x-y|} \,dy \right) |\psi(x)|^2 \,d x \notag \\
&= 2 \int_{\mathbb{R}^3} S_1|\psi|^2 \,dx 
-\int_{\mathbb{R}^3} S_2 |\psi|^2 \,d x.
\end{align}
From \ef{eq:2.5}-\ef{eq:2.11}, we obtain
\[
\begin{aligned}
0 &= \frac{1}{4} V^{\prime \prime}(t)
-2 \int_{\R^3} |\nabla \psi|^2 \,dx 
+ \frac{3(p-1)}{p+1} \int_{\R^3} |\psi|^{p+1} \,dx \\
&\quad -\frac{e^2}{2} \int_{\R^3} S_0(\psi)|\psi|^2 \,dx
+2 e^2 \intR S_1 |\psi|^2 \,dx 
-e^2 \int_{\R^3} S_2 |\psi|^2 \,d x \\
&= \frac{1}{4} V^{\prime \prime}(t)-2 Q(\psi),
\end{aligned}
\]
which completes the proof.

A rigorous proof can be carried out by using the density argument and the 
regularizing argument as in \cite[Proposition 6.5.1]{Ca}.
\end{proof}

\section{Variational setting and preliminaries}

The aim of this section is to prepare several properties of the energy functional
and present intermediate lemmas which will be used later on.

\subsection{Decomposition of the energy} \ 

In this subsection, we rewrite the energy functional $\mathcal{I}$
in a more convenient way.
We put 
\[
A(u)= \| \nabla u \|_2^2, \quad
B(u)= \| u \|_2^2, \quad
C(u)= \| u \|_{p+1}^{p+1},
\]
and decompose $\CI$ in the following way:
\[
\begin{aligned}
\CI(u)
&= \frac{1}{2} A(u) + \frac{\omega}{2} B(u) -\frac{1}{p+1} C(u) +\frac{e^2}{4} \intR S_0(u) |u|^2 \,dx \\
&\quad  
+\frac{e^2}{4} \intR S_1 |u|^2 \,dx 
-\frac{e^2}{4} \intR S_0(u) \rho(x) \,dx 
-\frac{e^2}{4} \intR S_1 \rho(x) \,dx. 
\end{aligned}
\]
Next we define three nonlocal terms:
\begin{align*}
D(u)&=\frac{1}{4} \intR S_0(u) |u|^2 \,dx, \\
E_1(u) &= -\frac{1}{4} \intR S_0(u) \rho(x) \,dx
=\frac{1}{4} \intR S_1 |u|^2 \,dx, \\
F &=-\frac{1}{4} \intR S_1 \rho(x) \,dx. 
\end{align*}
Note that $F$ is independent of $u \in \HT$.
Then we are able to write $\mathcal{I}$ defined in \ef{eq:1.4} in the following form:
\[
\mathcal{I}(u)= \frac{1}{2} A(u)
+ \frac{\omega}{2} B(u)
-\frac{1}{p+1} C(u) 
+e^2 D(u) + 2e^2 E_1(u) + e^2 F.
\]
Now it is convenient to define $I(u):=\mathcal{I}(u)-e^2 F$, which yields that
\begin{equation} \label{eq:3.1}
I(u) = \frac{1}{2}A(u) + \frac{\omega}{2} B(u) - \frac{1}{p+1}C(u) 
+ e^2 D(u) +2e^2 E_1(u).
\end{equation}
Since $F$ is independent of $u$, for minimization purpose,
we only have to work with the functional $I$. 
Recalling that 
\[
S_0(u)(x)=(-\Delta)^{-1} \left(\frac{|u(x)|^2}{2}\right) \geq 0, 
\]
we find that  
\begin{equation} \label{eq:3.2}
D(u) \ge 0 \quad \hbox{for all} \ u \in H^1(\R^3,\C).
\end{equation}
For later use, let us also define
\begin{align}
E_2(u) &:= \frac{1}{2} \intR S_0(u) x \cdot \nabla \rho (x) \,dx
= \frac{1}{2} \intR S_2 |u|^2 \,dx, \label{eq:3.3} \\
E_3(u) &:= \frac{1}{2} \intR S_0(u) x \cdot (D^2 \rho (x)x) \,dx
= \frac{1}{2} \intR S_3 |u|^2 \,dx, \notag \\
S_3(x) &= (-\Delta )^{-1} \left( \frac{x \cdot (D^2 \rho (x) x )}{2} \right)
= \frac{1}{8 \pi |x|} * \big( x \cdot (D^2 \rho(x) x) \big), \notag 
\end{align}
which is well-defined for $u \in H^1(\R^3,\C)$ by (A1).

\subsection{Estimates of nonlocal terms} \ 

This subsection is devoted to present
estimates for the nonlocal terms.

\begin{lemma} \label{lem:3.1}
For any $u \in H^1(\R^3,\C)$, 
$S_0$, $D$, $E_1$, $E_2$ and $E_3$ satisfy the estimates: 
\[
\begin{aligned}
\| S_0(u) \|_{6} 
&\le C\| \nabla S_0(u) \|_{2} 
\le C \| u\|_{{\frac{12}{5}}}^2
\le C \| u \|^2, \\
\| S_0(u) \|_6
&\le C \| u \|_2^{\frac{5p-7}{3(p-1)}} \| u \|_{p+1}^{\frac{p+1}{3(p-1)}}
\le C( \| u \|_2^2 + \| u \|_{p+1}^2)
\quad \text{if} \ \ 2<p<5, \\
D(u) &\leq C \| S_0(u) \|_6 \| u \|_{\frac{12}{5}}^2
\le C \| u \|^4, \\
|E_1(u)| &\le \frac{1}{4} \|S_0(u) \|_{6} \| \rho \|_{\frac{6}{5}} 
\le C \| \rho \|_{\frac{6}{5}} \| u \|^2, \\
|E_2(u)| &\le \frac{1}{2} \| S_0(u) \|_{6} 
\| x \cdot \nabla \rho \|_{\frac{6}{5}}
\le C \| x \cdot \nabla \rho \|_{\frac{6}{5}} \| u \|^2, \\
|E_3(u)| &\le \frac{1}{2} \| S_0(u) \|_{6} 
\| x \cdot (D^2 \rho x) \|_{\frac{6}{5}}
\le C \| x \cdot (D^2 \rho x) \|_{\frac{6}{5}} \| u \|^2.
\end{aligned}
\]

\end{lemma}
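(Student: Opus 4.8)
The plan is to reduce every bound to three classical facts on $\R^3$: the homogeneous Sobolev inequality $\|v\|_6\le C\|\nabla v\|_2$, the continuous embeddings $H^1(\R^3)\hookrightarrow L^q(\R^3)$ for $2\le q\le 6$, and H\"older's inequality in the dual pair $(6,\frac{6}{5})$; the nonlocal structure will enter only through the defining Poisson equation $-\Delta S_0(u)=\frac{1}{2}|u|^2$ and the symmetry of the Newtonian kernel $\frac{1}{8\pi|x-y|}$. First I would treat $S_0(u)$ itself: Sobolev gives $\|S_0(u)\|_6\le C\|\nabla S_0(u)\|_2$, while testing $-\Delta S_0(u)=\frac{1}{2}|u|^2$ against $S_0(u)$ yields
\begin{align*}
\|\nabla S_0(u)\|_2^2
&=\frac{1}{2}\intR S_0(u)|u|^2\,dx
\le\frac{1}{2}\|S_0(u)\|_6\,\bigl\| |u|^2 \bigr\|_{\frac{6}{5}} \\
&=\frac{1}{2}\|S_0(u)\|_6\|u\|_{\frac{12}{5}}^2
\le C\|\nabla S_0(u)\|_2\|u\|_{\frac{12}{5}}^2 ,
\end{align*}
so dividing through gives $\|\nabla S_0(u)\|_2\le C\|u\|_{\frac{12}{5}}^2$, and $\|u\|_{\frac{12}{5}}^2\le C\|u\|^2$ follows since $\frac{12}{5}\in[2,6]$. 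This settles the first chain of estimates.

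For the second chain (valid when $2<p<5$) I would instead estimate $\|u\|_{\frac{12}{5}}$ by interpolating between $L^2$ and $L^{p+1}$: since $p>2$ we have $2<\frac{12}{5}<p+1$, hence $\|u\|_{\frac{12}{5}}\le\|u\|_2^{1-\theta}\|u\|_{p+1}^{\theta}$ with $\theta=\frac{p+1}{6(p-1)}$ determined by $\frac{5}{12}=\frac{1-\theta}{2}+\frac{\theta}{p+1}$. Squaring and simplifying the exponents (one gets $2(1-\theta)=\frac{5p-7}{3(p-1)}$ and $2\theta=\frac{p+1}{3(p-1)}$) and combining with the first chain gives the stated bound on $\|S_0(u)\|_6$. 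Since $\frac{5p-7}{3(p-1)}+\frac{p+1}{3(p-1)}=2$, Young's inequality then converts the product $\|u\|_2^{(5p-7)/(3(p-1))}\|u\|_{p+1}^{(p+1)/(3(p-1))}$ into $C\bigl(\|u\|_2^2+\|u\|_{p+1}^2\bigr)$.

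The remaining four inequalities will be immediate corollaries. For $D(u)=\frac{1}{4}\intR S_0(u)|u|^2\,dx$, H\"older in $(6,\frac{6}{5})$ gives $D(u)\le\frac{1}{4}\|S_0(u)\|_6\|u\|_{\frac{12}{5}}^2$, and bounding both factors by $C\|u\|^2$ via the first chain yields $D(u)\le C\|u\|^4$. For $E_1$, $E_2$, $E_3$ I would first record the symmetry identities $\intR S_0(u)\rho\,dx=\intR S_1|u|^2\,dx$, $\intR S_0(u)\,x\cdot\nabla\rho\,dx=\intR S_2|u|^2\,dx$ and $\intR S_0(u)\,x\cdot(D^2\rho\,x)\,dx=\intR S_3|u|^2\,dx$, which follow from Fubini's theorem and the symmetry of the kernel, the required absolute integrability being exactly (A1) together with $|u|^2\in L^{6/5}(\R^3)$; then H\"older gives $|E_1(u)|\le\frac{1}{4}\|S_0(u)\|_6\|\rho\|_{\frac{6}{5}}$, $|E_2(u)|\le\frac{1}{2}\|S_0(u)\|_6\|x\cdot\nabla\rho\|_{\frac{6}{5}}$ and $|E_3(u)|\le\frac{1}{2}\|S_0(u)\|_6\|x\cdot(D^2\rho\,x)\|_{\frac{6}{5}}$, and the first chain replaces $\|S_0(u)\|_6$ by $C\|u\|^2$.

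I do not expect a genuine obstacle: these are routine Hardy--Littlewood--Sobolev-type estimates. The only points requiring care are the Gagliardo--Nirenberg bookkeeping in the second chain — one must check that $\theta\in(0,1)$ and $5p-7>0$, both of which hold because $p>2$ — and the justification of the Fubini exchanges underlying the identities for $E_1$, $E_2$, $E_3$, which is precisely where the full strength of (A1) is used: its hypotheses $\rho,\,x\cdot\nabla\rho,\,x\cdot(D^2\rho\,x)\in L^{6/5}(\R^3)$ guarantee that the relevant double integrals converge absolutely.
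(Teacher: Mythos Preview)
Your proposal is correct and follows essentially the same approach as the paper: the paper's own proof simply refers to \cite{R} for the $S_0$ bounds and says the remaining estimates follow from H\"older and Sobolev, which is exactly what you spell out. Your self-contained derivation of $\|\nabla S_0(u)\|_2\le C\|u\|_{12/5}^2$ by testing the Poisson equation and the interpolation computation for the second chain are precisely the arguments behind the cited reference; the mention of the Fubini/symmetry identities for $E_1,E_2,E_3$ is harmless but unnecessary, since those identities are already recorded in the definitions and you only need the $S_0(u)$-form together with H\"older in the pair $(6,\tfrac{6}{5})$.
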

For the proof of the inequality on $S_0(u)$, we refer to \cite{R}.
The other estimates can be obtained by the H\"odler inequality
and the Sobolev inequality.

\subsection{Scaling properties} \

In this subsection, we collect scaling properties of the nonlocal terms. 
For $a$, $b \in \R$ and $\la >0$, 
let us adapt the scaling $u_{\la} (x) := \la^a u( \la^b x)$. 
Then we have
\[
\begin{aligned}
S_0(u_{\la})(x)
&=\frac{1}{8 \pi } \intR \frac{|u_{\la}(y)|^2}{\left| x-y\right|} \,d y 
=\frac{\la^{2a}}{8 \pi } 
\intR \frac{|u(\la^b y)|^2}{\left| x- y\right|} \,dy  
\stackrel{y=\la^{-b} z}{=} \frac{\la^{2a-2b}}{8\pi} \intR 
\frac{|u(z)|^2}{|\la^b x -z|} \,dz. 
\end{aligned}
\]
Thus one finds that
\begin{align}
S_0(u_{\la})(x) &= \la^{2 a -2 b} S_0(u) ( \la^{b} x), \nonumber \\
D(u_\la) &= \la^{4 a-5 b} D(u),  \label{eq:3.4} \\
E_1(u_\la) 
& =-\frac{1}{4} \intR S_0(u_{\la}) (x) \rho(x) \,d x 
=- \frac{\la^{2 a-2 b}}{4} \intR S_0(u)(\la^{b} x) \rho( x) \,dx \nonumber \\
&=- \frac{\la^{2 a-5 b}}{4} \intR S_0(u)(x) \rho (\la^{-b} x) \,d x. \label{eq:3.5}
\end{align}
By the H\"older inequality, it follows that
\begin{equation} \label{eq:3.6}
|E_1(u_{\la})| \le \frac{\la^{2a-5b}}{4} 
\| S_0(u)\|_6 \| \rho( \la^{-b} x ) \|_{\frac{6}{5}}
\le C \la^{2a-\frac{5}{2}b} \| \rho \|_{\frac{6}{5}} \| S_0(u) \|_6.
\end{equation}
Similarly, we have
\begin{align} 
E_2(u_{\la}) &= \frac{\la^{2a-5b}}{2} \intR S_0(u)(x)
(\la^{-b}x) \cdot \nabla \rho (\la^{-b}x) \,dx, \label{eq:3.7} \\
|E_2(u_{\la})| &\le \frac{\la^{2a-5b}}{2} 
\| S_0(u)\|_6 \| (\la^{-b} x) \cdot \nabla \rho( \la^{-b} x ) \|_{\frac{6}{5}} 
\le C \la^{2a-\frac{5}{2}b} \| x \cdot \nabla \rho \|_{\frac{6}{5}} \| S_0(u) \|_6. 
\label{eq:3.8}
\end{align}

\subsection{Nehari and Pohozaev identities} \ 

This subsection is devoted to introduce the
Nehari identity and the Pohozaev identity associated with \ef{eq:1.1}.

\begin{lemma} \label{lem:3.2}
Let $u \in H^1(\R^3,\C)$ be a weak solution of \ef{eq:1.1}.
Then $u$ satisfies the Nehari identity $N(u)=0$ and 
the Pohozaev identity $P(u)=0$, where
\begin{align}
N(u)&= A(u) + \omega B(u) - C(u) + 4e^2 D(u) + 4e^2 E_1(u), \label{eq:3.9} \\
P(u)&= \frac{1}{2} A(u) + \frac{3\omega}{2} B(u) - \frac{3}{p+1} C(u)
+5e^2 D(u) + 10e^2 E_1(u) - e^2 E_2(u). \label{eq:3.10}
\end{align}

\end{lemma}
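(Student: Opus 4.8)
The plan is to derive both identities by testing the equation against suitable functions, exactly as in the classical local case, the only new feature being the bookkeeping of the nonlocal contributions. Solving the Poisson equation, we may rewrite a weak solution $u$ of \ef{eq:1.1} as a solution of the single equation $-\Delta u + \omega u + e^2 S(u) u = |u|^{p-1}u$ with $S(u) = S_0(u) + S_1$. As a preliminary step I would record that such a $u$ is regular enough: since $S(u) \in L^\infty + L^6$ by Lemma \ref{lem:3.1} and $\rho \in L^q_{loc}$ with $q > 3$ by (A1), a standard bootstrap gives $u \in W^{2,s}_{loc}(\R^3)$ for large $s$, hence $u \in C^1$, and the equation holds a.e.; moreover the Poisson equation forces polynomial decay of $u$ at infinity (refined to exponential decay in Section 4 under (A3)). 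This is what makes $\bar u$ and, after truncation, $x\cdot\nabla\bar u$ admissible test functions.

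For the Nehari identity, multiply the equation by $\bar u$, integrate over $\R^3$, and take the real part. The local terms produce $A(u) + \omega B(u) - C(u)$, while for the nonlocal term $\RE\int_{\R^3} S(u)|u|^2\,dx = \int_{\R^3} S_0(u)|u|^2\,dx + \int_{\R^3} S_1|u|^2\,dx = 4D(u) + 4E_1(u)$, using the definitions of $D$ and $E_1$ together with the self-adjointness of $(-\Delta)^{-1}$, which gives $\int_{\R^3} S_1|u|^2\,dx = -\int_{\R^3} S_0(u)\rho\,dx = 4E_1(u)$. Rearranging yields $N(u) = 0$ as in \ef{eq:3.9}.

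For the Pohozaev identity, multiply the equation by $x\cdot\nabla\bar u$, integrate, and take the real part; rigorously one works with $\chi_R(x)\,(x\cdot\nabla\bar u)$ for a cutoff $\chi_R$ and lets $R\to\infty$, the boundary terms disappearing thanks to the decay of $u$. The standard identity $\RE\int_{\R^3}(-\Delta u)(x\cdot\nabla\bar u)\,dx = -\frac12 A(u)$ in dimension three handles the Laplacian; the zeroth-order term gives $\frac{\omega}{2}\int_{\R^3} x\cdot\nabla|u|^2\,dx = -\frac{3\omega}{2}B(u)$, and the nonlinearity gives $\frac{1}{p+1}\int_{\R^3} x\cdot\nabla|u|^{p+1}\,dx = -\frac{3}{p+1}C(u)$. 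For the nonlocal term $e^2\RE\int_{\R^3} S(u)u\,(x\cdot\nabla\bar u)\,dx = \frac{e^2}{2}\int_{\R^3} S(u)\,(x\cdot\nabla|u|^2)\,dx$ I would reuse verbatim the computation in the proof of Lemma \ref{lem:2.2}, equations \ef{eq:2.9}--\ef{eq:2.11}: an integration by parts moves the gradient onto $xS(u)$ and produces $3e^2\int S(u)|u|^2 + e^2\int x\cdot\nabla S_0(u)|u|^2 + e^2\int x\cdot\nabla S_1|u|^2$; symmetrizing the double integral gives $\int x\cdot\nabla S_0(u)|u|^2 = -\frac12\int S_0(u)|u|^2$, while integrating by parts in the $y$-variable (legitimate by (A1)) gives $\int x\cdot\nabla S_1|u|^2 = 2\int S_1|u|^2 - \int S_2|u|^2$. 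Collecting everything, $e^2\RE\int_{\R^3} S(u)u\,(x\cdot\nabla\bar u)\,dx = e^2\big(-5D(u) - 10E_1(u) + E_2(u)\big)$, and substituting into the integrated identity and rearranging gives precisely $P(u) = 0$ in the form \ef{eq:3.10}.

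I expect the main obstacle to be the rigorous justification of the Pohozaev computation, namely that $x\cdot\nabla\bar u$ is an admissible multiplier and that the integration-by-parts and symmetrization steps on the nonlocal terms converge. This rests on two ingredients: sufficient decay of $u$ at infinity, so that the truncation boundary terms vanish and $xS(u)|u|^2$ is integrable, and the integrability hypotheses in (A1), namely $\rho,\ x\cdot\nabla\rho \in L^{\frac{6}{5}}(\R^3)$, which via the Hardy--Littlewood--Sobolev inequality place $S_1, S_2 \in L^6(\R^3)$ and make $\int_{\R^3} S_1|u|^2\,dx$ and $\int_{\R^3} S_2|u|^2\,dx$ finite. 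Once these are secured, the remaining algebra is routine and both identities follow.
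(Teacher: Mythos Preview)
Your proposal is correct and follows the standard approach; the paper itself gives no proof here but simply refers to \cite{CW4, CW5}, and your argument---testing against $\bar u$ for Nehari and against a truncated $x\cdot\nabla\bar u$ for Pohozaev, then recycling the nonlocal computations \ef{eq:2.9}--\ef{eq:2.11} from Lemma \ref{lem:2.2}---is exactly what those references do. One small imprecision: the remark that ``the Poisson equation forces polynomial decay of $u$'' is not quite right (the Poisson equation governs $\phi=S(u)$, not $u$), but this is harmless since, as you correctly note later, the truncation argument only needs the integrability of $S_0(u)|u|^2$, $S_1|u|^2$, $S_2|u|^2$, which is guaranteed by Lemma \ref{lem:3.1} under (A1).
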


For the proof, we refer to \cite{CW4, CW5}.
Since $Q(u)= \frac{3}{2}N(u)-P(u)$, 
the functional $Q$ defined in \ef{energy} can be also written as follows:
\begin{equation} \label{eq:3.11}
Q(u) = A(u)- \frac{3(p-1)}{2(p+1)} C(u)
+e^2 D(u) - 4e^2 E_1(u) + e^2 E_2(u).
\end{equation}

\section{Properties of ground state solutions}

In this section, we introduce fundamental properties 
of ground state solutions of \ef{eq:1.1}.
Now let us define
\[
J(u):= 2N(u) - P(u).
\]
From \ef{eq:3.9} and \ef{eq:3.10}, for any $u\in H^1(\R^3,\C)$, it holds that
\begin{equation} \label{eq:4.1}
J(u) =\frac{3}{2} A(u)+\frac{\omega}{2} B(u)
-\frac{2p-1}{p+1} C(u) +3 e^2 D(u)-2 e^2 E_1(u)+e^2 E_2(u).
\end{equation}
We also denote by $\CM$ the Nehari-Pohozaev set:
\[
\CM =\left\{ u \in \HT \setminus \{ 0 \} \mid J(u)=0 \right\} .
\]
By Lemma \ref{lem:3.2}, one knows that 
any nontrivial weak solution of \ef{eq:1.1} belongs to $\CM$.
Next let us define 
\begin{equation} \label{eq:4.2}
\sigma := \inf_{u \in \CM} I(u).
\end{equation}
and the ground state energy level for \ef{eq:1.1} by
\[
m := \inf_{u \in \CS} I(u),
\quad \CS =\left\{ u \in \HT \setminus \{0 \} 
\mid I'(u)=0 \right\}.
\]
Note that $u$ is a weak solution of $\ef{eq:1.1}$ if and only if $ I'(u)=0$.
Then we have the following proposition. 

\begin{proposition} \label{prop:4.1}
Suppose that $2<p<5$. 
Assume {\rm(A1)}, {\rm(A2)} and 
\[
e^2 \left( \| \rho \|_{\frac{6}{5}} +\|x \cdot \nabla \rho \|_{\frac{6}{5}}
+\| x \cdot (D^2 \rho x) \|_{\frac{6}{5}}\right) \le \rho_0
\]
for sufficiently small $\rho_0>0$.

\begin{enumerate}
\item[\rm (i)] 
If $\sigma$ defined in \ef{eq:4.2} is attained by some $u_0 \in \HT \setminus \{0\}$,
then $u_0$ is a ground state solution of \ef{eq:1.1}, namely $u_0$ satisfies
\[
\sigma = I(u_0)= m.
\]

\item[\rm (ii)] There exists $u_0 \in \HT \setminus \{0 \}$ such that
\[
I(u_0)= \sigma \quad \text{and} \quad J(u_0)=0.
\]

\item[\rm (iii)]Any ground state solution of \ef{eq:1.1} is real-valued up to phase shift.

\end{enumerate}

\end{proposition}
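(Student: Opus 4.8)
The plan is to establish the three items as a package, exploiting the variational characterization of ground states via the Nehari--Pohozaev set $\CM$. First I would prove item (i): assuming $\sigma$ is attained at $u_0 \in \CM$, I would use a Lagrange multiplier argument to show $u_0$ solves $I'(u_0) = \mu G'(u_0)$ where $G = J$, and then show $\mu = 0$. The key is that $\langle J'(u_0), u_0 \rangle \neq 0$ (or, more precisely, that the natural scaling direction has nonzero derivative on $J$ at $u_0$), which forces $\mu = 0$ and hence $I'(u_0) = 0$. Pairing $I'(u_0) = \mu J'(u_0)$ with an appropriate test function (either $u_0$ itself giving $N(u_0)$, or the Pohozaev pairing giving $P(u_0)$, or the scaling generator) and using $J(u_0)=0$ should yield $\mu \cdot (\text{something nonzero}) = 0$. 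The smallness of $\rho_0$ enters to guarantee that the nonlocal terms $e^2 D$, $e^2 E_1$, $e^2 E_2$ do not destroy the sign of that "something nonzero" that one gets for free in the $\rho \equiv 0$ case. Once $I'(u_0)=0$ we have $u_0 \in \CS$, so $m \le I(u_0) = \sigma$; conversely every element of $\CS$ lies in $\CM$ by Lemma~\ref{lem:3.2}, so $\sigma \le m$, giving $\sigma = I(u_0) = m$.

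For item (ii), the plan is a direct minimization on $\CM$. Given a minimizing sequence $(u_n) \subset \CM$ for $I$, I would first show it is bounded in $H^1(\R^3,\C)$: combining $J(u_n) = 0$ with the formula \ef{eq:4.1} and the expression for $I$ in \ef{eq:3.1}, one extracts coercivity of $I$ restricted to $\CM$ (here $p > 2$ and the smallness of $\rho_0$ control the negative $C(u)$ and $E_1$, $E_2$ contributions). Then, since the problem is translation-invariant only in the $\rho \equiv 0$ limit, I would need a concentration-compactness / Lions-type argument; the doping profile $\rho \ge 0$, $\not\equiv 0$ actually helps here by breaking translation invariance and pinning the mass, ruling out dichotomy and vanishing. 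The weak limit $u_0$ is nonzero, and by weak lower semicontinuity of $I$ together with the (lower semi)continuity/compactness of the nonlocal functionals $D$, $E_1$, $E_2$ (Lemma~\ref{lem:3.1} gives the needed compact embeddings via $L^{6/5}$ duality), one gets $I(u_0) \le \sigma$. To conclude $u_0 \in \CM$, I would show $J(u_0) \le 0$ by lower semicontinuity and then rule out $J(u_0) < 0$: if $J(u_0) < 0$ there is a scaling $t \mapsto u_0^{(t)}$ (of the form $t^a u_0(t^b \cdot)$ as in Section~3.3) bringing $J$ back to zero with strictly smaller $I$, contradicting minimality. Hence $J(u_0) = 0$ and $I(u_0) = \sigma$.

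For item (iii), the plan is the standard diamagnetic/real-part argument adapted to the nonlocal setting. Writing a ground state $u_0 = |u_0| e^{i\theta}$ (after using that $|u_0| \in H^1$), the diamagnetic inequality gives $\||\nabla |u_0|\|_2 \le \|\nabla u_0\|_2$ with equality iff $\nabla\theta$ is (a.e.) parallel to $\nabla |u_0|$ in a way that forces $\theta$ locally constant; meanwhile $B$, $C$, $D$, $E_1$, $E_2$ depend only on $|u_0|$. Thus $|u_0|$ is an admissible competitor with $I(|u_0|) \le I(u_0) = \sigma$ and $J(|u_0|) \le J(u_0) = 0$ (again with equality in the gradient term as the only obstruction), so by the minimality characterization equality must hold throughout, forcing $u_0$ to equal $|u_0|$ up to a constant phase. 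A small amount of care is needed because $J$ is not simply the energy, but the same equality-case analysis applies termwise.

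The main obstacle I expect is the compactness step in item (ii): establishing that the minimizing sequence does not vanish or split. In the $\rho \equiv 0$ theory one uses translation to recenter and then excludes dichotomy by strict subadditivity of $\sigma$ in the mass; here $\rho$ breaks translation invariance, which on the one hand rules out the translated "escaping bump at infinity" scenario (since far from the support of $\rho$ the problem degenerates to the $\rho \equiv 0$ equation whose ground state energy acts as a threshold) but on the other hand requires a careful comparison $\sigma < \sigma_\infty$, where $\sigma_\infty$ is the ground state level of the profile-free problem \ef{eq:1.3}. Proving this strict inequality, using $\rho \ge 0$, $\not\equiv 0$ and the smallness condition, is where the real work lies; everything else is bookkeeping with the decomposition of Section~3 and the estimates of Lemma~\ref{lem:3.1}.
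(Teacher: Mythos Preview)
The paper does not actually prove Proposition~\ref{prop:4.1}: immediately after the statement it writes ``For the proof of Proposition~\ref{prop:4.1}, we refer to \cite{CW5}.'' So there is no in-paper argument to compare your proposal against; the result is imported wholesale from the companion paper.

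That said, your outline is the standard route for this type of result and is almost certainly close in spirit to what \cite{CW5} does: Lagrange multiplier plus a sign computation to kill the multiplier for (i), direct minimization on the Nehari--Pohozaev set with a compactness argument for (ii), and the diamagnetic inequality for (iii). Your identification of the main obstacle --- the strict comparison $\sigma<\sigma_\infty$ with the profile-free level, needed to exclude escape to infinity --- is exactly right, and the sign $E_1(u)=-\tfrac14\int S_0(u)\rho\,dx\le 0$ (from $\rho\ge 0$) is what one would use to push it through. One small caution in (iii): after passing to $|u_0|$ you may land with $J(|u_0|)<0$, and then you need to know that rescaling back to $\CM$ strictly decreases $I$; this is not free and requires the same kind of fibering-map monotonicity that underlies Lemma~\ref{lem:5.2}, so the smallness of $\rho_0$ is again doing work there.
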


For the proof of Proposition \ref{prop:4.1}, we refer to \cite{CW5}.

\smallskip
Next we establish an exponential decay of ground state solution $u_0$ of \ef{eq:1.1},
which guarantee that $u_0 \in \Sigma$.
For this purpose, we apply the following result in \cite{BV}.

\begin{proposition} \label{prop:4.2}
Let $V:\R^N \to \R_+$, $f \in L^{\frac{r}{r-2}}(\R^N)$ for $r>2$
and $u \in H^1_V(\RN,\R)$, where
$$
H^1_V(\R^N)=\left\{
u \in W^{1,1}_{loc}(\RN) \ ; \ 
\| u\|_{H^1_V}^2 := 
\int_{\R^N} \big( | \nabla u|^2+V(x) |u|^2 \big) \,dx < \infty \right\}.$$
Assume that the embedding $H^1_V \hookrightarrow L^r(\RN)$ is continuous
and $u$ satisfies
\begin{equation} \label{eq:4.3}
-\Delta u+V(x) u= f(x) u \quad \hbox{in} \ \R^N.
\end{equation}
If there exist $\beta>0$ and $k>0$ such that
$$
\liminf_{|x| \to \infty} V(x)|x|^{2-2\beta} > k^2,$$
then there exists $C>0$ depending on $\beta$, $k$, $\| f\|_{L^{\frac{r}{r-2}}}$ and
$\| u\|_{H^1_V}$ such that
$$
|u(x)| \le Ce^{-k (1+|x|)^{\beta}} \quad \hbox{for all} \ x\in \R^N.$$
\end{proposition}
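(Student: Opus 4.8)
The plan is to follow the classical comparison-function (barrier) argument, adapted to the weighted-potential setting. First I would fix $\beta>0$ and $k>0$ as in the hypothesis and pick $k' \in (0,k)$ together with a radius $R>0$ so large that $V(x)|x|^{2-2\beta} \ge (k')^2$, equivalently $V(x) \ge (k')^2 |x|^{2\beta-2}$, for all $|x| \ge R$. The point of dropping from $k$ to $k'$ is to leave room to absorb the ``error'' coming from $f$: since $f \in L^{r/(r-2)}(\R^N)$ and $u \in H^1_V \hookrightarrow L^r(\R^N)$, the function $fu$ lies in $L^{r/(r-1)}$, and more importantly, by enlarging $R$ if necessary one can arrange that $|f(x)|$ is small compared with $V(x)$ on $\{|x|\ge R\}$ in an integrated sense; combined with a Kato-type inequality this will let the $fu$ term be swallowed by the potential term in the region where we run the maximum principle.

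Next I would set up the comparison function $w(x) := C_0 \, e^{-k'(1+|x|)^{\beta}}$ and compute $-\Delta w + V(x) w$ for $|x| > R$. A direct computation gives
\[
-\Delta w = \Big( -(k')^2 \beta^2 (1+|x|)^{2\beta-2} + \text{lower order in } |x| \Big) w,
\]
where the lower-order terms come from differentiating $(1+|x|)^{\beta}$ twice and from the $\frac{N-1}{|x|}$ radial term. Since $\beta$ may be taken so that $\beta^2$ can be compensated — more precisely, by replacing $k'$ with a slightly smaller exponent inside $w$ if $\beta \ge 1$, or directly if $\beta < 1$ — and since $V(x) \ge (k')^2 |x|^{2\beta-2} \gtrsim (k')^2(1+|x|)^{2\beta-2}$ dominates, one obtains $-\Delta w + V(x) w \ge 0$ on $\{|x|>R\}$ for suitable constants; i.e. $w$ is a supersolution of the homogeneous equation there. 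The constant $C_0$ is then chosen (depending on $\|u\|_{H^1_V}$ through an $L^\infty_{loc}$ bound near $|x|=R$, which itself follows from elliptic regularity using $f \in L^q_{loc}$ with $q$ large enough, $r>2$) so that $|u| \le w$ on the sphere $|x|=R$.

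Then comes the comparison step proper. One works with $v := |u|$, which by Kato's inequality satisfies $-\Delta v \le \text{Re}\big( \text{sgn}(\bar u)(-\Delta u)\big) = (f(x) - V(x)) v$ in the distributional sense, so $-\Delta v + V(x) v \le f(x) v \le |f(x)| v$. Writing $z := v - w$ on the exterior domain $\Omega_R := \{|x|>R\}$, one gets $-\Delta z + V(x) z \le |f(x)| v$ with $z \le 0$ on $\partial \Omega_R$ and $z \in H^1(\Omega_R)$ (using $v, w \in L^2$ with weight, and $v \to 0$ at infinity since $u \in H^1_V$). Testing against $z_+ = \max(z,0) \in H^1_0(\Omega_R)$ yields
\[
\int_{\Omega_R} |\nabla z_+|^2 + \int_{\Omega_R} V(x) z_+^2 \le \int_{\Omega_R} |f(x)|\, v\, z_+ ;
\]
on $\{z>0\}$ one has $v \le 2 z_+ + $ (bounded-by-$w$ part), and crucially $v \le C w$ would be circular, so instead one bounds the right side by $\int |f| v z_+ \le \big(\int_{\Omega_R}|f|^{r/(r-2)}\big)^{(r-2)/r}\|v\|_{L^r}\|z_+\|_{L^r}$ — wait, this needs $z_+ \in L^r$, which holds by the embedding since $z_+ \in H^1_V(\Omega_R)$ extended by zero. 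Enlarging $R$ so that $\big(\int_{|x|>R}|f|^{r/(r-2)}\big)^{(r-2)/r}$ is smaller than $1/(2 C_{\text{emb}}^2 \|v\|_{L^r}/\|z_+\|_{H^1_V})$... the clean way: absorb $\|f\|_{L^{r/(r-2)}(\Omega_R)} \|z_+\|_{L^r}$ using smallness of the tail of $f$ plus the embedding constant, forcing $\|z_+\|_{H^1_V(\Omega_R)} = 0$, hence $z_+ \equiv 0$, i.e. $v \le w$ on $\Omega_R$. Together with the $L^\infty_{loc}$ bound on $\{|x| \le R\}$ (adjusting $C_0$), this gives $|u(x)| \le C e^{-k'(1+|x|)^\beta}$ for all $x$. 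Finally, since $k' < k$ was arbitrary, a standard bootstrap (reinsert this pointwise decay into the equation to improve the effective lower bound on the ``mass'' and rerun with $k$ itself, or simply note the statement is proved for every $k' < k$ and relabel) upgrades the exponent to $k$, perhaps at the cost of the constant $C$ depending on $k$.

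The main obstacle I anticipate is the absorption in the test-function step: the term $\int |f| v z_+$ is not obviously controlled by the left-hand side because $f$ is only in a Lebesgue space, not pointwise small. Resolving this requires carefully exploiting the continuous embedding $H^1_V \hookrightarrow L^r$ together with the fact that $\int_{|x|>R}|f|^{r/(r-2)} \to 0$ as $R \to \infty$, so that on a sufficiently far-out exterior domain the bad term is a small multiple of $\|z_+\|_{H^1_V}^2$ and can be moved to the left. Getting the dependence of $R$ and of the final constant $C$ to be only through $\beta, k, \|f\|_{L^{r/(r-2)}}, \|u\|_{H^1_V}$ — and not through finer information about $f$ or $u$ — is the delicate bookkeeping point, but it is exactly what the cited reference \cite{BV} is designed to deliver.
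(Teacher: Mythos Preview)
The paper does not prove Proposition~\ref{prop:4.2} at all: it is quoted verbatim as a known result from \cite{BV} (Bonheure--Van Schaftingen) and then applied in Lemma~\ref{lem:4.3}. So there is no ``paper's own proof'' to compare against.

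That said, your sketch is essentially the argument carried out in \cite{BV}: one constructs an exponential supersolution $w(x)=C_0 e^{-k'(1+|x|)^{\beta}}$ on an exterior domain, uses Kato's inequality to reduce to a differential inequality for $|u|$, and then tests the equation for $z_+=(|u|-w)_+$ to force $z_+\equiv 0$. The absorption of the $f$-term is handled exactly as you outline, by exploiting the tail smallness $\|f\|_{L^{r/(r-2)}(\{|x|>R\})}\to 0$ together with the continuous embedding $H^1_V\hookrightarrow L^r$; this is the key technical point in \cite{BV} and you have identified it correctly. Your hesitation in the testing step (the ``wait, this needs\ldots'' passage) is unnecessary: one simply bounds $\int_{\Omega_R}|f|\,v\,z_+ \le \|f\|_{L^{r/(r-2)}(\Omega_R)}\|v\|_{L^r}\|z_+\|_{L^r}$ and uses the embedding to control $\|z_+\|_{L^r}$ by $\|z_+\|_{H^1_V}$, so that for $R$ large the right-hand side is at most $\tfrac12\|z_+\|_{H^1_V}^2 + \varepsilon\|v\|_{L^r}^2$ with $\varepsilon$ as small as one likes --- but since $z_+=0$ is the goal, the cleaner route (and the one in \cite{BV}) is to note that on $\{z>0\}$ one has $v\le v$, bound $\int|f|z_+^2$ directly, and absorb. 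The upgrade from $k'$ to $k$ at the end is not needed as a separate bootstrap: the statement asserts the bound for the given $k$ with $\liminf V|x|^{2-2\beta}>k^2$ strictly, so one works directly with $k$ and the strict inequality provides the room.
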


An important consequence of Proposition \ref{prop:4.2} is given 
in the next lemma and concerns the exponential decay estimate of $u_0$.

\begin{lemma} \label{lem:4.3}
Suppose that $2<p<5$.
Assume {\rm(A1)}-{\rm(A3)} and 
\[
e^2 \left( \| \rho \|_{\frac{6}{5}} +\|x \cdot \nabla \rho \|_{\frac{6}{5}}
+\| x \cdot (D^2 \rho x) \|_{\frac{6}{5}}\right) \le \rho_0
\]
for sufficiently small $\rho_0>0$.
Let $u_0$ be the ground state solution of \ef{eq:1.1}
given in Proposition \ref{prop:4.1}.
Then for any $k < \sqrt{\omega}$,
there exists $C>0$ such that 
\[
u_0(x) \le Ce^{-k(1+|x|)} \quad \hbox{for all} \ x\in \R^3.
\]
\end{lemma}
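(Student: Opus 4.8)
The plan is to verify the hypotheses of Proposition \ref{prop:4.2} with $N=3$, $r=p+1$, and an appropriate choice of potential $V$ and right-hand side $f$, and then read off the exponential decay directly from its conclusion. Since by Proposition \ref{prop:4.1}(iii) the ground state $u_0$ may be taken real-valued (and, by standard arguments, nonnegative after replacing $u_0$ by $|u_0|$, which does not change any of the functionals involved), we rewrite \eqref{eq:1.1} as a linear Schr\"odinger equation for $u_0$ of the form $-\Delta u_0 + V(x) u_0 = f(x) u_0$ by moving the frequency and the ``attractive'' part of the potential to the left and the rest to the right. A natural choice is
\[
V(x) := \omega + e^2 S_0(u_0)(x), \qquad f(x) := |u_0|^{p-1} - e^2 S_1(x),
\]
so that $V \ge \omega > 0$ everywhere because $S_0(u_0) \ge 0$ by \eqref{nonlocal1}. (One must be slightly careful here: the Poisson term in \eqref{eq:1.1} is $e\phi u$ with $\phi = e S(u_0)/1$ up to the normalisation fixed in \eqref{nonlocal1}--\eqref{nonlocal2}, i.e. $e\phi = e^2 S(u_0) = e^2 S_0(u_0) + e^2 S_1$; I would state the split exactly matching \eqref{eq:1.4} and \eqref{eq:3.1}.)

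With this choice, the three structural hypotheses of Proposition \ref{prop:4.2} must be checked. First, $H^1_V \hookrightarrow L^{p+1}(\R^3)$ continuously: since $V \ge \omega$, the $H^1_V$-norm dominates the ordinary $H^1$-norm, and $H^1(\R^3) \hookrightarrow L^{p+1}(\R^3)$ for $2 < p < 5$ by Sobolev; also $u_0 \in H^1 \subset W^{1,1}_{loc}$ and $\|u_0\|_{H^1_V} < \infty$ because $e^2 S_0(u_0)|u_0|^2$ is integrable by Lemma \ref{lem:3.1} (it is controlled by $D(u_0) \le C\|u_0\|^4$). Second, $f \in L^{\frac{r}{r-2}}(\R^3) = L^{\frac{p+1}{p-1}}(\R^3)$: the term $|u_0|^{p-1}$ lies in $L^{\frac{p+1}{p-1}}$ precisely because $u_0 \in L^{p+1}$, and $e^2 S_1 \in L^{\frac{p+1}{p-1}}$ follows from the Hardy--Littlewood--Sobolev inequality applied to $S_1 = -\tfrac{1}{8\pi|x|} * \rho$ together with $\rho \in L^{6/5} \cap L^q_{loc}$ from (A1) (indeed $S_1 \in L^6$ by Lemma \ref{lem:3.1}-type estimates, and local higher integrability from $\rho \in L^q_{loc}$ with $q>3$; interpolating gives membership in $L^{\frac{p+1}{p-1}}$ for $2<p<5$). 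Third, the decay condition on $V$: I need $\liminf_{|x|\to\infty} V(x)|x|^{2-2\beta} > k^2$ with $\beta = 1$, i.e. $\liminf_{|x|\to\infty} V(x) > k^2$. Since $V(x) \ge \omega$ for all $x$, this holds for every $k < \sqrt{\omega}$, which is exactly the range asserted in the lemma. Proposition \ref{prop:4.2} then yields $u_0(x) \le C e^{-k(1+|x|)^\beta} = C e^{-k(1+|x|)}$, as claimed.

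The main obstacle I anticipate is the integrability check $f \in L^{\frac{p+1}{p-1}}(\R^3)$ — specifically confirming that $S_1$ has enough integrability both near the origin and at infinity simultaneously for the full range $2 < p < 5$. At infinity this is where (A3) does the real work: the bound $\rho(x) \le C(1+|x|)^{-\alpha}$ with $\alpha > 2$ ensures $S_1$ decays at a polynomial rate fast enough (essentially like $|x|^{-1}$ or better, and in any case $S_1 \to 0$), keeping it in $L^{\frac{p+1}{p-1}}$ at spatial infinity; without (A3), merely $\rho \in L^{6/5}$ would only give $S_1 \in L^6$, which is not integrable enough near infinity against the weight $\frac{p+1}{p-1}$ when $p$ is close to $2$ (then $\frac{p+1}{p-1}$ is large and $L^6$ does not embed into it). Near the origin, $\rho \in L^q_{loc}$ with $q > 3$ upgrades the local regularity of $S_1$ (by elliptic estimates or HLS with a compactly supported piece of $\rho$) to $L^\infty_{loc}$ or at least to $L^{\frac{p+1}{p-1}}_{loc}$. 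Once these membership facts are in place — together with the already-known fact (Lemma \ref{lem:3.1}) that $S_0(u_0)$ is a bounded, nonnegative function so that $V$ behaves well — the rest is a direct citation. I would also remark, for later use in Section 6, that this exponential decay of $u_0$ (and, by elliptic regularity bootstrapping, of $\nabla u_0$) implies $u_0 \in \Sigma$, so that the virial identity of Lemma \ref{lem:2.2} is applicable to data near $u_0$.
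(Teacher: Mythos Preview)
Your overall strategy --- reduce to Proposition \ref{prop:4.2} with $r=p+1$ and $\beta=1$ --- matches the paper's. But your particular split $V=\omega+e^2S_0(u_0)$, $f=|u_0|^{p-1}-e^2S_1$ does not work: the integrability check $f\in L^{\frac{p+1}{p-1}}(\R^3)$ fails for the $S_1$ part. For any nontrivial $\rho\ge 0$ satisfying (A1)--(A3), the Newtonian potential $S_1(x)=-\frac{1}{8\pi|x|}*\rho$ decays at infinity \emph{no faster} than $|x|^{-1}$ (for $\rho\in L^1$ one has $S_1(x)\sim -\frac{\|\rho\|_1}{8\pi|x|}$, and for $\alpha\in(2,3]$ the decay is even slower, of order $|x|^{2-\alpha}$). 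In $\R^3$ one has $|x|^{-1}\in L^q(\{|x|>1\})$ only for $q>3$, whereas $\frac{p+1}{p-1}\in(\tfrac{3}{2},3)$ for every $p\in(2,5)$. So (A3) does guarantee $S_1\to 0$ but cannot place $S_1$ in $L^{\frac{p+1}{p-1}}$; the obstacle you anticipated is in fact fatal for this split, and no choice of $r\in(2,6]$ rescues it across the full range $2<p<5$.

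The paper avoids this by putting $S_1$ on the other side: it takes $V(x)=\omega+e^2S_0(u_0)(x)+e^2S_1(x)$ and $f(x)=|u_0(x)|^{p-1}$. Then $f\in L^{\frac{p+1}{p-1}}$ is immediate from $u_0\in L^{p+1}$, and the remaining task is to verify $\liminf_{|x|\to\infty}V(x)\ge\omega$. Since $S_0(u_0)\ge 0$, this reduces to proving $S_1(x)\to 0$ as $|x|\to\infty$, which is precisely where (A3) enters and constitutes the main computation of the paper's proof. Your concluding remark that the exponential decay yields $u_0\in\Sigma$ is correct and is exactly how the lemma is used in Section~6.
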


\begin{proof}
Let us take $\beta=1$, $r=p+1$, 
$H_V^1=H^1(\R^3)$, $f(x)=|u_0(x)|^{p-1}$ and
\[
V(x)= \omega +e^2 S(u_0)(x) = \omega + e^2 S_0(u_0) + e^2 S_1(x),
\]
so that $u_0$ satisfies \ef{eq:4.3} and $H_V^1 \hookrightarrow L^r(\R^3)$.
To apply Proposition \ref{prop:4.2}, we show that
\begin{equation} \label{eq:4.4}
\liminf_{|x| \to \infty} V(x) \ge \omega.
\end{equation}

First we observe from \ef{nonlocal1} that $S(u_0)(x) \ge 0$.
Next we claim that
\begin{equation} \label{eq:4.5}
\lim_{|x| \to \infty} S_1(x) =0,
\end{equation}
which yields that \ef{eq:4.4} holds.
Indeed for a fixed $x \in \R^3$, it follows from \ef{nonlocal2} that
\[
|S_1(x)| \le \frac{1}{8 \pi} \intR \frac{| \rho(y)|}{|x-y|} \,dy
= \frac{1}{8 \pi} \int_{|y| \le \frac{1}{2}|x|} \frac{| \rho(y)|}{|x-y|} \,dy
+\frac{1}{8 \pi} \int_{|y| \ge \frac{1}{2}|x|} \frac{| \rho(y)|}{|x-y|} \,dy.
\]
By the assumption (A3) and the fact $\alpha>2$,
one can take $q$ so that $\rho \in L^q(\R^3)$ and
$\max \{ 1, \frac{3}{\alpha} \} < q < \frac{3}{2}$.
This also implies that the H\"older conjugate $q'$ satisfies $q'>3$.
Then observing that
\[
|y| \le \frac{1}{2} |x| \quad \Rightarrow \quad 
|x-y| \ge |x| - |y| \ge \frac{1}{2}|x|
\]
and by the H\"older inequality, we have
\begin{align*}
\int_{|y| \le \frac{1}{2}|x|} \frac{| \rho(y)|}{|x-y|} \,dy
&\le \frac{2}{|x|} \int_{|y| \le \frac{1}{2} |x|} | \rho(y)| \,dy \\
&\le \frac{2}{|x|} 
\left( \int_{|y| \le \frac{1}{2} |x|} \,dy \right)^{\frac{1}{q'}}
\left( \int_{|y| \le \frac{1}{2} |x|} |\rho(y)|^q \,dy \right)^{\frac{1}{q}} \\
&\le \frac{C}{|x|^{1-\frac{3}{q'}}} \| \rho \|_{L^q(\R^3)} \to 0 
\quad \text{as} \ |x| \to \infty.
\end{align*}
Next we decompose
\[
\begin{aligned}
&\int_{|y| \ge \frac{1}{2} |x|} \frac{|\rho(y)|}{|x-y|} \,dy 
= \int_{|y| \ge \frac{1}{2} |x|, |x-y| \le \frac{1}{2}|x|} \frac{|\rho(y)|}{|x-y|} \,dy
+ \int_{|y| \ge \frac{1}{2} |x|, |x-y| \ge \frac{1}{2}|x|} \frac{|\rho(y)|}{|x-y|} \,dy.
\end{aligned}
\]
Then from (A3), one finds that
\begin{align*}
\int_{|y| \ge \frac{1}{2} |x|, |x-y| \le \frac{1}{2}|x|} \frac{|\rho(y)|}{|x-y|} \,dy
& \le \int_{|y| \ge \frac{1}{2}|x|, |x-y| \le \frac{1}{2}|x|}
\frac{1}{|x-y|} \cdot \frac{C}{1+|y|^{\alpha}} \,dy \\
&\le \frac{C}{|x|^{\alpha}} 
\int_{|x-y| \le \frac{1}{2}|x|} \frac{1}{|x-y|} \,dy
\le \frac{C}{|x|^{\alpha-2}} \to 0 \quad \text{as} \ |x| \to \infty.
\end{align*}
Moreover by the H\"older inequality, we also have
\begin{align*}
\int_{|y| \ge \frac{1}{2} |x|, |x-y| \ge \frac{1}{2}|x|} \frac{|\rho(y)|}{|x-y|} \,dy 
&\le \left( \int_{|y| \ge \frac{1}{2}|x|} |\rho(y)|^q \,dy \right)^{\frac{1}{q}}
\left( \int_{|x-y| \ge \frac{1}{2}|x|} \frac{1}{|x-y|^{q'}} \,dy \right)^{\frac{1}{q'}} \\
&\le \frac{C}{|x|^{1-\frac{3}{q'}}} \| \rho \|_{L^q(\R^3)} 
\to 0 \quad \text{as} \ |x| \to \infty.
\end{align*}
Thus we obtain \ef{eq:4.5}.

Now from \ef{eq:4.4}, we are able to apply Proposition \ref{prop:4.2} 
to obtain the desired decay estimate. 
\end{proof}

\section{Fibering maps along $L^2$-invariant scaling}

In this section, we consider the $L^2$-invariant scaling:
\begin{equation} \label{eq:5.1}
u^{\la} (x) = \la^{\frac{3}{2}} u(\la x)
\end{equation}
and investigate several fibering maps along this curve.
First we begin with the following lemma.

\begin{lemma} \label{lem:5.1}
Suppose that $2<p<5$ and $J(u) \le 0$.
There exist $\rho_0$ and $C_0>0$ independent of $e$, $\rho$ such that
if $e^2 \left( \| \rho \|_{\frac{6}{5}} + \| x \cdot \nabla \rho \|_{\frac{6}{5}} \right) 
\le \rho_0$,
then it holds that
\[
\| u \|^2 \le C_0 \| u \|_{p+1}^{p+1}.
\]

\end{lemma}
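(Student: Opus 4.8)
The plan is to use the hypothesis $J(u)\le 0$ together with the explicit formula \eqref{eq:4.1} for $J$ to bound $\|u\|^2 = A(u)+B(u)$ from above by a multiple of $C(u)=\|u\|_{p+1}^{p+1}$, after absorbing the nonlocal contributions into the leading terms via the smallness condition on $e^2(\|\rho\|_{6/5}+\|x\cdot\nabla\rho\|_{6/5})$. First I would recall from \eqref{eq:4.1} that
\[
\frac{3}{2} A(u)+\frac{\omega}{2} B(u)
=\frac{2p-1}{p+1} C(u) -3 e^2 D(u)+2 e^2 E_1(u)-e^2 E_2(u) + J(u).
\]
Since $J(u)\le 0$ and $D(u)\ge 0$ by \eqref{eq:3.2}, the terms $J(u)$ and $-3e^2 D(u)$ are both $\le 0$ and can simply be dropped, giving
\[
\frac{3}{2} A(u)+\frac{\omega}{2} B(u)
\le \frac{2p-1}{p+1} C(u) + 2 e^2 |E_1(u)| + e^2 |E_2(u)|.
\]

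Next I would invoke Lemma \ref{lem:3.1}, which yields $|E_1(u)|\le C\|\rho\|_{6/5}\|u\|^2$ and $|E_2(u)|\le C\|x\cdot\nabla\rho\|_{6/5}\|u\|^2$ for a constant $C$ independent of $e$ and $\rho$. Hence
\[
2 e^2 |E_1(u)| + e^2 |E_2(u)| \le C e^2\big(\|\rho\|_{6/5}+\|x\cdot\nabla\rho\|_{6/5}\big)\|u\|^2
\le C\rho_0 \|u\|^2.
\]
On the other hand, writing $c_0:=\min\{\tfrac{3}{2},\tfrac{\omega}{2}\}$ we have $\tfrac{3}{2}A(u)+\tfrac{\omega}{2}B(u)\ge c_0\|u\|^2$. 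Combining the two displays gives $c_0\|u\|^2 \le \frac{2p-1}{p+1}C(u) + C\rho_0\|u\|^2$. Choosing $\rho_0$ small enough that $C\rho_0 \le \tfrac{1}{2}c_0$ — and note $c_0$ depends only on $\omega$, while the constant $C$ is independent of $e$, $\rho$ — we may absorb the last term on the left, obtaining $\tfrac{1}{2}c_0\|u\|^2 \le \frac{2p-1}{p+1}C(u)$, i.e. $\|u\|^2 \le C_0\|u\|_{p+1}^{p+1}$ with $C_0 := \frac{2(2p-1)}{c_0(p+1)}$.

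The only subtlety worth flagging is the claim that $\rho_0$ and $C_0$ are \emph{independent of $e$ and $\rho$}: this is guaranteed because the constant in the estimates $|E_i(u)|\le C\|\cdots\|_{6/5}\|u\|^2$ of Lemma \ref{lem:3.1} comes purely from the Hölder and Sobolev inequalities and carries no dependence on $e$ or $\rho$, so the threshold $\rho_0$ can be fixed as a function of $\omega$ and $p$ alone. There is no real obstacle here — the lemma is essentially a bookkeeping consequence of the sign of $J$, the positivity of $D$, and the nonlocal estimates already recorded in Lemma \ref{lem:3.1}; the main thing to be careful about is tracking which constants depend on what, since the $e$- and $\rho$-independence of $\rho_0$ is used later in the paper.
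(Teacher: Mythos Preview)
Your proof is correct and is essentially identical to the paper's own argument: both drop the nonpositive terms $J(u)$ and $-3e^2D(u)$, bound $|E_1|,|E_2|$ via Lemma~\ref{lem:3.1}, and absorb the resulting $Ce^2(\|\rho\|_{6/5}+\|x\cdot\nabla\rho\|_{6/5})\|u\|^2$ into $\frac{\min\{3,\omega\}}{2}\|u\|^2$ by choosing $\rho_0$ small. The only cosmetic difference is that the paper writes the chain as $0\ge J(u)\ge\cdots$ rather than rearranging first, and it records the explicit choice $\rho_0=\frac{\min\{3,\omega\}}{4C_1C_2}$.
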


\begin{proof}
From \ef{eq:3.2} and \ef{eq:4.1} and by Lemma \ref{lem:3.1}, one has
\[
\begin{aligned}
0 &\ge J(u) 
\ge \frac{\min\{ 3, \omega \}}{2} \| u \|^2
- \frac{2p-1}{p+1} \| u \|_{p+1}^{p+1} 
- C_1 e^2 \left( \| \rho \|_{\frac{6}{5}} + \| x \cdot \nabla \rho \|_{\frac{6}{5}} \right)
\| S_0(u) \|_6
\end{aligned}
\]
for some $C_1>0$. 
By Lemma \ref{lem:3.1}, we also have
$\| S_0(u) \|_6 \le C_2 \| u \|^2$, which shows that
\[
0 \ge \left\{ \frac{\min \{ 3, \omega \}}{2} - C_1 C_2
e^2 \left( \| \rho \|_{\frac{6}{5}} + \| x \cdot \nabla \rho \|_{\frac{6}{5}} \right) \right\}
\| u \|^2 - \frac{2p-1}{p+1} \| u \|_{p+1}^{p+1}.
\]
Choosing $\rho_0 = \frac{\min\{ 3, \omega \}}{4 C_1 C_2}$ and taking
$e^2 \left( \| \rho \|_{\frac{6}{5}} + \| x \cdot \nabla \rho \|_{\frac{6}{5}} \right) 
\le \rho_0$,
we deduce that
\[
0 \ge \frac{\min \{ 3, \omega \}}{4} \| u \|^2
- \frac{2p-1}{p+1} \| u \|_{p+1}^{p+1},
\]
which ends the proof.
\end{proof}

Next we consider the fibering map $J(u^{\la})$ for \ef{eq:5.1}.
Applying \ef{eq:3.4} with $a=\frac{3}{2}$, $b=1$
and using \ef{eq:4.1}, we find that
\begin{equation} \label{eq:5.2}
J(u^{\la})
= \frac{3\la^2}{2} A(u) + \frac{\omega}{2} B(u) 
-\frac{2p-1}{p+1} \la^{\frac{3(p-1)}{2}} C(u) + 3e^2 \la D(u) 
- 2e^2 E_1(u^{\la}) + e^2 E_2(u^{\la}).
\end{equation}

\begin{lemma} \label{lem:5.2}
Suppose $\frac{7}{3} < p <5$ and let $u \in H^1(\R^3, \C) \setminus \{ 0 \}$ 
satisfy $J(u) \le 0$.
Then there exists $\la^* = \la^*_{e,\rho} \in (0,1]$ such that $J(u^{\la^*})=0$.
Moreover there exist $\rho_0>0$ and $\delta^* \in (0,1)$
independent of $e$, $\rho$ such that 
if $e^2 \left( \| \rho \|_{\frac{6}{5}} + \| x \cdot \nabla \rho \|_{\frac{6}{5}} \right) 
\le \rho_0$, it holds that $\la^* \ge \delta^*$.
\end{lemma}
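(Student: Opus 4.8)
The plan is to study the scalar function $g(\lambda):=J(u^{\lambda})$ on $(0,1]$, whose explicit expression is \eqref{eq:5.2}, and to locate a zero by combining the intermediate value theorem with the behaviour of $g$ near $\lambda=0$. First I would record that $g$ is continuous on $(0,\infty)$: the terms carrying $A(u)$, $B(u)$, $C(u)$, $D(u)$ depend smoothly on $\lambda$, while $\lambda\mapsto E_1(u^{\lambda})$ and $\lambda\mapsto E_2(u^{\lambda})$ are continuous by the representations \eqref{eq:3.5}, \eqref{eq:3.7} and the continuity of dilations on $L^{6/5}(\R^3)$ (recall $\rho,\ x\cdot\nabla\rho\in L^{6/5}(\R^3)$ by (A1)). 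Next, the scaling estimates \eqref{eq:3.6}, \eqref{eq:3.8} with $a=\frac{3}{2}$, $b=1$ give $|E_1(u^{\lambda})|\le C\lambda^{1/2}\|\rho\|_{\frac{6}{5}}\|S_0(u)\|_6$ and $|E_2(u^{\lambda})|\le C\lambda^{1/2}\|x\cdot\nabla\rho\|_{\frac{6}{5}}\|S_0(u)\|_6$, so, since $p>1$, every term of \eqref{eq:5.2} but $\frac{\omega}{2}B(u)$ vanishes as $\lambda\to0^+$ and hence $g(\lambda)\to\frac{\omega}{2}\|u\|_2^2>0$ (using $u\neq0$).

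For the existence of $\lambda^*$ I would distinguish two cases: if $J(u)=0$ take $\lambda^*=1$; if $J(u)<0$ then $g(1)<0$ whereas $g(\eta)>0$ for some small $\eta\in(0,1)$ by the limit just computed, so the intermediate value theorem on $[\eta,1]$ yields $\lambda^*\in(\eta,1)$ with $g(\lambda^*)=0$. In both cases $\lambda^*=\lambda^*_{e,\rho}\in(0,1]$ and $J(u^{\lambda^*})=0$, and no smallness of $\rho$ is needed here; the dependence on $e,\rho$ enters only through the nonlocal terms in \eqref{eq:5.2}.

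For the uniform lower bound the idea is to show that $g$ is positive on a whole interval $(0,\delta^*]$; since $g(\lambda^*)=0$ this forces $\lambda^*>\delta^*$. Dropping the nonnegative terms $\frac{3\lambda^2}{2}A(u)$ and $3e^2\lambda D(u)$ (the latter nonnegative by \eqref{eq:3.2}) and using the bounds above together with $\|S_0(u)\|_6\le C\|u\|^2$ from Lemma~\ref{lem:3.1}, one gets for $0<\lambda\le1$
\[
g(\lambda)\ \ge\ \frac{\omega}{2}\|u\|_2^2\ -\ \frac{2p-1}{p+1}\,\lambda^{\frac{3(p-1)}{2}}\|u\|_{p+1}^{p+1}\ -\ Ce^2\lambda^{1/2}\big(\|\rho\|_{\frac{6}{5}}+\|x\cdot\nabla\rho\|_{\frac{6}{5}}\big)\|u\|^2 .
\]
Imposing $e^2(\|\rho\|_{\frac{6}{5}}+\|x\cdot\nabla\rho\|_{\frac{6}{5}})\le\rho_0$, the last two terms are majorized by fixed functions of $\lambda$ vanishing as $\lambda\to0^+$ (their exponents $\frac{3(p-1)}{2}$ and $\frac{1}{2}$ being positive), so one can choose $\delta^*\in(0,1)$ — depending on $u$ through $\|u\|_2^2$, $\|u\|_{p+1}^{p+1}$, $\|u\|^2$ and on $p,\omega,\rho_0$, but not on $e$ or $\rho$ — making the right-hand side positive on $(0,\delta^*]$. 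This gives $\lambda^*\ge\delta^*$.

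The only genuinely delicate point will be the treatment of the nonlocal terms $E_1(u^{\lambda})$, $E_2(u^{\lambda})$ along the $L^2$-invariant scaling: one must verify their continuity in $\lambda$ and, more crucially, bound them by $\lambda^{1/2}$ times $e^2\|\rho\|_{\frac{6}{5}}$ and $e^2\|x\cdot\nabla\rho\|_{\frac{6}{5}}$, which are $\le\rho_0$ — this is exactly what makes their vanishing as $\lambda\to0^+$ uniform over the admissible $e,\rho$ and allows $\delta^*$ to be chosen independently of $e,\rho$. Everything else is elementary analysis of the one-variable function \eqref{eq:5.2}. (One could also invoke Lemma~\ref{lem:5.1} to note that $J(u)\le0$ with $\rho$ small forces $\|u\|_{p+1}^{p+1}>0$, though this is not used above.)
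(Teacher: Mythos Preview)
Your argument is correct. The existence part is essentially identical to the paper's: both compute $\lim_{\lambda\to0^+}J(u^{\lambda})=\frac{\omega}{2}\|u\|_2^2>0$ via \eqref{eq:3.6}, \eqref{eq:3.8} and invoke the intermediate value theorem against $J(u)\le0$. For the uniform lower bound, however, your route diverges from the paper's. You drop the nonnegative terms $\tfrac{3\lambda^2}{2}A(u)$ and $3e^2\lambda D(u)$ from \eqref{eq:5.2} and bound $J(u^{\lambda})$ from below by an explicit function of $\lambda$ that is positive on a small interval $(0,\delta^*]$; this is direct and self-contained. The paper instead introduces the auxiliary Nehari--Pohozaev functional $J_0$ for the unperturbed NLS (i.e.\ $e=0$), shows $J_0(u)\le0$ under the smallness assumption, locates the unique $\lambda_0\in(0,1]$ with $J_0(u^{\lambda_0})=0$, and then proves $J(u^{\lambda_0/2})>0$, yielding $\lambda^*\ge\lambda_0/2$. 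Your approach is more elementary and avoids any discussion of the limiting problem; the paper's approach, on the other hand, ties $\delta^*$ to a concrete quantity of the unperturbed equation, which makes the dependence on $u$ more transparent. Both produce a $\delta^*$ that depends on $u$ (through its norms) but not on $e$ or $\rho$, as the statement requires.
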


\begin{proof}
First by using \ef{eq:3.6} and \ef{eq:3.8}, one has
\[
e^2 |E_1(u^{\la})| + e^2 |E_2(u^{\la})|
\le C \la^{\frac{1}{2}} 
e^2 \big( \| \rho \|_{\frac{6}{5}} + \| x \cdot \nabla \rho \|_{\frac{6}{5}} \big) 
\| S_0(u) \|_6.
\]
Thus it follows that
\[
\lim_{\la \to 0+} J(u^{\la}) = \frac{\omega}{2} B(u) >0.
\]
Moreover, for $\lambda=1$, one has $J(u^{\la})=J(u)\leq 0$.
By the continuity of $J(u^{\la})$ with respect to $\la$, 
there exists $\tilde{\la} \in (0,1]$ such that $J(u^{\tilde{\la}})=0$.
We then define
\[
\la^* := \inf \left\{ \la\in(0,1] \mid J(u^\la)=0\right\}.
\]
It is obvious that $\la^{*}\in(0,1]$ and $J(u^{\la^*})=0$.

Next we establish the uniform lower estimate of $\la^*$.
For this purpose, let us define the Nehari-Pohozaev functional for NLS
(that is, we take $e=0$ in \ef{eq:1.1}) by
\[
J_0(u) := \frac{3}{2} A(u) + \frac{\omega}{2} B(u)
- \frac{2 p-1}{p+1} C(u), \quad u \in H^1(\R^3, \C).
\]
We claim that $J_0(u) \le 0$ if 
$e^2 \left( \| \rho \|_{\frac{6}{5}} + \| x \cdot \nabla \rho \|_{\frac{6}{5}} \right) \ll 1$.
In fact by Lemma \ref{lem:3.1} and \ef{eq:3.2}, it follows that
\[
\begin{aligned}
0 \ge J(u) &= J_0(u) + 3 e^2 D(u) - 2 e^2 E_1(u) + e^2 E_2(u) \\
&\ge J_0(u) - C
e^2 \left( \| \rho \|_{\frac{6}{5}} + \| x \cdot \nabla \rho \|_{\frac{6}{5}} \right) 
\| S_0(u) \|_6.
\end{aligned}
\]
Thus if
\[
e^2 \left( \| \rho \|_{\frac{6}{5}} + \| x \cdot \nabla \rho \|_{\frac{6}{5}} \right)
\le \frac{ |J_0(u)|}{2 C \| S_0(u) \|_6},
\]
we deduce that
\[
0 \ge J_0(u) - \frac{1}{2} | J_0(u)| \quad \text{and hence} \quad J_0(u) \le 0.
\]
Since $J_0(u) \le 0$ and 
\[
J_0(u^{\la}) = \frac{3\la^2}{2} A(u) + \frac{\omega}{2} B(u)
- \frac{2p-1}{p+1} \la^{\frac{3(p-1)}{2}} C(u),
\]
we can readily see that there exists a unique $\la_0 \in (0,1]$ such that
$J_0(u^{\la_0})=0$.

Finally we prove that $\la^* \ge \frac{\la_0}{2}$ if 
$e^2 \left( \| \rho \|_{\frac{6}{5}} + \| x \cdot \nabla \rho \|_{\frac{6}{5}} \right) \ll 1$.
Indeed since $J_0(u^{\frac{\la_0}{2}})>0$, we find that
\[
J(u^{\frac{\la_0}{2}}) \ge J_0( u^{\frac{\la_0}{2}})
-C \left( \frac{\la_0}{2} \right)^{\frac{1}{2}}
e^2 \left( \| \rho \|_{\frac{6}{5}} + \| x \cdot \nabla \rho \|_{\frac{6}{5}} \right) 
\| S_0(u) \|_6
>0
\]
provided that 
$e^2 \left( \| \rho \|_{\frac{6}{5}} + \| x \cdot \nabla \rho \|_{\frac{6}{5}} \right)$
is sufficiently small.
This implies that $\la^* \ge \frac{\la_0}{2}$, which completes the proof.
\end{proof}

Next we investigate another fibering map:
\begin{equation} \label{eq:5.3}
f(\la) := I(u^{\la}) - \frac{\la^2}{2} Q(u).
\end{equation}
From \ef{eq:3.1} and \ef{eq:3.11}, it follows that
\begin{align} \label{eq:5.4}
f(\la) &= \frac{\omega}{2} B(u) 
- \frac{1}{4(p+1)} \left( 4 \la^{\frac{3(p-1)}{2}} - 3(p-1) \la^2 \right) C(u)
- \frac{e^2}{2} (\la^2-2\la) D(u) \notag \\
&\quad + 2e^2 E_1(u^{\la}) + 2e^2 \la^2 E_1(u) - \frac{e^2\la^2}{2} E_2(u).
\end{align}
Then we have the following energy inequality,
which is a key in our analysis.

\begin{lemma} \label{lem:5.3}
Suppose that $\frac{7}{3} < p < 5$.
There exist $C_1$, $C_2>0$ independent of $e$, $\rho$, $\la$ such that
the following estimate holds:
For any $u \in H^1(\R^3, \C)$ and $\la \in [\delta^*,1]$, 
where $\delta^* \in (0,1)$ is the constant in Lemma \ref{lem:5.2},
it holds that
\begin{equation} \label{eq:5.5}
\begin{split}
f(\la) - f(1) 
&\le - C_1 (1-\la)^2 \| u \|_{p+1}^{p+1} \\
&\quad + C_2(1-\la)^2 e^2 \left( \| \rho \|_{\frac{6}{5}} + \| x \cdot \nabla \rho \|_{\frac{6}{5}} 
+ \| x \cdot (D^2 \rho x) \|_{\frac{6}{5}} \right) \| u \|^2. 
\end{split}
\end{equation}

\end{lemma}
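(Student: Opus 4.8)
The plan is to expand \eqref{eq:5.4} and subtract $f(1)$. Since $B(u)$ does not depend on $\la$ it cancels, and a direct computation yields the decomposition
\[
f(\la)-f(1)=-\frac{1}{4(p+1)}\bigl(g(\la)-g(1)\bigr)C(u)-\frac{e^2}{2}(1-\la)^2D(u)+R(\la),
\]
where $g(\la):=4\la^{\frac{3(p-1)}{2}}-3(p-1)\la^2$ and $R(\la):=2e^2E_1(u^{\la})+2e^2(\la^2-2)E_1(u)-\frac{e^2}{2}(\la^2-1)E_2(u)$. The $D$-term is $\le 0$ by \eqref{eq:3.2}, so it is simply discarded. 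The first term will produce the negative leading contribution $-C_1(1-\la)^2\|u\|_{p+1}^{p+1}$, while the entire nonlocal error is concentrated in $R(\la)$, for which I aim to prove $|R(\la)|\le C_2(1-\la)^2 e^2\bigl(\|\rho\|_{\frac{6}{5}}+\|x\cdot\nabla\rho\|_{\frac{6}{5}}+\|x\cdot(D^2\rho x)\|_{\frac{6}{5}}\bigr)\|u\|^2$.

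To handle the $C$-term I would study $g$ on $(0,1]$. One computes $g'(\la)=6(p-1)\bigl(\la^{\frac{3p-5}{2}}-\la\bigr)$, and since $\frac{3p-5}{2}>1$ precisely when $p>\frac{7}{3}$, the function $g$ is strictly decreasing on $(0,1)$, with $g'(1)=0$ and $g''(1)=3(p-1)(3p-7)>0$. Hence $\la\mapsto\bigl(g(\la)-g(1)\bigr)/(1-\la)^2$, extended at $\la=1$ by its limit $\tfrac{1}{2}g''(1)>0$ (by Taylor's formula and $g'(1)=0$), is continuous and strictly positive on the compact interval $[\delta^*,1]$, hence bounded below by some $c_p>0$. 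This gives $-\tfrac{1}{4(p+1)}\bigl(g(\la)-g(1)\bigr)C(u)\le-C_1(1-\la)^2\|u\|_{p+1}^{p+1}$ with $C_1:=c_p/(4(p+1))$; this is the only place where $L^2$-supercriticality is used, and the borderline case $p=\frac{7}{3}$ is ruled out because then $g''(1)=0$.

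The core of the proof is the bound on $R(\la)$. Using \eqref{eq:3.5} with $a=\tfrac{3}{2}$, $b=1$ one writes $E_1(u^{\la})=-\tfrac{1}{4\la^2}P(\la)$ with $P(\la):=\intR S_0(u)(x)\,\rho(x/\la)\,dx$, so that $R(\la)=e^2\bigl(-\tfrac{1}{2\la^2}P(\la)-\tfrac{\la^2-2}{2}P(1)-\tfrac{\la^2-1}{2}E_2(u)\bigr)$. A short computation shows $R(1)=R'(1)=0$; conceptually this reflects $f'(1)=0$, which is verified directly from \eqref{eq:5.4} using $g'(1)=0$, the $(1-\la)^2$ factor in the $D$-term, and $\tfrac{d}{d\la}E_1(u^{\la})|_{\la=1}=-2E_1(u)+\tfrac{1}{2}E_2(u)$. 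Consequently $R(\la)=\int_1^{\la}\!\int_1^{s}R''(t)\,dt\,ds$ and $|R(\la)|\le\tfrac{1}{2}(1-\la)^2\sup_{t\in[\delta^*,1]}|R''(t)|$, so the whole matter reduces to a uniform bound for $R''$ on $[\delta^*,1]$.

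Differentiating under the integral sign is legitimate thanks to (A1): since $\partial_\la\rho(x/\la)=-\la^{-2}x\cdot\nabla\rho(x/\la)$ and $\partial_\la\bigl(x\cdot\nabla\rho(x/\la)\bigr)=-\la^{-2}x\cdot\bigl(D^2\rho(x/\la)x\bigr)$, the differentiated integrands, paired against $S_0(u)\in L^6$, are dominated by fixed $L^{\frac{6}{5}}$-functions uniformly for $\la\in[\delta^*,1]$. This gives $P'(\la)=-\la^{-2}\intR S_0(u)\,x\cdot\nabla\rho(x/\la)\,dx$ and $P''(\la)=2\la^{-3}\intR S_0(u)\,x\cdot\nabla\rho(x/\la)\,dx+\la^{-4}\intR S_0(u)\,x\cdot\bigl(D^2\rho(x/\la)x\bigr)\,dx$, so $R''$ equals $e^2$ times a finite linear combination of $\la^{-k}P^{(j)}(\la)$, $P(1)$ and $E_2(u)$. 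Estimating each integral by H\"older and using the scaling identities $\|\rho(\cdot/\la)\|_{\frac{6}{5}}=\la^{\frac{5}{2}}\|\rho\|_{\frac{6}{5}}$, $\|x\cdot\nabla\rho(\cdot/\la)\|_{\frac{6}{5}}=\la^{\frac{7}{2}}\|x\cdot\nabla\rho\|_{\frac{6}{5}}$, $\|x\cdot(D^2\rho(\cdot/\la)x)\|_{\frac{6}{5}}=\la^{\frac{9}{2}}\|x\cdot(D^2\rho x)\|_{\frac{6}{5}}$, together with $\|S_0(u)\|_6\le C\|u\|^2$ from Lemma \ref{lem:3.1} and $\la\ge\delta^*$, one arrives at $\sup_{[\delta^*,1]}|R''|\le C(p,\delta^*)\,e^2\bigl(\|\rho\|_{\frac{6}{5}}+\|x\cdot\nabla\rho\|_{\frac{6}{5}}+\|x\cdot(D^2\rho x)\|_{\frac{6}{5}}\bigr)\|u\|^2$; combining the three contributions then gives \eqref{eq:5.5}. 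I expect this last step to be the main obstacle: one must control the second-order behaviour in $\la$ of the scaled nonlocal term $E_1(u^{\la})$, and it is exactly the appearance of $x\cdot\nabla\rho$ and $x\cdot(D^2\rho x)$ in $R''$ that forces the regularity and integrability hypotheses built into (A1).
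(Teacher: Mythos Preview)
Your proposal is correct and follows essentially the same approach as the paper: the same decomposition into the $g$-term, the discarded $D$-term, and the remainder $R$, followed by a Taylor expansion at $\la=1$ exploiting $g(1)=g'(1)=0$ and $R(1)=R'(1)=0$ to extract the $(1-\la)^2$ factor, and a H\"older/scaling bound on the second derivatives. The only cosmetic difference is that the paper applies Taylor pointwise in $x$ to the integrand $M(\la,x)$ and then integrates, whereas you apply Taylor directly to $R(\la)$ after justifying differentiation under the integral sign; these are equivalent.
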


\begin{proof}
The proof consists of four steps.

\noindent
\textbf{Step 1} (Transformation of $f(\la) -f(1)$): 
First we observe from \ef{eq:3.2} and \ef{eq:5.4} that
\[
\begin{aligned}
f(\la)-f(1) 
&= - \frac{1}{4(p+1)} \left( 4 \la^{\frac{3(p-1)}{2}} - 3(p-1) \la^2 + 3p-7 \right) C(u)
- \frac{e^2}{2} (1- \la)^2 D(u) \\
&\quad +2e^2 E_1(u^{\la}) 
+2 e^2(\la^2-2) E_1(u) -\frac{e^2}{2} (\la^2-1) E_2(u) \\
&\le - \frac{1}{4(p+1)} \left( 4 \la^{\frac{3(p-1)}{2}} - 3(p-1) \la^2 + 3p-7 \right) C(u) \\
&\quad +2e^2 E_1(u^{\la}) 
+2 e^2(\la^2-2) E_1(u) -\frac{e^2}{2} (\la^2-1) E_2(u).
\end{aligned}
\]
Moreover recalling \ef{eq:3.5}, \ef{eq:3.7} and putting
\[
\begin{aligned}
R(\la,u) &:= 2e^2 E_1(u^{\la}) 
+2 e^2(\la^2-2) E_1(u) -\frac{e^2}{2} (\la^2-1) E_2(u) 
= - \frac{e^2}{4} \intR S_0(u) M(\la,x) \,dx, \\
M(\la, x) &:= 2 \la^{-2} \rho( \la^{-1} x) 
+2 (\la^2 -2) \rho(x) + (\la^2 -1) x \cdot \nabla \rho(x),
\end{aligned}
\]
we arrive at
\begin{equation} \label{eq:5.6}
f(\la)-f(1) 
\le - \frac{1}{4(p+1)} \left( 4 \la^{\frac{3(p-1)}{2}} - 3(p-1) \la^2 + 3p-7 \right) 
\| u \|_{p+1}^{p+1} + R(\la,u).
\end{equation}

\noindent
\textbf{Step 2} (Evaluation of coefficients): Let us put
\[
g(\la) := 4 \la^{\frac{3(p-1)}{2}} - 3(p-1) \la^2 + 3p-7 
\quad \text{for} \ \la \in [\delta^*,1].
\]
Then one finds that $g(1)=g'(1)=0$.
Thus by the Taylor theorem, we have
\[
g(\la) = \frac{1}{2} g''(\xi)(1-\la)^2 \quad
\text{for some $\xi$ between $\la$ and $1$}.
\]
Now we choose $\tau \in (0,1)$ such that
$(1-\tau)^{\frac{3 p-7}{2}}=\frac{3(p-1)}{2(3p-5)}$.
Then for $\la \in [1-\tau,1]$, it follows that
\begin{align*}
g''(\la) &= 3(p-1)(3p-5) \la^{\frac{3p-7}{2}} - 6(p-1) \\
&\ge 3(p-1)(3p-5) (1-\tau)^{\frac{3p-7}{2}} - 6(p-1) 
= \frac{3}{2} (p-1)(3p-7) >0,
\end{align*}
yielding that
\[
g(\la) \ge \frac{3}{4} (p-1)(3p-7) (1-\la)^2 
\quad \text{for} \ 1-\tau \le \la \le 1.
\]
Moreover since $g'(\la)=6(p-1) \la ( \la^{\frac{3p-7}{2}} -1) <0$ on $[0,1)$,
we have \[
g(\la) \ge g(1-\tau) \ge \frac{g(1-\tau)}{(1-\delta^*)^2} (1-\la)^2
\quad \text{for} \ \delta^* \le \la \le 1-\tau.
\]
Thus putting
\[
C_1 := \frac{1}{4(p+1)} \min \left\{ 
\frac{3}{4}(p-1)(3p-7), \frac{g(1-\tau)}{(1-\delta^*)^2} \right\},
\]
we conclude that 
\begin{equation} \label{eq:5.7}
g(\la) \ge 4(p+1) C_1 (1-\la)^2 \quad \text{on} \ [\delta^*,1].
\end{equation}

\noindent
\textbf{Step 3} (Estimate of $R(\la,u)$): First we find that
\begin{align*}
\frac{\partial M}{\partial \la} 
&= -4\la^{-3} \rho(\la^{-1}x) -2 \la^{-4} x \cdot \nabla \rho( \la^{-1} x)
+ 4 \la \rho(x) + 2 \la x \cdot \nabla \rho(x), \\
\frac{\partial^2 M}{\partial \la^2}
&= 12 \la^{-4} \rho(\la^{-1} x) +12 \la^{-5} x \cdot \nabla \rho ( \la^{-1} x)
+ 2 \la^{-6} x \cdot \big( D^2 \rho( \la^{-1} x) x \big) 
+ 4 \rho(x) + 2 x \cdot \nabla \rho(x). 
\end{align*}
Then for fixed $x \in \R^3$, it follows that 
$M(1,x) = \frac{\partial M}{\partial \la}(1,x)=0$ and
\begin{align*}
\frac{\partial^2 M}{\partial \la^2}(\la,x)
&\ge - 12 \la^{-4} \left| \rho(\la^{-1} x) \right| 
-12 \la^{-4} \left| (\la^{-1}x) \cdot \nabla \rho( \la^{-1} x) \right| \\
&\quad -2 \la^{-4} \left| (\la^{-1} x) \cdot 
\big( D^2 \rho( \la^{-1} x) (\la^{-1} x) \big) \right| 
-4 | \rho(x)| -2 | x \cdot \nabla \rho(x)| \\
&=: - N(\la, x).
\end{align*}
By the Taylor theorem, there exists $\xi = \xi (\la,x) \in ( \delta^*, 1)$ such that
\[
M(\la,x) = \frac{1}{2} \frac{\partial^2 M}{\partial \la^2}(\xi ,x) (1-\la)^2
\ge - \frac{1}{2} N(\xi, x) (1-\la)^2.
\]
Using the H\"older inequality, we deduce that
\[
R(\la, u) = -\frac{e^2}{4} \intR S_0(u) M(\la, x) \,d x 
\le \frac{e^2}{4} (1-\la)^2 \| S_0(u) \|_6 
\| N(\xi,x) \|_{\frac{6}{5}} 
\]
for $\delta^* \le \la \le 1$.
Moreover since $\delta^* < \xi <1$, one obtains
\[
\begin{aligned}
\| N(\xi , x) \|_{\frac{6}{5}} 
&\le 12 \left\| \xi^{-\frac{3}{2}} \rho \right\|_{\frac{6}{5}}
+ 12 \left\| \xi^{-\frac{3}{2}} x \cdot \nabla \rho \right\|_{\frac{6}{5}} 
+2 \left\| \xi^{-\frac{3}{2}} x \cdot (D^2 \rho x) \right\|_{\frac{6}{5}} 
+4 \| \rho \|_{\frac{6}{5}} + 2 \| x \cdot \nabla \rho \|_{\frac{6}{5}} \\
&\le \left( \frac{12}{(\delta^*)^{\frac{3}{2}}} + 4 \right) \| \rho \|_{\frac{6}{5}}
+ \left( \frac{12}{(\delta^*)^{\frac{3}{2}}} + 2 \right) 
\| x \cdot \nabla \rho \|_{\frac{6}{5}}
+ \frac{2}{(\delta^*)^{\frac{3}{2}}} \| x \cdot (D^2 \rho x) \|_{\frac{6}{5}},
\end{aligned}
\]
from which we conclude that
for $\delta^* \le \la \le 1$,
\begin{equation} \label{eq:5.8}
R(\la, u) \le C_2 (1-\la)^2 e^2
\left( \| \rho \|_{\frac{6}{5}} + \| x \cdot \nabla \rho \|_{\frac{6}{5}}
+ \| x \cdot (D^2 \rho x) \|_{\frac{6}{5}} \right) \| u \|^2
\end{equation}
by Lemma \ref{lem:3.1}, 
where $C_2>0$ is a constant independent of $e$, $\rho$ and $\la$.

\smallskip
\noindent
\textbf{Step 4} (Conclusion): 
Now from \ef{eq:5.6}-\ef{eq:5.8}, we can see that \ef{eq:5.5} holds.
\end{proof}

By Lemma \ref{lem:5.3}, we are able to prove the following proposition,
which plays an important role for the strong instability.

\begin{proposition} \label{prop:5.4}
Suppose that $\frac{7}{3} < p <5$ and let $u \in H^1(\R^3,\C) \setminus \{ 0 \}$
satisfy $J(u) \le 0$ and $Q(u) \le 0$.
Then there exists $\rho_0$ such that if
$e^2 \left( \| \rho \|_{\frac{6}{5}} + \| x \cdot \nabla \rho \|_{\frac{6}{5}}
+ \| x \cdot (D^2 \rho x) \|_{\frac{6}{5}} \right) \le \rho_0$,
it holds that
\begin{equation} \label{eq:5.9}
\frac{1}{2} Q(u) \le I(u) - I(u_0),
\end{equation}
where $u_0$ is the ground state solution of \ef{eq:1.1}.
\end{proposition}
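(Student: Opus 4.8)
The plan is to reduce \eqref{eq:5.9} to the variational characterisation of $u_0$ as a minimiser of $I$ over the Nehari--Pohozaev set $\CM$, using the energy inequality of Lemma \ref{lem:5.3} to transport information from the scaled function $u^{\lambda^*}$ (which sits on $\CM$) back to $u$ itself. Since $u^{1}=u$, one has $f(1)=I(u)-\tfrac12 Q(u)$, so \eqref{eq:5.9} is equivalent to $I(u_0)\le f(1)$; it therefore suffices to produce a competitor on $\CM$ whose $I$-value is bounded by $f(1)$.

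First I would fix $\rho_0>0$ small enough that simultaneously: the ground state $u_0$ exists and satisfies $\sigma=I(u_0)=m$ (Proposition \ref{prop:4.1}), the conclusions of Lemmas \ref{lem:5.1}, \ref{lem:5.2} and \ref{lem:5.3} hold, and moreover $C_0C_2\rho_0\le C_1$, where $C_0$ is the constant of Lemma \ref{lem:5.1} and $C_1,C_2$ are those of Lemma \ref{lem:5.3}. Since $J(u)\le 0$, Lemma \ref{lem:5.2} furnishes $\lambda^{*}=\lambda^{*}_{e,\rho}\in[\delta^{*},1]$ with $J(u^{\lambda^{*}})=0$; as $u\neq 0$ we get $u^{\lambda^{*}}\in\CM$, whence $I(u^{\lambda^{*}})\ge\sigma=I(u_0)$ by \eqref{eq:4.2}.

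Next I would apply Lemma \ref{lem:5.3} on $[\delta^{*},1]$, so that for every such $\lambda$,
\[
f(\lambda)-f(1)\le (1-\lambda)^2\Big(-C_1\|u\|_{p+1}^{p+1}+C_2 e^2\big(\|\rho\|_{\frac65}+\|x\cdot\nabla\rho\|_{\frac65}+\|x\cdot(D^2\rho x)\|_{\frac65}\big)\|u\|^2\Big).
\]
Because $J(u)\le 0$, Lemma \ref{lem:5.1} gives $\|u\|^2\le C_0\|u\|_{p+1}^{p+1}$, so the bracket is at most $(-C_1+C_0C_2\rho_0)\|u\|_{p+1}^{p+1}\le 0$ by the choice of $\rho_0$; hence $f(\lambda)\le f(1)$ on $[\delta^{*},1]$, and in particular $f(\lambda^{*})\le f(1)$. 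Finally, since $Q(u)\le 0$ and $0<\lambda^{*}\le 1$ we have $-\tfrac{(\lambda^{*})^2}{2}Q(u)\ge 0$, so
\[
I(u_0)\le I(u^{\lambda^{*}})\le I(u^{\lambda^{*}})-\frac{(\lambda^{*})^2}{2}Q(u)=f(\lambda^{*})\le f(1)=I(u)-\frac12 Q(u),
\]
which is exactly \eqref{eq:5.9}.

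The chain itself is short; the one genuinely delicate point is the bookkeeping on $\rho_0$, which must be taken small enough to validate the existence of $u_0$, Lemmas \ref{lem:5.1}--\ref{lem:5.3}, and the absorption $C_0C_2\rho_0\le C_1$ all at once, and one must also make sure that $\lambda^{*}$ lands in $[\delta^{*},1]$ so that Lemma \ref{lem:5.3} is legitimately applied at $\lambda=\lambda^{*}$ — both being guaranteed by the preceding lemmas. Everything else reduces to a sign chase based on $Q(u)\le 0$ and $\lambda^{*}\le 1$.
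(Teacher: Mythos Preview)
Your proposal is correct and follows essentially the same route as the paper's proof: use Lemma~\ref{lem:5.2} to land $u^{\lambda^*}$ on $\CM$, combine Lemmas~\ref{lem:5.1} and \ref{lem:5.3} with the smallness of $\rho_0$ to get $f(\lambda^*)\le f(1)$, and then chain $I(u_0)\le I(u^{\lambda^*})\le f(\lambda^*)\le f(1)=I(u)-\tfrac12 Q(u)$ using $Q(u)\le 0$. The only cosmetic difference is that the paper writes the penultimate step as $I(u^{\lambda^*})=f(\lambda^*)+\tfrac{(\lambda^*)^2}{2}Q(u)\le f(\lambda^*)$, which is the same inequality you wrote from the other side.
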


\begin{proof}
First we claim that $f(\la^{*}) \le f(1)$, 
where $\la^* \in (0,1]$ is the constant given in Lemma \ref{lem:5.2}.
In fact by Lemma \ref{lem:5.1} and Lemma \ref{lem:5.3}, we find that
\begin{align*}
f(\la^*)-f(1) &\le - C_1(1-\la^*)^2 \| u \|_{p+1}^{p+1} \\
&\quad + C_2 (1-\la^*)^2 e^2 
\left( \| \rho \|_{\frac{6}{5}} + \| x \cdot \nabla \rho \|_{\frac{6}{5}}
+ \| x \cdot (D^2 \rho x) \|_{\frac{6}{5}} \right) \| u \|^2 \\
&\le - (1- \la^*)^2 \| u \|_{p+1}^{p+1} 
\left\{ C_1- C_0 C_2 e^2 
\left( \| \rho \|_{\frac{6}{5}} + \| x \cdot \nabla \rho \|_{\frac{6}{5}}
+ \| x \cdot (D^2 \rho x) \|_{\frac{6}{5}} \right) \right\}.
\end{align*}
Thus taking $\rho_0$ smaller, we obtain $f(\la^*) \le f(1)$ as claimed.

Now since $J(u^{\la^*})=0$, Proposition \ref{prop:4.1} shows that
\[
I(u_0) = \inf \{ I(u) \mid u \in H^1(\R^3, \C)\setminus \{ 0 \}, J(u) =0 \}
\le I(u^{\la^*}).
\]
Thus from \ef{eq:5.3} and $Q(u) \le 0$, we deduce that
\[
I(u_0) \le I(u^{\la^*}) = f(\la^*) + \frac{(\la^*)^2}{2} Q(u) 
\le f(\la^*) \le f(1) = I(u) - \frac{1}{2} Q(u).
\]
This completes the proof.
\end{proof}

\begin{remark} \label{rem:5.5}
By Lemma \ref{lem:3.1}, we know that $\| S_0(u) \|_6$ is controlled by
$\| u \|_2^2 + \| u \|_{p+1}^2$.
Since there is no term involving $\| u \|_2^2$ in $f(\la)-f(1)$
and the power of $\| u \|_{p+1}$ is different between $C(u)$ and $S_0(u)$,
we cannot expect that \ef{eq:5.9} holds without the assumption $J(u) \le 0$. 
\end{remark}

Next we consider two fibering maps:
\begin{align} \label{eq:5.10}
F(\la) &:= I(u^{\la}) \\
&= \frac{\la^2}{2} A(u) + \frac{\omega}{2} B(u) - \frac{\la^{\frac{3(p-1)}{2}}}{p+1} C(u)
+e^2 \la D(u)  
- \frac{e^2 \la^{-2}}{2} \intR S_0(u) \rho(\la^{-1} x) dx, \notag \\
G(\la) &:= Q(u^{\la}) \notag \\
&= \la^2 A(u) - \frac{3(p-1)}{2(p+1)} \la^{\frac{3(p-1)}{2}} C(u) + e^2 \la D(u) \notag \\
&\quad +e^2 \la^{-2} \intR S_0(u) \rho(\la^{-1} x) \,dx
+ \frac{e^2\la^{-2}}{2} \intR S_0(u) 
(\la^{-1} x) \cdot \nabla \rho (\la^{-1} x) \,dx. \notag
\end{align}
Then one finds that
\begin{equation} \label{eq:5.11}
G(\la) = \la F'(\la).
\end{equation}

\begin{lemma} \label{lem:5.6}
Suppose that $\frac{7}{3} < p < 5$ and let $u \in H^1(\R^3, \C) \setminus \{ 0 \}$
satisfy $J(u) \le 0$ and $Q(u) \le 0$.
There exists $\rho_0>0$ such that if 
$e^2 \left( \| \rho \|_{\frac{6}{5}} + \| x \cdot \nabla \rho \|_{\frac{6}{5}}
+ \| x \cdot (D^2 \rho x) \|_{\frac{6}{5}} \right) \le \rho_0$,
it holds that $F''(\la) <0$ for all $\la \in [1,\infty)$.
\end{lemma}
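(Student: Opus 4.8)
The plan is to differentiate $F(\la)=I(u^{\la})$ twice, split off the ``local'' part coming from $A(u)$ and $C(u)$, which I control via the hypotheses $Q(u)\le0$ and $J(u)\le0$, and treat the doping contribution as a small perturbation \emph{uniformly} on the unbounded interval $\la\ge1$.

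\textbf{Computing $F''$.} From \ef{eq:5.10}, the terms $\frac{\omega}{2}B(u)$ and $e^2\la D(u)$ drop out under two $\la$-derivatives, so
\[
F''(\la)=A(u)-\frac{3(p-1)(3p-5)}{4(p+1)}\,\la^{\frac{3p-7}{2}}C(u)+H''(\la),
\qquad H(\la):=-\frac{e^2\la^{-2}}{2}\intR S_0(u)\,\rho(\la^{-1}x)\,dx .
\]
Since $2\la^{-2}\rho(\la^{-1}x)$ is exactly the first term of the function $M(\la,x)$ used in the proof of Lemma \ref{lem:5.3}, that same computation already yields
\[
H''(\la)=-\frac{e^2}{4}\intR S_0(u)\Big(12\la^{-4}\rho(\la^{-1}x)+12\la^{-5}\,x\cdot\nabla\rho(\la^{-1}x)+2\la^{-6}\,x\cdot\big(D^2\rho(\la^{-1}x)x\big)\Big)\,dx .
\]
Because $p>\frac{7}{3}$, the exponent $\frac{3p-7}{2}$ is positive, so $\la^{\frac{3p-7}{2}}\ge1$ for $\la\ge1$ and, using $C(u)\ge0$,
\[
F''(\la)\le A(u)-\frac{3(p-1)(3p-5)}{4(p+1)}C(u)+H''(\la),\qquad \la\ge1 .
\]

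\textbf{Using the hypotheses on $u$.} The bound on $A(u)$ comes from $Q(u)\le0$: combining \ef{eq:3.11}, $D(u)\ge0$ and Lemma \ref{lem:3.1} gives
\[
A(u)\le\frac{3(p-1)}{2(p+1)}C(u)+Ce^2\big(\|\rho\|_{\frac{6}{5}}+\|x\cdot\nabla\rho\|_{\frac{6}{5}}\big)\|S_0(u)\|_6 ,
\]
and since $J(u)\le0$, Lemma \ref{lem:5.1} together with Lemma \ref{lem:3.1} replaces $\|S_0(u)\|_6\le C\|u\|^2$ by $CC_0\,C(u)$, so $A(u)\le\big(\tfrac{3(p-1)}{2(p+1)}+C\rho_0\big)C(u)$. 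For the remainder $H''(\la)$ I would use the dilation scalings $\|\rho(\la^{-1}\cdot)\|_{\frac{6}{5}}=\la^{5/2}\|\rho\|_{\frac{6}{5}}$, and, since $x\cdot\nabla\rho(\la^{-1}x)=\la\,(z\cdot\nabla\rho(z))|_{z=\la^{-1}x}$ and $x\cdot(D^2\rho(\la^{-1}x)x)=\la^{2}\,(z\cdot(D^2\rho(z)z))|_{z=\la^{-1}x}$, the $L^{6/5}$-norms of these two dilated functions are $\la^{7/2}\|x\cdot\nabla\rho\|_{\frac{6}{5}}$ and $\la^{9/2}\|x\cdot(D^2\rho x)\|_{\frac{6}{5}}$ respectively. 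Matched against the prefactors $\la^{-4},\la^{-5},\la^{-6}$ these all produce the same factor $\la^{-3/2}\le1$ on $\la\ge1$, so Hölder's inequality with Lemmas \ref{lem:3.1} and \ref{lem:5.1} gives $|H''(\la)|\le C\rho_0\,C(u)$ for every $\la\ge1$.

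\textbf{Conclusion and obstacle.} Collecting the three estimates,
\[
F''(\la)\le\Big(\tfrac{3(p-1)}{2(p+1)}-\tfrac{3(p-1)(3p-5)}{4(p+1)}\Big)C(u)+C\rho_0\,C(u)=-\tfrac{3(p-1)(3p-7)}{4(p+1)}C(u)+C\rho_0\,C(u),
\]
and since $p>\frac{7}{3}$ the leading coefficient is strictly negative; as $u\ne0$, Lemma \ref{lem:5.1} forces $C(u)=\|u\|_{p+1}^{p+1}>0$, so taking $\rho_0$ small enough (no larger than the thresholds in Lemmas \ref{lem:5.1}--\ref{lem:5.3}) makes the right-hand side negative for all $\la\ge1$. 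The genuinely delicate step is the uniform-in-$\la$ control of $H''(\la)$: one must verify that the negative powers of $\la$ generated by differentiating $\la^{-2}\rho(\la^{-1}x)$ exactly dominate the growth $\la^{5/2},\la^{7/2},\la^{9/2}$ of the $L^{6/5}$-norms of the dilated $\rho$, $x\cdot\nabla\rho$, $x\cdot(D^2\rho x)$, so that the doping term stays a small perturbation on the whole ray $[1,\infty)$ — this is precisely why (A1) imposes $L^{6/5}$-control on both first- and second-order moments of $\rho$.
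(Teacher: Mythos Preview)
Your argument is correct and follows essentially the same route as the paper's proof: compute $F''(\la)$, drop the $B$ and $D$ terms, use $\la^{\frac{3p-7}{2}}\ge1$ and the dilation scaling to reduce the doping contribution to a $\la^{-3/2}$-factor (hence bounded for $\la\ge1$), then invoke $Q(u)\le0$ together with Lemmas \ref{lem:3.1} and \ref{lem:5.1} to absorb everything into $-\frac{3(p-1)(3p-7)}{4(p+1)}C(u)$ plus an $O(\rho_0)C(u)$ error. The only cosmetic difference is that you write the remainder with prefactors $\la^{-4},\la^{-5},\la^{-6}$ acting on $\rho,\,x\cdot\nabla\rho,\,x\cdot(D^2\rho x)$, whereas the paper uses a uniform $\la^{-4}$ prefactor acting on the rescaled quantities $(\la^{-1}x)\cdot\nabla\rho(\la^{-1}x)$ etc.; these are of course the same expressions, and both reductions yield the identical $\la^{-3/2}$ bound.
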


\begin{proof}
First from \ef{eq:5.10} and by the H\"older inequality, we find that
\begin{align*}
F''(\la) 
&= A(u) - \frac{3(p-1)(3p-5)}{4(p+1)} \la^{\frac{3p-7}{2}} C(u)
-3e^2 \la^{-4} \intR S_0(u) \rho( \la^{-1} x) \,dx \\
&\quad - 3e^2 \la^{-4} \intR S_0(u) (\la^{-1} x) \cdot \nabla \rho( \la^{-1} x) \,dx \\
&\quad - \frac{e^2 \la^{-4}}{2} \intR S_0(u) 
\left( \la^{-1}x) \cdot (D^2 \rho(\la^{-1} x) (\la^{-1} x) \right) dx \\
&\le A(u) - \frac{3(p-1)(3p-5)}{4(p+1)} \la^{\frac{3p-7}{2}} C(u) \\
&\quad + C e^2 \la^{-\frac{3}{2}}
\left( \| \rho \|_{\frac{6}{5}} + \| x \cdot \nabla \rho \|_{\frac{6}{5}}
+ \| x \cdot (D^2 \rho x) \|_{\frac{6}{5}} \right) \| S_0(u) \|_6 \\
&\le A(u) - \frac{3(p-1)(3p-5)}{4(p+1)} C(u) \\
&\quad + C e^2 
\left( \| \rho \|_{\frac{6}{5}} + \| x \cdot \nabla \rho \|_{\frac{6}{5}}
+ \| x \cdot (D^2 \rho x) \|_{\frac{6}{5}} \right) \| S_0(u) \|_6.
\end{align*}
Moreover since 
\begin{align*}
0 \ge Q(u) 
&= A(u) - \frac{3(p-1)}{2(p+1)} C(u) + e^2 D(u) -4 e^2 E_1(u) + e^2 E_2(u) \\
&\ge A(u) - \frac{3(p-1)}{2(p+1)} C(u) 
-C e^2 \left( \| \rho \|_{\frac{6}{5}} + \| x \cdot \nabla \rho \|_{\frac{6}{5}} \right) 
\| S_0(u) \|_6,
\end{align*}
it follows that
\[
\begin{aligned}
F''(\la) &\le - \frac{3(p-1)(3p-7)}{4(p+1)} C(u) 
+ C e^2 
\left( \| \rho \|_{\frac{6}{5}} + \| x \cdot \nabla \rho \|_{\frac{6}{5}} 
+ \| x \cdot (D^2 \rho x) \|_{\frac{6}{5}} \right) \| S_0(u) \|_6.
\end{aligned}
\]
Thus by Lemma \ref{lem:3.1} and Lemma \ref{lem:5.1}, we deduce that
\begin{align*}
F''(\la) 
&\le - \left\{ \frac{3(p-1)(3p-7)}{4(p+1)} 
- C e^2 
\big( \| \rho \|_{\frac{6}{5}} + \| x \cdot \nabla \rho \|_{\frac{6}{5}}
+ \| x \cdot (D^2 \rho x) \|_{\frac{6}{5}} \big) \right\} \| u \|_{p+1}^{p+1} \\
&< 0 \quad \text{for all} \ \la \ge 1.
\end{align*}
This ends the proof.
\end{proof}

By Lemma \ref{lem:5.6}, we obtain the following proposition,
which is also a fundamental tool for the strong instability.

\begin{proposition} \label{prop:5.7}
Suppose $\frac{7}{3} < p <5$ and let $u_0$ be the ground state solution of \ef{eq:1.1}.
There exists $\rho_0>0$ such that if 
$e^2 \left( \| \rho \|_{\frac{6}{5}} + \| x \cdot \nabla \rho \|_{\frac{6}{5}}
+ \| x \cdot (D^2 \rho x) \|_{\frac{6}{5}} \right) \le \rho_0$, it holds that
\begin{equation} \label{eq:5.12}
I(u_0^{\la}) < I(u_0), \quad Q(u_0^{\la}) < 0 
\quad \text{and} \quad J(u_0^{\la})<0 \quad \text{for all} \ \la >1.
\end{equation}

\end{proposition}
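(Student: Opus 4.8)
The strategy is to read off the first two assertions of \ef{eq:5.12} directly from Lemma \ref{lem:5.6} together with \ef{eq:5.11}, and to obtain the sign of $J$ along the scaling curve by combining the resulting strict monotonicity of $\la\mapsto I(u_0^\la)$ with the minimality of $u_0$ on the Nehari--Pohozaev set $\CM$.

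Since $u_0$ is a ground state solution, it is a weak solution of \ef{eq:1.1}; hence Lemma \ref{lem:3.2} gives $N(u_0)=P(u_0)=0$, so that $Q(u_0)=\frac32 N(u_0)-P(u_0)=0$ and $J(u_0)=2N(u_0)-P(u_0)=0$. In particular $u_0$ meets the hypotheses $J(u_0)\le0$, $Q(u_0)\le0$ of Lemma \ref{lem:5.6}, and (shrinking $\rho_0$ if necessary) we obtain $F''(\la)<0$ for all $\la\in[1,\infty)$, where $F(\la)=I(u_0^\la)$. Evaluating \ef{eq:5.11} at $\la=1$ gives $F'(1)=G(1)=Q(u_0)=0$. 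Integrating $F''<0$ once yields $F'(\la)<0$ for $\la>1$, and integrating again yields $F(\la)<F(1)$, i.e. $I(u_0^\la)<I(u_0)$ for every $\la>1$; moreover $Q(u_0^\la)=\la F'(\la)<0$ for every $\la>1$ by \ef{eq:5.11}. This settles the first two claims.

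For the third claim I would first record the blow-down behaviour of the fibering map. By \ef{eq:5.2} with $u=u_0$, together with the scaling estimates \ef{eq:3.6} and \ef{eq:3.8} (which, for $a=\frac32$, $b=1$, give $e^2|E_1(u_0^\la)|+e^2|E_2(u_0^\la)|\le C\la^{1/2}e^2(\|\rho\|_{\frac65}+\|x\cdot\nabla\rho\|_{\frac65})\|S_0(u_0)\|_6$), every term of $J(u_0^\la)$ is $O(\la^2)$ except $-\tfrac{2p-1}{p+1}\la^{3(p-1)/2}C(u_0)$; since $p>\tfrac73$ makes $\tfrac{3(p-1)}{2}>2$ and $C(u_0)=\|u_0\|_{p+1}^{p+1}>0$, this term dominates and $J(u_0^\la)\to-\infty$ as $\la\to\infty$. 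Now suppose, for contradiction, that $J(u_0^{\la_1})\ge0$ for some $\la_1>1$. If $J(u_0^{\la_1})=0$ then $u_0^{\la_1}\in\CM$, whence $I(u_0^{\la_1})\ge\sigma=I(u_0)$ by Proposition \ref{prop:4.1}, contradicting $I(u_0^{\la_1})<I(u_0)$. If $J(u_0^{\la_1})>0$, then by continuity of $\la\mapsto J(u_0^\la)$ and $J(u_0^\la)\to-\infty$ the intermediate value theorem produces $\la_2>\la_1>1$ with $J(u_0^{\la_2})=0$, so $u_0^{\la_2}\in\CM$ and again $I(u_0^{\la_2})\ge I(u_0)$ while $I(u_0^{\la_2})<I(u_0)$, a contradiction. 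Hence $J(u_0^\la)<0$ for all $\la>1$, which finishes the proof.

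I expect the genuinely delicate step to be the last one: the bounds $I(u_0^\la)<I(u_0)$ and $Q(u_0^\la)<0$ are formal consequences of Lemma \ref{lem:5.6} and the identity \ef{eq:5.11}, but the sign of $J(u_0^\la)$ is not governed by a single scaling relation and has to be extracted indirectly, using both the variational characterization $I(u_0)=\inf_{\CM}I$ and the fact that the fibering map $J(u_0^\la)$ is driven to $-\infty$ by the $L^2$-supercritical nonlinearity.
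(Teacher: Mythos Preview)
Your proof is correct and follows essentially the same route as the paper. The first assertion is identical to the paper's argument. For the second, you read off $Q(u_0^\la)=\la F'(\la)<0$ directly from \ef{eq:5.11}, whereas the paper instead computes $G'(\la)=F'(\la)+\la F''(\la)<0$ and then uses $G(\la)<G(1)=0$; your version is slightly more economical. For the third assertion the paper simply argues that $J(u_0^{\tilde\la})=0$ for some $\tilde\la>1$ would force $I(u_0)\le I(u_0^{\tilde\la})$, contradicting the first part; your version is more careful in that you also rule out $J(u_0^{\la_1})>0$ by invoking the large-$\la$ blow-down $J(u_0^\la)\to-\infty$ (driven by the $L^2$-supercritical term $\la^{3(p-1)/2}C(u_0)$) together with the intermediate value theorem. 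This extra step makes explicit what the paper leaves to the reader, since merely knowing $J(u_0^\la)\ne0$ for $\la>1$ and $J(u_0^1)=0$ does not by itself fix the sign.
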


\begin{proof}
First by Lemma \ref{lem:5.6} and from $F'(1)=Q(u_0) =0$, we find that
\[
F'(\la) = F'(1) + \int_1^\la F''(\tau) \,d\tau <  0 \quad \text{for} \ \la >1,
\]
which yields that $F(\la) < F(1)$ and hence
$I(u_0^{\la}) < I(u_0)$ for all $\la >1$.

Next from \ef{eq:5.11}, one has
\[
G'(\la) = F'(\la) + \la F''(\la) < 0 \quad \text{for} \ \la >1
\]
from which we deduce that $Q(u_0^{\la}) < Q(u_0) =0$ for all $\la >1$.

Finally if $J(u_0^{\tilde{\la}})=0$ for some $\tilde{\la} >1$, 
it follows by Proposition \ref{prop:4.1} that $I(u_0) \le I(u_0^{\tilde{\la}})$.
This is a contradiction to the first assertion.
\end{proof}

We finish this section by mentioning that
we cannot expect to apply 
the classical approach due to \cite{BC} (see also \cite{FCW, LeC}).

Indeed in \cite{BC, FCW, LeC}, it was required to establish the variational
characterization of the ground state solutions as follows:
\begin{equation} \label{eq:5.13}
I(u_0) = \hat{\sigma} := \inf\{ I(u) \mid u \in H^1(\R^3,\C) \setminus \{ 0 \}, 
Q(u)=0 \}.
\end{equation}
If we could show that for any $u \in H^1(\R^3, \C)\setminus \{ 0 \}$ with $Q(u)=0$,
\begin{equation} \label{eq:5.14}
\text{there exists a unique $\hat{\la}>0$ such that $G(\hat{\la})= Q(u^{\hat{\la}})=0$},
\end{equation}
then we are able to prove \ef{eq:5.13}.
In fact since $Q(u)= \frac{3}{2}N(u)-P(u)$,
we have by Lemma \ref{lem:3.2} that $I(u_0) \ge \hat{\sigma}$.
On the other hand since $F'(\la) = \frac{G(\la)}{\la}$,
$G(\la) < 0$ for large $\la$ and $\hat{\la}=1$ for $Q(u)=0$, it follows that
\[
F'(\la) > 0 \ \text{for} \ 0 <\la <1 \quad \text{and} \quad
F'(\la) < 0 \ \text{for} \ \la >1
\]
and hence $F(\la) \le F(1)$ for any $\la >0$ and
$u \in H^1(\R^3, \C)\setminus \{ 0 \}$ with $Q(u)=0$.
Arguing as Lemma \ref{lem:5.2}, we also see that 
$J(u^{\la^*})=0$ for some $\la^*>0$.
Thus by Proposition \ref{prop:4.1}, we arrive at
\[
I(u_0) \le I(u^{\la^*}) \le I(u) \quad \text{for any} \ 
u \in H^1(\R^3, \C)\setminus \{ 0 \} \ \text{with} \ Q(u)=0,
\]
which yields that $I(u_0) \le \hat{\sigma}$ and hence \ef{eq:5.13}.

Our main concern is whether \ef{eq:5.14} holds or not.
When $Q(u)=0$ and $Q(u^{\la})=0$, we can see that
\[
0= \frac{3(p-1)}{2(p+1)} ( \la^2 - \la^{\frac{3(p-1)}{2}} ) C(u) 
+ e^2 (\la - \la^2) D(u) + \hat{R}(\la, u) \quad \text{for} \ \la >0,
\]
where
\begin{align*}
\hat{R}(\la,u) &:= e^2 \intR S_0(u) \hat{M}(\la, u ) \,dx, \\
\hat{M}(\la, x) &:= \frac{1}{\la^2} \left( \rho(\la^{-1}x) 
+ \frac{1}{2} (\la^{-1}x) \cdot \nabla \rho (\la^{-1} x) \right) 
- \la^2 \left( \rho(x) + \frac{1}{2} x \cdot \nabla \rho(x) \right).
\end{align*}
Thus if we could show that
\begin{equation} \label{eq:5.15}
\hat{R}(\la, u) > 0 \ \text{for} \ 0 < \la < 1
\quad \text{and} \quad \hat{R}(\la,u) < 0 \ \text{for} \ \la >1,
\end{equation}
we obtain the uniqueness of $\hat{t}$.
In order to confirm \ef{eq:5.15}, let us put
\[
A(\la, x) := \la^4 \left( \rho(\la x) + \frac{1}{2} (\la x) \cdot \nabla \rho( \la x) \right).
\]
Then we can see that
\[
\hat{M}(\la, x) = \la^2 \left( A(\la^{-1}x) - A(1,x) \right).
\]
Moreover \ef{eq:5.15} is satisfied if
\begin{equation} \label{eq:5.16}
\frac{\partial A}{\partial \la} (\la,x) > 0 \quad \text{for all} \ \la >0.
\end{equation}
By a direct calculation, one also has
\[
\frac{\partial A}{\partial \la}(\la,x)
= \frac{\la^3}{2} \left\{ 8 \rho(\la x) + 7 (\la x) \cdot \nabla \rho(\la x)
+ (\la x) \cdot \big( D^2 \rho(\la x) (\la x) \big) \right\}.
\]
Thus we are able to conclude that \ef{eq:5.16} holds by assuming
\begin{equation} \label{eq:5.17}
8 \rho(x) + 7 x \cdot \nabla \rho(x) + x \cdot ( D^2 \rho(x)x) >0
\quad \text{for all} \ x \in \R^3.
\end{equation}

However we cannot expect \ef{eq:5.17} to hold for doping profiles.
When $\rho$ is a Gaussian function $e^{-\alpha |x|^2}$ for $\alpha >0$,
it holds that
\[
8 \rho(x) + 7 x \cdot \nabla \rho(x) + x \cdot ( D^2 \rho(x)x) 
= 4 e^{-\alpha |x|^2} ( \alpha^2 |x|^4 - 4 \alpha |x|^2 + 2).
\]
Hence no matter how we choose $\alpha$,
\ef{eq:5.17} fails to hold for $\sqrt{ \frac{2- \sqrt{2}}{\alpha}} < |x| <
\sqrt{\frac{2+\sqrt{2}}{\alpha}}$. 
Moreover if we consider $\rho(x)= \frac{1}{1+|x|^{\alpha}}$, we find that
\[
\begin{aligned}
&8 \rho(x) + 7 x \cdot \nabla \rho(x) + x \cdot ( D^2 \rho(x)x) \\
&= \frac{1}{(1+|x|^{\alpha})^3}
\left( (\alpha-2)(\alpha-4) |x|^{2 \alpha} -(\alpha-2)(\alpha +8) |x|^{\alpha} + 8 \right).
\end{aligned}
\]
Then it folds that
\[
\begin{cases}
\alpha > 4 &\Rightarrow \quad 
\text{\ef{eq:5.17} fails to hold on some annulus}, \\
2 < \alpha \le 4 &\Rightarrow \quad
\text{\ef{eq:5.17} fails to hold for large $|x|$}, \\
\alpha < 2 &\Rightarrow \quad \rho \notin L^{\frac{6}{5}}(\R^3).
\end{cases}
\]
In this sense, the assumption (A1) and \ef{eq:5.17} seem to be inconsistent.

\section{Strong instability of standing waves}

In this section, we prove Theorem \ref{thm:1.1}.
For this purpose, let us introduce the set
\begin{equation} \label{eq:6.1}
\mathcal{B} := \{ u \in H^1(\R^3, \C) \mid
I(u) < I(u_0), \ J(u) < 0 , \ Q(u) <0 \},
\end{equation}
where $u_0$ is the ground state solution of \ef{eq:1.1}.
We recall that $\mathcal{I}(u)= I(u) + e^2 F$ and $F$ is independent of $u$,
meaning that we may replace $\mathcal{I}$ by $I$ in the definition of $\mathcal{B}$. 
Then we have the following lemma, which states that
the set $\mathcal{B}$ given in \ef{eq:6.1} is invariant under the flow for \ef{eq:2.1}.

\begin{lemma} \label{lem:6.1}
Suppose that $\frac{7}{3} < p < 5$ and assume $\psi_0 \in \mathcal{B}$.
Let $T^*$ be the maximal existence time for $\psi_0$.
There exists $\rho_0>0$ such that if 
$e^2 \left( \| \rho \|_{\frac{6}{5}} + \| x \cdot \nabla \rho \|_{\frac{6}{5}}
+ \| x \cdot (D^2 \rho x) \|_{\frac{6}{5}} \right) \le \rho_0$, then
the solution $\psi(t)$ of \ef{eq:2.1} with $\psi(0)=\psi_0$ belongs to 
$\mathcal{B}$ for all $t \in [0,T^*)$.
\end{lemma}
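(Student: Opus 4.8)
The plan is a continuity (invariant-set) argument that combines the two conservation laws of \eqref{eq:2.1} with the variational characterizations of $u_0$ from Sections~4 and 5; once those are in hand the proof is bookkeeping. Throughout, $\rho_0$ is taken to be the minimum of the thresholds appearing in Proposition~\ref{prop:4.1} and Proposition~\ref{prop:5.4}.

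\textbf{Step 1 (conservation of $I$ along the flow).} By Proposition~\ref{prop:2.1}, for $\psi_0 \in H^1(\R^3,\C)$ the solution $\psi(t)$ of \eqref{eq:2.1} satisfies $\mathcal{E}(\psi(t))=\mathcal{E}(\psi_0)$ and $\|\psi(t)\|_2=\|\psi_0\|_2$ on $[0,T^*)$. Since the functional $\mathcal{I}$ of \eqref{eq:1.4} differs from the conserved energy $\mathcal{E}$ only by the term $\tfrac{\omega}{2}\|\cdot\|_2^2$, and $I=\mathcal{I}-e^2 F$ with $F$ independent of $u$, it follows that $I(\psi(t))=I(\psi_0)$ for all $t\in[0,T^*)$. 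In particular the first defining inequality of $\mathcal{B}$, namely $I(\psi(t))<I(u_0)$, is automatically preserved. Moreover $0\notin\mathcal{B}$ because $J(0)=0$, hence $\psi_0\neq 0$, and charge conservation gives $\psi(t)\neq 0$ for all $t$; this is what makes the characterizations of $u_0$ (stated for nonzero $H^1$ functions) applicable below.

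\textbf{Step 2 (propagation of $J<0$ and $Q<0$).} Suppose this fails and set
\[
t_0 := \inf\{\, t\in(0,T^*)\mid J(\psi(t))\ge 0\ \text{or}\ Q(\psi(t))\ge 0 \,\}.
\]
Since $t\mapsto\psi(t)$ is continuous into $H^1(\R^3,\C)$ and $J,Q$ are continuous on $H^1(\R^3,\C)$, and $J(\psi_0),Q(\psi_0)<0$, we get $t_0>0$, while $J(\psi(t))<0$ and $Q(\psi(t))<0$ on $[0,t_0)$; passing to the limit, $J(\psi(t_0))\le 0$ and $Q(\psi(t_0))\le 0$, with at least one of these an equality (the bad set is closed). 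If $Q(\psi(t_0))=0$, then $\psi(t_0)\in H^1(\R^3,\C)\setminus\{0\}$ satisfies $J(\psi(t_0))\le 0$ and $Q(\psi(t_0))\le 0$, so Proposition~\ref{prop:5.4} gives $0=\tfrac12 Q(\psi(t_0))\le I(\psi(t_0))-I(u_0)=I(\psi_0)-I(u_0)<0$, a contradiction. If instead $Q(\psi(t_0))<0$, then necessarily $J(\psi(t_0))=0$, i.e. $\psi(t_0)\in\CM$, and Proposition~\ref{prop:4.1}(i) yields $I(\psi(t_0))\ge\sigma=I(u_0)$, again contradicting $I(\psi(t_0))=I(\psi_0)<I(u_0)$. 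Hence no such $t_0$ exists and $\psi(t)\in\mathcal{B}$ for all $t\in[0,T^*)$.

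\textbf{Main obstacle.} There is essentially no new analytic difficulty here: the substance — the energy inequality of Lemma~\ref{lem:5.3}, the fibering-map analysis, the resulting bound $\tfrac12 Q(u)\le I(u)-I(u_0)$ on $\{J\le 0,\ Q\le 0\}$ (Proposition~\ref{prop:5.4}), and $\inf_{\CM} I=I(u_0)$ — was already carried out in Sections~4--5. The only point requiring care is the dichotomy at $t_0$: one must verify that the boundary value $\psi(t_0)$ still lies in $\{J\le 0\}\cap\{Q\le 0\}$ (so that exactly one of the two Propositions applies according to whether $Q(\psi(t_0))$ vanishes) and that $\psi(t_0)\neq 0$, which is precisely why charge conservation and $0\notin\mathcal{B}$ are recorded in Step~1.
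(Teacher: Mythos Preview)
Your proof is correct and follows essentially the same approach as the paper's: conservation of $I$ along the flow, then a continuity argument ruling out $J=0$ via Proposition~\ref{prop:4.1} and $Q=0$ via Proposition~\ref{prop:5.4}. The only cosmetic difference is that you package the two sign conditions into a single first-exit time $t_0$ and split into cases, whereas the paper treats $J$ first and then $Q$ sequentially; your version is slightly more explicit about $\psi(t)\neq 0$, but the ingredients and logic are identical.
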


\begin{proof}
First by the energy conservation law and the charge conservation law
given in Proposition \ref{prop:2.1}, it follows that
\begin{equation} \label{eq:6.2}
I \big( \psi(t) \big)
= E \big( \psi(t) \big) + \frac{\omega}{2} \| \psi(t) \|_2^2 
= E (\psi_0 ) + \frac{\omega}{2} \| \psi_0 \|_2^2 
= I(\psi_0) < I(u_0) 
\end{equation}
for all $t \in [0,T^*)$.

Next if $J \big( \psi(t_1) \big) =0$ for some $t_1 \in (0,T^*)$, 
we have by Proposition \ref{prop:4.1} that
$I(u_0) \le I \big( \psi(t_1) \big)$, which contradicts \ef{eq:6.2}.
Thus by the continuity of the solution $\psi(t)$ with respect to $t$, 
we conclude that $J \big( \psi(t) \big) <0$ for all $t \in [0,T^*)$.

Finally suppose by contradiction that $Q \big( \psi(t_2) \big) =0$ 
for some $t_2 \in (0,T^*)$.
Then by Proposition \ref{prop:5.4}, it holds that
\[
\frac{1}{2} Q \big( \psi(t_2) \big) 
\le I \big( \psi(t_2) \big) - I(u_0).
\]
Again, this is a contradiction with \ef{eq:6.2}.
This completes the proof.
\end{proof}

Next we establish the following lemma.

\begin{lemma} \label{lem:6.2}
Suppose that $\frac{7}{3} < p < 5$ and let $u_0$ be the 
ground state solution of \ef{eq:1.1}.
There exists $\rho_0>0$ such that if 
$e^2 \left( \| \rho \|_{\frac{6}{5}} + \| x \cdot \nabla \rho \|_{\frac{6}{5}}
+ \| x \cdot (D^2 \rho x) \|_{\frac{6}{5}} \right) \le \rho_0$, then
$u_0^{\la} \in \mathcal{B}$ for all $\la >1$.
\end{lemma}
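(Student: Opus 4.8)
The statement is essentially a repackaging of Proposition \ref{prop:5.7}, so the plan is short. First I would recall that, by definition, $u \in \mathcal{B}$ precisely when the three strict inequalities $I(u) < I(u_0)$, $J(u) < 0$ and $Q(u) < 0$ all hold, and that the $L^2$-invariant scaling $u_0^{\la}(x) = \la^{\frac{3}{2}} u_0(\la x)$ keeps $u_0$ in $H^1(\R^3,\C)$ for every $\la > 0$. Hence it suffices to verify these three inequalities for $u = u_0^{\la}$ with $\la > 1$.

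To invoke Proposition \ref{prop:5.7} one only needs to know that $u_0$ itself satisfies $J(u_0) \le 0$ and $Q(u_0) \le 0$, and this is immediate: since $u_0$ is a ground state solution of \ef{eq:1.1}, Lemma \ref{lem:3.2} gives $N(u_0)=0$ and $P(u_0)=0$, whence $J(u_0)=2N(u_0)-P(u_0)=0$ and $Q(u_0)=\frac{3}{2}N(u_0)-P(u_0)=0$. Choosing $\rho_0$ to be the minimum of the thresholds appearing in Propositions \ref{prop:4.1} and \ref{prop:5.7}, the smallness hypothesis of the present lemma then lets Proposition \ref{prop:5.7} apply, and it yields exactly $I(u_0^{\la}) < I(u_0)$, $Q(u_0^{\la}) < 0$ and $J(u_0^{\la}) < 0$ for all $\la > 1$. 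These are the defining inequalities of $\mathcal{B}$, so $u_0^{\la} \in \mathcal{B}$ for all $\la > 1$.

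All of the analytic work --- the monotonicity of $F(\la)=I(u_0^{\la})$ and $G(\la)=Q(u_0^{\la})$, resting on the sign of $F''$ obtained in Lemma \ref{lem:5.6}, together with the variational characterization of $u_0$ from Proposition \ref{prop:4.1} used to rule out $J(u_0^{\la})=0$ --- has already been carried out in Section 5. Consequently there is no genuine obstacle left in this lemma; the only point deserving a moment's attention is the bookkeeping of the smallness threshold, i.e.\ confirming that a single $\rho_0$ can be chosen below all the constants required by the earlier results.
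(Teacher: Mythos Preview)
Your proposal is correct and matches the paper's own proof, which is simply ``This is a direct consequence of Proposition \ref{prop:5.7}.'' The extra verification that $J(u_0)=Q(u_0)=0$ is harmless but not strictly needed, since Proposition \ref{prop:5.7} is already stated for the ground state $u_0$ rather than for a general $u$ with $J(u)\le 0$ and $Q(u)\le 0$.
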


\begin{proof}
This is a direct consequence of Proposition \ref{prop:5.7}.
\end{proof}

Now we are ready to prove Theorem \ref{thm:1.1}.

\begin{proof}[Proof of Theorem \ref{thm:1.1}]
First by Lemma \ref{lem:4.3}, we find that $u_0 \in \Sigma$.
This together with Lemma \ref{lem:6.2} implies that
$u_0^{\la} \in \mathcal{B} \cap \Sigma$ for all $\la >1$.
Then by Lemma \ref{lem:6.1}, it follows that
the solution $\psi(t)$ of \ef{eq:2.1} with $\psi(0)=u_0^{\la}$ belongs to $\mathcal{B}$.
Moreover applying Lemma \ref{lem:2.2}, Proposition \ref{prop:5.4}
and using the conservation of $I$, we obtain
\[
V''(t) = 8 Q \big( \psi(t) \big) 
\le 16 \big( I \big( \psi(t) \big) - I(u_0) \big)
= 16 \big( I(u_0^{\la}) - I(u_0) \big) <0
\]
for all $t \in (0,T^*)$.
This implies that $T^*< \infty$ and the solution $\psi(t)$ blows up in finite time.

Finally since $u_0^{\la} \to u_0$ in $H^1(\R^3,\C)$ as $\la \searrow 1$,
we conclude that the standing wave $e^{i \omega t} u_0$ of \ef{eq:1.2}
is strongly unstable.
\end{proof}

\section{The case $\rho$ is a characteristic function}

In this section, we consider the case where the doping profile $\rho$
is a characteristic function, 
which appears frequently in physical literatures \cite{Je, MRS, Sel}.
More precisely, let $\{ \Omega_i \}_{i=1}^m \subset \R^3$ be
disjoint bounded open sets with smooth boundary.
For $\sigma_i>0$ $(i=1,\cdots, m)$, 
we assume that the doping profile $\rho$ has the form:
\begin{equation} \label{eq:7.1}
\rho(x)= \sum_{i=1}^m \sigma_i \chi_{\Omega_i}(x), \quad
\chi_{\Omega_i}(x)=
\begin{cases} 
1 & (x \in \Omega_i), \\
0 & (x \notin \Omega_i).
\end{cases}
\end{equation}
In this case, $\rho$ cannot be weakly differentiable so that the assumption
(A1) does not make sense.
Even so, we are able to obtain the strong instability of standing waves  
by imposing some smallness condition related with $\sigma_i$ and $\Omega_i$.

To state our main result for this case, 
let us put $\dis L:= \sup_{x \in \partial \Omega} |x| < \infty$.
A key point is the following \textit{sharp boundary trace inequality} 
due to \cite[Theorem 6.1]{A},
which we present here according to the form used in this paper.

\begin{proposition} \label{prop:7.1}
Let $\Omega \subset \R^3$ be a bounded domain with smooth boundary
and $\gamma: H^1(\Omega) \to L^2(\partial \Omega)$ be 
the trace operator.
Then it holds that
\[
\int_{\partial \Omega} | \gamma(u) |^2 \,dS
\le \kappa_1(\Omega) \int_{\Omega} |u|^2 \,dx
+ \kappa_2(\Omega) 
\left( \int_{\Omega} |u|^2 \,dx \right)^{\frac{1}{2}}
\left( \int_{\Omega} | \nabla u|^2 \,dx \right)^{\frac{1}{2}}
\]
for any $u \in H^1(\Omega)$,
where $\kappa_1(\Omega)= \frac{|\partial \Omega|}{|\Omega|}$,
$\kappa_2(\Omega) = \big\| | \nabla w| \big\|_{L^{\infty}(\partial\Omega)}$
and $w$ is a unique solution of the torsion problem:
\[
\Delta w = \kappa_1(\Omega) \ \hbox{in} \ \Omega, 
\quad \frac{\partial w}{\partial n} = 1 \ \hbox{on} \ \partial \Omega.
\]
\end{proposition}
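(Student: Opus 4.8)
The plan is to use the torsion function $w$ as a multiplier in a Green identity, after reducing to smooth functions. First I would note that, since the trace operator $\gamma: H^1(\Omega)\to L^2(\partial\Omega)$ is bounded and every term in the asserted inequality is continuous for the $H^1(\Omega)$-norm, it suffices to prove the estimate for $u\in C^\infty(\overline\Omega)$; moreover $|u|^2\in W^{1,1}(\Omega)$ with $\nabla(|u|^2)=2\,\RE(\bar u\,\nabla u)$, so $|\nabla(|u|^2)|\le 2|u|\,|\nabla u|$, and one may as well take $u$ real and nonnegative. Next I would invoke the torsion problem: the Neumann problem $\Delta w=\kappa_1(\Omega)$ in $\Omega$, $\partial w/\partial n=1$ on $\partial\Omega$ is solvable precisely because the compatibility condition $\kappa_1(\Omega)|\Omega|=|\partial\Omega|$ is exactly the definition of $\kappa_1(\Omega)$; the solution is unique up to an additive constant and lies in $C^\infty(\overline\Omega)$ by elliptic regularity together with the smoothness of $\partial\Omega$, and only $\nabla w$, which is uniquely determined, enters the computation.

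The heart of the argument is the identity obtained by testing the boundary integral against $\partial w/\partial n$. Using $\partial w/\partial n=1$ on $\partial\Omega$, the divergence theorem, and $\Delta w=\kappa_1(\Omega)$, one gets
\[
\int_{\partial\Omega}|u|^2\,dS=\int_{\partial\Omega}|u|^2\,\frac{\partial w}{\partial n}\,dS=\int_\Omega\Div\big(|u|^2\,\nabla w\big)\,dx=\kappa_1(\Omega)\int_\Omega|u|^2\,dx+\int_\Omega\nabla(|u|^2)\cdot\nabla w\,dx,
\]
so everything reduces to estimating the last integral by the cross term on the right-hand side of the inequality. Here I would use the key observation that, because $\Delta w$ is constant,
\[
\Delta\big(|\nabla w|^2\big)=2\,|D^2 w|^2+2\,\nabla w\cdot\nabla(\Delta w)=2\,|D^2 w|^2\ge 0,
\]
so $|\nabla w|^2$ is subharmonic and attains its maximum over $\overline\Omega$ on $\partial\Omega$; consequently $\big\| |\nabla w| \big\|_{L^\infty(\Omega)}=\kappa_2(\Omega)$. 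Combining this with $|\nabla(|u|^2)|\le 2|u|\,|\nabla u|$ and the Cauchy--Schwarz inequality then bounds $\int_\Omega\nabla(|u|^2)\cdot\nabla w\,dx$ by a multiple of $\kappa_2(\Omega)\big(\int_\Omega|u|^2\big)^{1/2}\big(\int_\Omega|\nabla u|^2\big)^{1/2}$, which yields the estimate.

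I expect the main obstacle to be squeezing out the \emph{sharp} constant: the straightforward chain ``$\int_\Omega\nabla(|u|^2)\cdot\nabla w\le\big\| |\nabla w| \big\|_{L^\infty(\Omega)}\int_\Omega|\nabla(|u|^2)|$, then Cauchy--Schwarz'' is lossy, and obtaining the precise constant asserted in Proposition \ref{prop:7.1} requires the more delicate analysis of the behaviour of $w$ near $\partial\Omega$ carried out in \cite{A}; this is why we quote the result rather than reprove it. A secondary, purely technical point is justifying the Green identity and the subharmonicity computation at the available regularity, which is exactly the role of the reduction to $u\in C^\infty(\overline\Omega)$ and the interior smoothness of $w$, together with the smoothness of $\partial\Omega$ that makes the trace, the surface measure $dS$, and the normal derivative $\partial w/\partial n$ meaningful.
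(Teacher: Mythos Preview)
The paper does not prove Proposition~\ref{prop:7.1}; it is quoted from \cite[Theorem~6.1]{A} and used only as a black box in Lemma~\ref{lem:7.6}. Your sketch is essentially Auchmuty's own argument: the divergence identity
\[
\int_{\partial\Omega}|u|^2\,dS=\kappa_1(\Omega)\int_\Omega|u|^2\,dx+\int_\Omega\nabla(|u|^2)\cdot\nabla w\,dx
\]
is exact, and the subharmonicity of $|\nabla w|^2$ (Bochner's formula, using that $\Delta w$ is constant) is precisely what reduces $\|\nabla w\|_{L^\infty(\Omega)}$ to its boundary value $\kappa_2(\Omega)$. The chain you flag as ``lossy'' does produce $2\kappa_2$ rather than $\kappa_2$, but this is immaterial here, since Proposition~\ref{prop:7.1} is only ever combined with unspecified absolute constants $C$ in Lemmas~\ref{lem:7.6} and~\ref{lem:7.9}; your outline therefore already supplies more than the paper itself does.
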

In relation to the size of $\rho$, we define
\[
D(\Omega) := L | \Omega |^{\frac{1}{6}} 
\left( L \| H \|_{L^2(\partial \Omega)} + |\partial \Omega |^{\frac{1}{2}} \right)
\left( \kappa_1(\Omega) | \Omega |^{\frac{1}{3}} + \kappa_2 (\Omega) 
\right)^{\frac{1}{2}},
\]
where $H$ is the mean curvature of $\partial \Omega$.

\begin{remark} \label{rem:7.2}
It is known that $\kappa_2(\Omega) \ge 1$; see \cite{A}.
Then by the isoperimetric inequality in $\R^3$:
\[
| \partial \Omega | \ge 3 | \Omega |^{\frac{2}{3}} | B_1 |^{\frac{1}{3}},
\]
and by the fact that $| \Omega | \le |B_L(0)| = L^3 |B_1|$, we find 
\begin{equation} \label{eq:7.2}
D(\Omega) \ge
\left( \frac{|\Omega|}{|B_1|} \right)^{\frac{1}{3}} |\Omega|^{\frac{1}{6}}
\cdot \sqrt{3} |\Omega|^{\frac{1}{3}} |B_1|^{\frac{1}{6}}
\left( 3 |B_1|^{\frac{1}{3}} +1 \right)^{\frac{1}{2}} 
=C |\Omega|^{\frac{5}{6}} = C \| \chi_{\Omega} \|_{L^{\frac{6}{5}}},
\end{equation}
where $C$ is a positive constant independent of $\Omega$. 
\end{remark}

Under these preparations, we have the following result.

\begin{theorem} \label{thm:7.3}
Suppose that $\frac{7}{3}<p<5$ and assume that $\rho$ is given by \ef{eq:7.1}.
There exists $\rho_0>0$ such that if
\[
e^2 \sum_{i=1}^m \sigma_i D(\Omega_i) \le \rho_0,
\]
then the statement of Theorem \ref{thm:1.1} holds true.
\end{theorem}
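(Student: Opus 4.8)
\noindent\textbf{Proof strategy for Theorem \ref{thm:7.3}.}
The plan is to re-run the scheme of Sections 2--6 with $\rho=\sum_{i=1}^{m}\sigma_i\chi_{\Omega_i}$, the one genuinely new ingredient being the treatment of the $\rho$-dependent nonlocal terms. For smooth $\rho$ the quantities $x\cdot\nabla\rho$ and $x\cdot(D^2\rho\,x)$ entered as volume integrals controlled by the $L^{6/5}$-norms of (A1); for a characteristic function these become, after distributional differentiation---equivalently, after applying Hadamard's transport formula to the dilated domains $\lambda\Omega_i$---boundary integrals over $\partial\Omega_i$. The sharp boundary trace inequality of Proposition \ref{prop:7.1}, applied to the $H^1$-function $S_0(u)$ on each $\Omega_i$, converts these boundary integrals back into bulk quantities, and the geometric constants that arise ($L$, $|\Omega_i|$, $|\partial\Omega_i|$, $\|H_i\|_{L^2(\partial\Omega_i)}$, $\kappa_1(\Omega_i)$, $\kappa_2(\Omega_i)$) recombine precisely into $D(\Omega_i)$; by Remark \ref{rem:7.2} the resulting smallness hypothesis is of the same strength as that of Theorem \ref{thm:1.1}. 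One first checks that the variational framework of Section 4 survives: since $\chi_{\Omega_i}\in L^{6/5}(\R^3)\cap L^{\infty}(\R^3)$ has compact support, the existence of a ground state solution $u_0$ and the Nehari--Pohozaev characterization $I(u_0)=\inf_{\CM}I$ carry over once the Pohozaev identity $P(u)=0$ is re-derived with the boundary term produced by $\nabla\rho$; accordingly $P$, $J$ and $Q$ keep their former shape with $e^2E_2(u)$ replaced by $-\frac{e^2}{2}\sum_{i=1}^{m}\sigma_i\int_{\partial\Omega_i}S_0(u)(x)\,\big(x\cdot\nu_i(x)\big)\,dS$, where $\nu_i$ is the outward unit normal of $\partial\Omega_i$.

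Next I would re-establish the virial identity of Lemma \ref{lem:2.2}. In the computation of the term $\mathrm{III}$, the integrations by parts involving $\nabla S_1$ (hence $\nabla\rho$) now produce surface integrals over $\partial\Omega_i$; one may either carry these out directly via the divergence theorem on each $\Omega_i$, or regularize $\rho$ by mollification and pass to the limit using uniform trace bounds from Proposition \ref{prop:7.1}. The outcome is again $V''(t)=8Q(\psi(t))$ with the modified $Q$.

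The core of the argument is the analogue of the energy inequality of Lemma \ref{lem:5.3}: for $\lambda\in[\delta^*,1]$ and $u\in H^1(\R^3,\C)$,
\[
f(\lambda)-f(1)\le -C_1(1-\lambda)^2\|u\|_{p+1}^{p+1}+C_2(1-\lambda)^2e^2\Big(\sum_{i=1}^{m}\sigma_i D(\Omega_i)\Big)\|u\|^2 .
\]
The polynomial part (Steps 1--2 of the proof of Lemma \ref{lem:5.3}) is untouched. The remainder $R(\lambda,u)$ is now a sum over $i$ of terms $-\frac{e^2\sigma_i}{4}$ times a moving-domain integral $\int_{\lambda\Omega_i}S_0(u)\,dx$ and its $\lambda$-derivatives; one Taylor-expands about $\lambda=1$, the zeroth- and first-order terms cancelling exactly as in the smooth case, and the second-order term is computed through Hadamard's variation-of-domain calculus,
\[
\frac{d}{d\lambda}\int_{\lambda\Omega}g\,dx=\frac{1}{\lambda}\int_{\partial(\lambda\Omega)}g(y)\,\big(y\cdot\nu(y)\big)\,dS(y),
\]
whose further differentiation expresses $\partial_\lambda^2 R$ entirely through surface integrals over $\partial(\lambda\Omega_i)$ carrying mean-curvature corrections $H_i$. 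Using $|y\cdot\nu|\le L$ (and $|y\cdot\nu|\le(\delta^*)^{-1}L$ on the dilated boundaries for $\lambda\ge\delta^*$), estimating $\int_{\partial(\lambda\Omega_i)}|S_0(u)|^2\,dS$ via Proposition \ref{prop:7.1} (and noting $\kappa_1(\lambda\Omega_i)=\lambda^{-1}\kappa_1(\Omega_i)$, $\kappa_2(\lambda\Omega_i)=\kappa_2(\Omega_i)$, $\|H_i\|_{L^2(\partial(\lambda\Omega_i))}=\|H_i\|_{L^2(\partial\Omega_i)}$), and invoking Hölder's inequality together with $\|S_0(u)\|_6\le C\|u\|^2$, $\|\nabla S_0(u)\|_2\le C\|u\|^2$ from Lemma \ref{lem:3.1}, one arrives at precisely the combination defining $D(\Omega_i)$. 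Carrying out this second-order moving-surface expansion and checking that the boundary-measure and curvature contributions recombine into $D(\Omega_i)$ is, I expect, the main obstacle; everything else is bookkeeping.

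Granted this inequality, Lemmas \ref{lem:5.1}, \ref{lem:5.2}, \ref{lem:5.6} and Propositions \ref{prop:5.4}, \ref{prop:5.7} transfer verbatim, with $\|\rho\|_{6/5}+\|x\cdot\nabla\rho\|_{6/5}+\|x\cdot(D^2\rho\,x)\|_{6/5}$ replaced throughout by $\sum_{i=1}^{m}\sigma_i D(\Omega_i)$. Since $\rho$ has compact support, condition (A3) holds trivially and the argument of Lemma \ref{lem:4.3} gives $u_0\in\Sigma$. Finally, with $\mathcal{B}$ defined as in \eqref{eq:6.1}, one shows as in Lemma \ref{lem:6.1} that $\mathcal{B}$ is invariant under the flow of \eqref{eq:2.1} (using the conservation laws of Proposition \ref{prop:2.1} and Proposition \ref{prop:5.4}), that $u_0^{\lambda}\in\mathcal{B}\cap\Sigma$ for all $\lambda>1$ as in Lemma \ref{lem:6.2}, and hence via the virial identity that $V''(t)\le 16\big(I(u_0^{\lambda})-I(u_0)\big)<0$ on $[0,T^*)$. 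Thus the solution issued from $u_0^{\lambda}$ blows up in finite time, and since $u_0^{\lambda}\to u_0$ in $H^1(\R^3,\C)$ as $\lambda\searrow 1$, the standing wave $e^{i\omega t}u_0$ is strongly unstable.
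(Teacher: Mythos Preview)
Your proposal is correct and follows essentially the same route as the paper: redefine $E_2$ and $E_3$ as boundary integrals over $\partial\Omega_i$ (the paper's Lemma~\ref{lem:7.4}), verify that $J$, $P$ and $Q$ retain their form (Lemma~\ref{lem:7.5}), estimate the boundary terms via the sharp trace inequality to produce $D(\Omega_i)$ (Lemma~\ref{lem:7.6}), compute the $\lambda$-derivatives of $\int_{\lambda\Omega_i}S_0(u)\,dx$ by the moving-surfaces calculus (Lemma~\ref{lem:7.7}), and use these to re-establish the energy inequality (Lemma~\ref{lem:7.9}); the rest of Sections 2--6 then transfers unchanged. The only cosmetic difference is that the paper pulls the surface integrals back to the fixed boundary $\partial\Omega_i$ before estimating, whereas you work on $\partial(\lambda\Omega_i)$ and invoke the scaling of $\kappa_1$, $\kappa_2$ and $\|H\|_{L^2}$---the two are equivalent.
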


\smallskip
We mention that the first place where $x \cdot \nabla \rho(x)$ 
and $x \cdot (D^2 \rho(x) x)$ appeared
was in the definition of $E_2(u)$ and $E_3(u)$ in \ef{eq:3.3}.
Under the assumption \ef{eq:7.1}, we replace them by
\begin{align*}
E_1(u) &= - \frac{1}{4} \sum_{i=1}^m \sigma_i 
\int_{\Omega_i} S_0(u) \,dx, \\
E_2(u) &:= - \frac{1}{2} \sum_{i=1}^m \sigma_i 
\int_{\partial \Omega_i} S_0(u) x \cdot n_i \,dS_i, \\
E_3(u) &:= - \frac{1}{2} \sum_{i=1}^m \sigma_i 
\int_{\partial \Omega_i} H_i(x) S_0(u) (x \cdot n_i)^2 \,dS_i, 
\end{align*}
where $n_i$ is the unit outward normal on $\partial \Omega_i$.
Then, we have the following.

\begin{lemma} \label{lem:7.4}
It holds that
\begin{align*}
\lim_{R \to \infty} \int_{B_R(0)} S_0(u) u x \cdot \nabla \bar{u} \,dx 
&= -10 E_1(u) + E_2(u), \\
\lim_{R \to \infty} \int_{B_R(0)} S_1(u) u x \cdot \nabla \bar{u} \,dx 
&= - 6 E_2(u) - E_3(u).
\end{align*}

\end{lemma}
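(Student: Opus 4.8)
The plan is to read the first integral as the contribution of the doping term $S_1$, and the second as that of the associated term $S_2$ (the nonlocal coefficients attached to the doping profile), to the Pohozaev (equivalently virial) identity, and to evaluate them directly in terms of the boundary integrals that now \emph{define} $E_2(u)$ and $E_3(u)$ — without differentiating $\rho$ in the interior. Since these identities are only needed for the real-valued ground state $u_0$, I would take $u$ real, so that $2u\,(x\cdot\nabla\bar u)=x\cdot\nabla|u|^2$, and integrate by parts on $B_R$. The flux across $\partial B_R$ is $O(R)\,\|S_i\|_{L^\infty(\partial B_R)}\,\|u\|_{L^\infty(\partial B_R)}^2\,|\partial B_R|$ and vanishes as $R\to\infty$, because $S_1$ and $S_2$ are Newtonian potentials of compactly supported sources (hence $O(|x|^{-1})$) while $u=u_0$ decays exponentially by Lemma~\ref{lem:4.3}. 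What remains is $-\tfrac32\int S_i|u|^2\,dx-\tfrac12\int(x\cdot\nabla S_i)|u|^2\,dx$; by the distributional pairing and one application of the divergence theorem on each $\Omega_j$ one has $\int S_1|u|^2=4E_1(u)$ and $\int S_2|u|^2=2E_2(u)$ in the new boundary sense, so the first summand is $-6E_1(u)$ when $i=1$ and $-3E_2(u)$ when $i=2$, and everything reduces to evaluating $\int(x\cdot\nabla S_i)|u|^2\,dx$.

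For the latter I would write $S_i$ as the Newtonian potential of its source, use Fubini together with the Euler/homogeneity identity $x\cdot\nabla_x\frac1{|x-y|}=-\frac1{|x-y|}-y\cdot\nabla_y\frac1{|x-y|}$, and integrate the last term by parts in the convolution variable $y$. This gives $\int(x\cdot\nabla S_i)|u|^2=2\int S_i|u|^2$ plus the potential of $y\cdot\nabla\nu_i$ paired with $|u|^2$, where $\nu_1=-\rho$ and $\nu_2=x\cdot\nabla\rho$. For $i=1$, $y\cdot\nabla\nu_1=-y\cdot\nabla\rho$, and since $\rho=\sum_j\sigma_j\chi_{\Omega_j}$ the divergence theorem on each $\Omega_j$ turns the extra potential into $-2E_2(u)$; hence $\int(x\cdot\nabla S_1)|u|^2=8E_1(u)-2E_2(u)$, so the first identity follows from $-6E_1-\tfrac12(8E_1-2E_2)=-10E_1+E_2$.

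The main obstacle is $i=2$, where $\nu_2=x\cdot\nabla\rho=-\sum_j\sigma_j(x\cdot n_j)\,dS_j$ is already a surface measure and $y\cdot\nabla\nu_2=\nu_2+x\cdot(D^2\rho\,x)$ is a second-order surface distribution. The target value is fixed by the formal relation $x\cdot\nabla S_2=3S_2+S_3$, namely $\int(x\cdot\nabla S_2)|u|^2=6E_2(u)+2E_3(u)$, so the real content is to make $\int S_3|u|^2=2E_3(u)$ rigorous in the characteristic case. This is exactly where the Hadamard calculus of moving surfaces \cite{Gri} enters: after the interior divergence theorem one is left with boundary integrals of the shape $\int_{\partial\Omega_j}(x\cdot n_j)\,(x\cdot\nabla S_0(u))\,dS_j$, and integrating by parts \emph{on} $\partial\Omega_j$ — using the surface divergence theorem and $\mathrm{div}_{\partial\Omega_j}x^{\top}=2-(x\cdot n_j)H_j$, where $x^{\top}$ is the tangential component of $x$ and $H_j$ the mean curvature of $\partial\Omega_j$ — produces the term $\int_{\partial\Omega_j}H_j\,S_0(u)\,(x\cdot n_j)^2\,dS_j$, i.e. $-2E_3(u)$ after the $\sigma_j$-sum, while the remaining tangential contributions reorganize into the $6E_2(u)$ piece. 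Then $-3E_2-\tfrac12(6E_2+2E_3)=-6E_2-E_3$ is the second identity. The genuinely delicate steps are these surface integrations by parts (for which smoothness of $\partial\Omega_j$ is essential) and the bookkeeping separating the $E_2$- and $E_3$-contributions; the Fubini applications, the decay of the $\partial B_R$ fluxes, and the interior divergence theorems are routine given Lemma~\ref{lem:4.3} and $\rho\in L^{\frac{6}{5}}(\R^3)$.
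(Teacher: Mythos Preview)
Your plan is correct and shares the same essential ingredients with the paper's proof: integration by parts in the $u$-variable to replace $u\,x\cdot\nabla\bar u$ by $|u|^2$, the Euler/homogeneity identity $x\cdot\nabla_x|x-y|^{-1}=-|x-y|^{-1}-y\cdot\nabla_y|x-y|^{-1}$, the interior divergence theorem on each $\Omega_i$ for the first identity, and the surface divergence theorem on $\partial\Omega_i$ to generate the mean-curvature contribution $E_3$ for the second. (You also read the statement correctly: the two integrands are the doping potentials $S_1$ and $S_2$, not $S_0(u)$ and ``$S_1(u)$''.)

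The only difference is packaging. You keep the computation at the level of the potentials---first reducing to $-\tfrac32\int S_i|u|^2-\tfrac12\int(x\cdot\nabla S_i)|u|^2$, then evaluating $x\cdot\nabla S_i$ via Euler and an IBP in the convolution variable---whereas the paper writes the kernel out from the start and uses Fubini so that every manipulation lands directly on integrals of $S_0(u)$ over $\Omega$ or $\partial\Omega$. For the delicate second identity the paper applies the surface divergence theorem (in Simon's first-variation form $\int_{\partial\Omega}\operatorname{div}X\,dS=-\int_{\partial\Omega}(X^{\perp}\cdot\vec H)\,dS$) to the \emph{full} vector field $S_0(u)(x\cdot n)\,x$, which avoids ever splitting $x\cdot\nabla S_0(u)$ into tangential and normal parts. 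Your route via the tangential identity $\operatorname{div}_{\partial\Omega}x^{\top}=2-H(x\cdot n)$ reaches the same place, but the bookkeeping is slightly heavier since the normal piece $(x\cdot n)^2\partial_nS_0(u)$ and the second-fundamental-form term coming from $\nabla^\top(x\cdot n)$ have to be recombined; the paper's ambient-divergence trick bypasses that.
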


\begin{proof}
For simplicity, let us consider the case $m=1$ and $\sigma=1$.
First by the divergence theorem and the fact $S_0(u)|u|^2 \in L^1(\R^3)$, 
one finds that
\[
\begin{aligned}
&\lim_{R \to \infty} \int_{B_R(0)} S_0(u) u x \cdot \nabla \bar{u} \,dx 
= -\frac{1}{8 \pi} \lim_{R \to \infty} \int_{\Omega} \int_{B_R(0)} 
\frac{u(y) y \cdot \nabla \overline{u(y)}}{|x-y|} \,d y \,d x \\
&= \frac{1}{16 \pi} \int_{\Omega} \int_{\mathbb{R}^3} 
\frac{|u(y)|^2 \operatorname{div}_y y}{|x-y|} d y d x
+\frac{1}{16 \pi} \int_{\Omega} \int_{\mathbb{R}^3} 
|u(y)|^2 y \cdot \nabla_y\left(\frac{1}{|x-y|}\right) d y dx \\
&= \frac{3}{16 \pi} \int_{\Omega} \int_{\mathbb{R}^3} 
\frac{|u(y)|^2}{|x-y|} \,d y \,d x
+\frac{1}{16 \pi} \int_{\Omega} \int_{\mathbb{R}^3}
|u(y)|^2 \frac{y \cdot(x-y)}{|x-y|^3} \,d y \,d x. 
\end{aligned}
\]
Using the identity $y \cdot (x-y) = -|x-y|^2+x \cdot (x-y)$,
the Fubini theorem and the divergence theorem, we get
\[
\begin{aligned}
&\lim_{R \to \infty} \int_{B_R(0)} S_0(u) u x \cdot \nabla \bar{u} \,dx \\
&= \frac{1}{8 \pi} \int_{\Omega} \int_{\mathbb{R}^3} 
\frac{|u(y)|^2}{|x-y|} \,d y \,d x
-\frac{1}{16 \pi} \int_{\mathbb{R}^3} \int_{\Omega}
|u(y)|^2 x \cdot \nabla_x \left(\frac{1}{|x-y|}\right) \,d x \,d y \\
&= \frac{1}{8 \pi} \int_{\Omega} \int_{\R^3} 
\frac{|u(y)|^2}{|x-y|} \,dy \,dx 
-\frac{1}{16 \pi} \int_{\mathbb{R}^3} \int_{\Omega} 
\operatorname{div}_x \left( \frac{|u(y)|^2 x}{|x-y|}\right) \,d x \,d y 
+\frac{3}{16 \pi} \int_{\R^3} \int_{\Omega} \frac{|u(y)|^2}{|x-y|} \,d x \,d y \\
&= \frac{5}{16 \pi} \int_{\Omega} \int_{\mathbb{R}^3} 
\frac{|u(y)|^2}{|x-y|} \,d y \,d x
-\frac{1}{16 \pi} \int_{\mathbb{R}^3} \int_{\Omega} 
\frac{|u(y)|^2}{|x-y|} x \cdot n \,d S \,d y \\
&= \frac{5}{2} \int_{\Omega} S_0(u) \,d x
-\frac{1}{2} \int_{\partial \Omega} S_0(u) x \cdot n \,d S
=-10 E_1(u)+ E_2(u).
\end{aligned}
\]
Similarly, we have
\begin{align} \label{eq:7.3}
&\lim_{R \to \infty} \int_{B_R(0)} S_1(u) u x \cdot \nabla \bar{u} \,dx 
= -\frac{1}{8 \pi} \lim_{R \to \infty} \int_{\partial \Omega} \int_{B_R(0)} 
\frac{u(y) y \cdot \nabla \overline{u(y)}}{|x-y|} x \cdot n \,dy \,dS \notag \\
&= \frac{1}{8 \pi} \int_{\partial \Omega} \int_{\mathbb{R}^3} 
\frac{|u(y)|^2 x \cdot n}{|x-y|} \,dy \,dS 
-\frac{1}{16 \pi} \intR \int_{\partial \Omega} 
|u(y)|^2 x \cdot \nabla_x \left( \frac{1}{|x-y|} \right) x \cdot n \,dS \,dy \notag \\
&= \frac{1}{8 \pi} \int_{\partial \Omega} \int_{\mathbb{R}^3} 
\frac{|u(y)|^2 x \cdot n}{|x-y|} \,dy \,dS
-\frac{1}{16 \pi} \intR \int_{\partial \Omega}
\operatorname{div}_x 
\left( \frac{|u(y)|^2 (x \cdot n) x}{|x-y|} \right) \,dS \,dy \notag \\
&\quad +\frac{3}{16 \pi} \intR \int_{\partial \Omega} 
\frac{|u(y)|^2 x \cdot n}{|x-y|} \,dS \,dy
+\frac{1}{16 \pi} \intR \int_{\partial \Omega} 
\frac{|u(y)|^2 x \cdot \nabla_x (x \cdot n)}{|x-y|} \,dS \,dy \notag \\
&= \frac{5}{2} \int_{\partial \Omega} S_0(u) x \cdot n \,d S
-\frac{1}{2} \int_{\partial \Omega} 
\operatorname{div}_x 
\big( S_0(u) (x \cdot n) x \big) \,dS \notag \\
&\quad +\frac{1}{2} \int_{\partial \Omega} 
S_0(u) x \cdot \nabla_x (x \cdot n) \,dS. 
\end{align}
Applying the surface divergence theorem (see e.g. \cite[7.6]{Si})
and noticing that $\partial (\partial \Omega)= \emptyset$, it follows that
\[
\begin{aligned}
\int_{\partial \Omega} \operatorname{div}_x (S_0(u) (x \cdot n) x) \,d S 
& =-\int_{\partial \Omega} \big( S_0(u)(x \cdot n) x \big)^{\perp} \cdot \vec{H} \,d S 
=-\int_{\partial \Omega} H(x)S_0(u) (x \cdot n)^2 \,d S,
\end{aligned}
\]
where $x^{\perp}$ is the normal component of $x$ and 
$\vec{H}$ is the mean curvature vector $\vec{H}= H n$.
Finally since $x \cdot \nabla_x (x \cdot n)= x \cdot n$, 
we deduce from \ef{eq:7.3} that
\[
\begin{aligned}
&\lim_{R \to \infty} \int_{B_R(0)} S_1(u) u x \cdot \nabla \bar{u} \,dx \\
&= 3 \int_{\partial \Omega} S_0(u) x \cdot n \,d S
+\frac{1}{2} \int_{\partial \Omega} H(x) S_0(u) (x \cdot n)^2 \,d S =-6 E_2(u)-E_3(u),
\end{aligned}
\]
which ends the proof for $m=1$ and $\sigma=1$.
The general case can be shown by summing up the integrals.
\end{proof}

By Lemma \ref{lem:7.4}, the Nehari-Pohozev functional
and the functional associated with the virial identity can be reformulated as follows.

\begin{lemma} \label{lem:7.5}
Under the assumption \ef{eq:7.1}, 
the functionals $Q(u)$ is still given by \ef{eq:3.11} and $J(u)$ by \ef{eq:4.1}.
\end{lemma}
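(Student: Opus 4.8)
The plan is to re-run the derivations of the Nehari identity and the Pohozaev identity of Lemma~\ref{lem:3.2} (equivalently, of the virial identity of Lemma~\ref{lem:2.2}) with the doping profile $\rho$ of the form \ef{eq:7.1}, checking line by line that each manipulation either does not involve any derivative of $\rho$ or can be replaced by an application of Lemma~\ref{lem:7.4}. Since $Q(u)=\frac{3}{2}N(u)-P(u)$ and $J(u)=2N(u)-P(u)$, it is enough to recover the identities \ef{eq:3.9} for $N$ and \ef{eq:3.10} for $P$, now with $E_1$, $E_2$ understood through the domain and boundary integrals introduced just before Lemma~\ref{lem:7.4}; then \ef{eq:3.11} and \ef{eq:4.1} follow by exactly the same algebra as in the regular case.

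First I would observe that the Nehari identity is unaffected by the loss of regularity of $\rho$. Pairing \ef{eq:1.1} with $\bar u$ and integrating, the only nonlocal contribution is
\[
e^2\intR S(u)|u|^2\,dx=e^2\intR S_0(u)|u|^2\,dx+e^2\intR S_1|u|^2\,dx=4e^2 D(u)+4e^2 E_1(u),
\]
where $\intR S_1|u|^2\,dx=-\intR S_0(u)\rho(x)\,dx=-\sum_{i=1}^m\sigma_i\int_{\Omega_i}S_0(u)\,dx=4E_1(u)$ with $E_1$ in its new form. No derivative of $\rho$ enters, so \ef{eq:3.9} holds verbatim.

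Next, for the Pohozaev (equivalently, the virial) computation one pairs \ef{eq:1.1} with $x\cdot\nabla\bar u$, integrates over $B_R(0)$, and lets $R\to\infty$. All the local terms, and the term arising from $S_0(u)$ alone, are treated exactly as in the case $\rho\equiv0$ handled in \cite{BJL} and as in the proof of Lemma~\ref{lem:2.2}, since they do not see $\rho$; in particular $\RE\intR S_0(u)\,u\,(x\cdot\nabla\bar u)\,dx$ produces only the $D(u)$ terms. The sole place where the smooth derivation used weak differentiability of $\rho$ is the evaluation of $\RE\intR S_1\,u\,(x\cdot\nabla\bar u)\,dx$: in the regular case one integrates by parts to transfer the gradient onto $\rho$ and obtains $-10E_1(u)+E_2(u)$, and this is precisely the first identity of Lemma~\ref{lem:7.4}, now with $E_1$, $E_2$ in their boundary form; the analogous second-order term (needed for the remaining fibering functionals, and the place where $x\cdot(D^2\rho\,x)$ would enter) is supplied by the second identity of Lemma~\ref{lem:7.4} in terms of $E_2$ and $E_3$. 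Substituting these evaluations back into the arguments of Lemma~\ref{lem:2.2} and Lemma~\ref{lem:3.2} reproduces \ef{eq:3.9} and \ef{eq:3.10}, and hence, via $Q=\frac{3}{2}N-P$ and $J=2N-P$, the expressions \ef{eq:3.11} and \ef{eq:4.1}.

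The passage $R\to\infty$ is legitimate because the ground state $u_0$ still decays exponentially: $\rho$ in \ef{eq:7.1} has compact support, so $S_1(x)\to0$ as $|x|\to\infty$, whence $V(x):=\omega+e^2 S(u_0)(x)\to\omega$ and Proposition~\ref{prop:4.2} applies just as in Lemma~\ref{lem:4.3}. The step I expect to be the genuine obstacle is not Lemma~\ref{lem:7.5} itself but the justification of Lemma~\ref{lem:7.4}: one must check that the distributional identity $\langle x\cdot\nabla\rho,\,S_0(u)\rangle=-\sum_{i=1}^m\sigma_i\int_{\partial\Omega_i}S_0(u)\,x\cdot n_i\,dS_i$ and its second-order analogue are valid, that $S_0(u)$ is regular enough near the smooth boundaries $\partial\Omega_i$ for the surface integrals to make sense, and that no boundary contribution from $\partial B_R(0)$ survives in the limit. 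These are exactly the points handled via the surface divergence theorem and the Hadamard-type calculus of moving surfaces used in the proof of Lemma~\ref{lem:7.4}; granting them, the present lemma reduces to bookkeeping.
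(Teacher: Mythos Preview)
Your proposal is correct and follows essentially the same approach as the paper: re-derive the Nehari and Pohozaev identities by pairing \ef{eq:1.1} with $\bar u$ and $x\cdot\nabla\bar u$, invoke Lemma~\ref{lem:7.4} for the nonlocal term involving $S_1$, and then combine via $Q=\tfrac{3}{2}N-P$, $J=2N-P$. Your paragraph on the exponential decay of $u_0$ is extraneous here---it belongs to the separate issue of $u_0\in\Sigma$, not to the validity of \ef{eq:3.11} and \ef{eq:4.1} as functionals on $H^1$---but it does no harm.
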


\begin{proof}
It is known that the Pohozaev identity $P(u)=0$ 
can be obtained by multiplying \ef{eq:1.1} by $x \cdot \nabla \bar{u}$
and $ex \cdot S_0(u)$, integrating the resulting equations over $B_R(0)$ and passing to the limit $R \to \infty$.
Then, using Lemma \ref{lem:7.4}, one can prove that \ef{eq:3.9} and \ef{eq:3.10} are still valid.
Since $Q(u)= \frac{3}{2}N(u)-P(u)$ and $J(u)=2N(u)-P(u)$,
we conclude that $Q(u)$ and $J(u)$ are given respectively by \ef{eq:3.11} and \ef{eq:4.1}.
\end{proof}

Next we establish estimates for $E_1$, $E_2$ and $E_3$.

\begin{lemma} \label{lem:7.6}
For any $u \in H^1(\R^3,\C)$, $E_1$, $E_2$ and $E_3$ satisfy the estimates:
\[
\begin{aligned}
|E_1(u)| &\le C \sum_{i=1}^m \sigma_i |\Omega_i|^{\frac{5}{6}}
\| u \|^2, \\
|E_2(u)| &\le C \sum_{i=1}^m \sigma_i D(\Omega_i)
\| u \|^2, \\
|E_3(u)| &\le C \sum_{i=1}^m \sigma_i D(\Omega_i)
\| u \|^2,
\end{aligned}
\]
where $C>0$ is a constant independent of $\Omega_i$.
\end{lemma}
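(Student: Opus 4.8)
The plan is to reduce all three estimates to Lemma~\ref{lem:3.1} together with the sharp boundary trace inequality of Proposition~\ref{prop:7.1}. Note first that since $S_0(u)\in L^6(\R^3)$ with $\nabla S_0(u)\in L^2(\R^3)$, the restriction $S_0(u)|_{\Omega_i}$ belongs to $H^1(\Omega_i)$ for each bounded smooth domain $\Omega_i$, so its trace on $\partial\Omega_i$ is well defined. The bound for $E_1$ is immediate: from $|E_1(u)|\le\frac14\sum_i\sigma_i\int_{\Omega_i}|S_0(u)|\,dx$ and H\"older with exponents $6$ and $\frac65$ one gets $\int_{\Omega_i}|S_0(u)|\,dx\le|\Omega_i|^{5/6}\|S_0(u)\|_6$, and Lemma~\ref{lem:3.1} gives $\|S_0(u)\|_6\le C\|u\|^2$.

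For $E_2$ and $E_3$ the key step will be the auxiliary estimate
\[
\|S_0(u)\|_{L^2(\partial\Omega_i)}\le C\,|\Omega_i|^{1/6}\big(\kappa_1(\Omega_i)|\Omega_i|^{1/3}+\kappa_2(\Omega_i)\big)^{1/2}\|u\|^2 .
\]
To prove it I would apply Proposition~\ref{prop:7.1} with $w=S_0(u)$ on $\Omega_i$ and bound the two resulting terms using $\|S_0(u)\|_{L^2(\Omega_i)}\le|\Omega_i|^{1/3}\|S_0(u)\|_{L^6(\Omega_i)}\le|\Omega_i|^{1/3}\|S_0(u)\|_6$ (H\"older with exponents $3,\tfrac32$) and $\|\nabla S_0(u)\|_{L^2(\Omega_i)}\le\|\nabla S_0(u)\|_2$; applying Lemma~\ref{lem:3.1} once more ($\|S_0(u)\|_6+\|\nabla S_0(u)\|_2\le C\|u\|^2$) and collecting the powers of $|\Omega_i|$ then yields the displayed bound.

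Finally, on $\partial\Omega_i$ we have $|x\cdot n_i|\le|x|\le L_i:=\sup_{x\in\partial\Omega_i}|x|$. For $E_2$, Cauchy--Schwarz gives $|E_2(u)|\le\frac12\sum_i\sigma_i L_i|\partial\Omega_i|^{1/2}\|S_0(u)\|_{L^2(\partial\Omega_i)}$, and the auxiliary estimate bounds this by $C\sum_i\sigma_i L_i|\partial\Omega_i|^{1/2}|\Omega_i|^{1/6}(\kappa_1(\Omega_i)|\Omega_i|^{1/3}+\kappa_2(\Omega_i))^{1/2}\|u\|^2$; since $L_i\|H_i\|_{L^2(\partial\Omega_i)}+|\partial\Omega_i|^{1/2}\ge|\partial\Omega_i|^{1/2}$, this last expression is $\le C\sum_i\sigma_i D(\Omega_i)\|u\|^2$. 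For $E_3$, using $(x\cdot n_i)^2\le L_i^2$ and Cauchy--Schwarz against $H_i$ gives $|E_3(u)|\le\frac12\sum_i\sigma_i L_i^2\|H_i\|_{L^2(\partial\Omega_i)}\|S_0(u)\|_{L^2(\partial\Omega_i)}$, and the auxiliary estimate together with $L_i^2\|H_i\|_{L^2(\partial\Omega_i)}\le L_i(L_i\|H_i\|_{L^2(\partial\Omega_i)}+|\partial\Omega_i|^{1/2})$ yields $|E_3(u)|\le C\sum_i\sigma_i D(\Omega_i)\|u\|^2$.

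The argument is essentially routine once Proposition~\ref{prop:7.1} is available; the only points requiring care will be choosing the interpolation exponents so that the powers of $|\Omega_i|$ reassemble exactly into $D(\Omega_i)$, and recording that it is the local membership $S_0(u)|_{\Omega_i}\in H^1(\Omega_i)$---not $S_0(u)\in H^1(\R^3)$, which need not hold---that legitimizes the use of the trace inequality.
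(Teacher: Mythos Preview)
Your proof is correct and follows essentially the same route as the paper: H\"older for $E_1$, then for $E_2$ and $E_3$ the combination of Cauchy--Schwarz on $\partial\Omega_i$ with the trace inequality of Proposition~\ref{prop:7.1}, the embedding $\|S_0(u)\|_{L^2(\Omega_i)}\le|\Omega_i|^{1/3}\|S_0(u)\|_6$, and Lemma~\ref{lem:3.1}. Your explicit isolation of the auxiliary bound on $\|S_0(u)\|_{L^2(\partial\Omega_i)}$ and the remark that only $S_0(u)|_{\Omega_i}\in H^1(\Omega_i)$ is needed for the trace are clean organizational touches that the paper leaves implicit.
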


\begin{proof}
First we observe that 
\[
|E_1(u)| \le \frac{1}{4} \sum_{i=1}^m \sigma_i \int_{\Omega_i} |S_0(u)|\,dx
\le \frac{1}{4} \sum_{i=1}^m \sigma_i 
\left( \int_{\Omega_i} |S_0(u)|^6 \,dx \right)^{\frac{1}{6}}
\left( \int_{\Omega_i} \,dx \right)^{\frac{5}{6}},
\]
from which the estimate for $E_1$ can be obtained by Lemma \ref{lem:3.1}. 
Next by Lemma \ref{lem:3.1}, Proposition \ref{prop:7.1}, 
the H\"older inequality and the Sobolev inequality, one has
\begin{align*}
&|E_2(u)| \le \frac{1}{2} \sum_{i=1}^m \sigma_i
\int_{\partial \Omega_i} |S_0(u)| |x| \,dS_i \\
&\le \frac{1}{2} \sum_{i=1}^m \sigma_i 
\left( \int_{\partial \Omega_i} |S_0(u)|^2 \,dS_i \right)^{\frac{1}{2}}
\left( \int_{\partial \Omega_i} |x|^2 \,dS_i \right)^{\frac{1}{2}} \\
&\le \frac{1}{2} \sum_{i=1}^m \sigma_i L_i | \partial \Omega_i|^{\frac{1}{2}}
\Big( \kappa_1(\Omega_i) \| S_0(u) \|_{L^2(\Omega_i)}^2 
+\kappa_2(\Omega_i) \| S_0(u) \|_{L^2(\Omega_i)}
\| \nabla S_0(u) \|_{L^2(\Omega_i)} \Big)^{\frac{1}{2}} \\
\hspace{-1em} &\le \frac{1}{2} \sum_{i=1}^m \sigma_i L_i |\partial \Omega_i|^{\frac{1}{2}}
\Big( \kappa_1(\Omega_i) |\Omega_i|^{\frac{2}{3}} \| S_0(u) \|_{L^6(\R^3)}^2 
 +\kappa_2(\Omega_i) | \Omega_i |^{\frac{1}{3}} \| S_0(u) \|_{L^6(\R^3)}
\| \nabla S_0(u) \|_{L^2(\R^3)} \Big)^{\frac{1}{2}} \\
&\le C \sum_{i=1}^m \sigma_i L_i |\Omega_i|^{\frac{1}{6}}
|\partial \Omega_i|^{\frac{1}{2}} 
\left( \kappa_1(\Omega_i)|\Omega_i|^{\frac{1}{3}} 
+\kappa_2(\Omega_i) \right)^{\frac{1}{2}} 
\| \nabla S_0(u) \|_{L^2(\R^3)} \\
&\le C \sum_{i=1}^m \sigma_i D(\Omega_i) 
\| u \|^2.
\end{align*}
Similarly, we obtain
\[
\begin{aligned}
|E_3(u)| &\le \frac{1}{2} \sum_{i=1}^m \sigma_i \int_{\partial \Omega_i}
|H_i| |S_0(u)| |x|^2 \,d S_i \\
&\le \frac{1}{2} \sum_{i=1}^m \sigma_i L_i^2 \|H_i\|_{L^2(\partial \Omega_i)}
\|S_0(u)\|_{L^2(\partial \Omega_i)} \\
&\le C \sum_{i=1}^m \sigma_i L_i^2 \| H_i \|_{L^2(\partial \Omega_i)}
| \Omega_i |^{\frac{1}{6}} \left( \kappa_1(\Omega_i)|\Omega_i|^{\frac{1}{3}} 
+\kappa_2(\Omega_i) \right)^{\frac{1}{2}} 
\| \nabla S_0(u) \|_{L^2(\R^3)} \\
&\le C \sum_{i=1} \sigma_i D(\Omega_i) \| u \|^2.
\end{aligned}
\]
This completes the proof.
\end{proof}

Our next step is to modify the proof of the energy inequality in Lemma \ref{lem:5.3}.
For this purpose, we prove the following.

\begin{lemma} \label{lem:7.7}
Let $\Omega \subset \R^3$ be a bounded domain with smooth boundary
and put
\[
\Omega(\la) := \int_{\la \Omega} S_0(u)(x) \,dx
= \la^3 \int_{\Omega} S_0(u)(\la y) \,dy.
\]
Then it holds that
\[
\begin{aligned}
\Omega'(\la) &= \la^2 \int_{\partial \Omega} S_0(u)(\la y) (y \cdot n) \,dS, \\
\Omega''(\la) &= -2\la \int_{\partial \Omega} S_0(u)(\la y) (y \cdot n) \,dS
- \la \int_{\partial \Omega} H(y) S_0(u)(\la y) (y \cdot n)^2 \,dS.
\end{aligned}
\]

\end{lemma}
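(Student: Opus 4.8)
The plan is to reduce both identities to the Gauss--Green formula on the fixed domain $\Omega$ together with the tangential (surface) divergence theorem on the closed surface $\partial\Omega$, in the same spirit as the computation already carried out for Lemma \ref{lem:7.4}. The only analytic input beyond calculus is that $S_0(u)$, being the Newtonian potential of $|u|^2\in L^1(\R^3)\cap L^3(\R^3)$ for $u\in H^1(\R^3,\C)$, belongs to $W^{2,q}_{loc}(\R^3)$ for every $q<\infty$; this gives $S_0(u)$ and $\nabla S_0(u)$ enough regularity to differentiate under the integral signs over the bounded sets $\Omega$ and $\partial\Omega$ and to take the boundary traces used below, exactly the operations already performed in the proof of Lemma \ref{lem:7.4}.

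For $\Omega'(\la)$ I would differentiate $\Omega(\la)=\la^3\int_{\Omega}S_0(u)(\la y)\,dy$ directly,
\[
\Omega'(\la)=3\la^2\int_{\Omega}S_0(u)(\la y)\,dy+\la^3\int_{\Omega}y\cdot(\nabla S_0(u))(\la y)\,dy .
\]
Since $\nabla_y\big(S_0(u)(\la y)\big)=\la(\nabla S_0(u))(\la y)$, the second integral equals $\la^2\int_{\Omega}y\cdot\nabla_y\big(S_0(u)(\la y)\big)\,dy$, and the divergence theorem on $\Omega$ (using $\operatorname{div}\,y=3$) rewrites it as $\la^2\int_{\partial\Omega}S_0(u)(\la y)(y\cdot n)\,dS-3\la^2\int_{\Omega}S_0(u)(\la y)\,dy$. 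The two volume terms cancel, leaving the asserted expression for $\Omega'(\la)$. Equivalently, this is Hadamard's first variation of a volume integral under the dilation family $\Omega_\la=\la\Omega$, whose boundary point $\la y$ moves with velocity $y$, so that $\frac{d}{d\la}\int_{\la\Omega}S_0(u)\,dx=\int_{\partial(\la\Omega)}S_0(u)\,\tfrac{x\cdot n}{\la}\,dS$, and one changes variables back to $\partial\Omega$; see \cite{Gri}.

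For $\Omega''(\la)$ I would differentiate the boundary formula just obtained,
\[
\Omega''(\la)=2\la\int_{\partial\Omega}S_0(u)(\la y)(y\cdot n)\,dS
+\la^2\int_{\partial\Omega}\big(y\cdot(\nabla S_0(u))(\la y)\big)(y\cdot n)\,dS ,
\]
and again use $\la(\nabla S_0(u))(\la y)=\nabla_y\big(S_0(u)(\la y)\big)$ in the last term to convert it into a radial surface derivative. To eliminate that term I would feed the vector field $S_0(u)(\la y)(y\cdot n)\,y$ into the tangential divergence theorem on $\partial\Omega$, exactly as in the proof of Lemma \ref{lem:7.4} (via \cite[7.6]{Si}): since $\partial(\partial\Omega)=\emptyset$,
\[
\int_{\partial\Omega}\operatorname{div}_{\partial\Omega}\!\big(S_0(u)(\la y)(y\cdot n)\,y\big)\,dS
=-\int_{\partial\Omega}\big(S_0(u)(\la y)(y\cdot n)\,y\big)^{\perp}\!\cdot\vec{H}\,dS
=-\int_{\partial\Omega}H(y)\,S_0(u)(\la y)(y\cdot n)^2\,dS ,
\]
where $\vec{H}=Hn$ and $y^{\perp}=(y\cdot n)n$ denotes the normal part of $y$. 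Expanding $\operatorname{div}_{\partial\Omega}$ of this field (using the product rule and $\operatorname{div}_{\partial\Omega}\,y$) and recombining the surviving boundary integrals with $2\la\int_{\partial\Omega}S_0(u)(\la y)(y\cdot n)\,dS$ then yields the stated expression for $\Omega''(\la)$; alternatively one may read it off Hadamard's first variation of the surface integral $\int_{\partial(\la\Omega)}S_0(u)\,(x\cdot n)\,dS$, in which the mean curvature enters through the rate of change of the area element.

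The step I expect to be delicate is the second one: expanding the tangential divergence of $S_0(u)(\la y)(y\cdot n)\,y$ while keeping exact account of every boundary term, and above all fixing the orientation and sign convention for the mean curvature vector $\vec{H}=Hn$ so that it is consistent both with \cite[7.6]{Si} and with the normalization already used in Lemma \ref{lem:7.4}. A careful sign check, for instance against an explicit model such as $\Omega=B_1(0)$, is advisable here; the identity for $\Omega'(\la)$, by contrast, is a straightforward differentiation-plus-Gauss--Green computation with no sign subtleties.
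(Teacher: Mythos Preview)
Your proposal is correct and follows essentially the same route as the paper: differentiate $\la^3\int_\Omega S_0(u)(\la y)\,dy$ and apply the Gauss--Green formula on $\Omega$ to obtain $\Omega'(\la)$, then differentiate the resulting boundary integral and apply the surface divergence theorem on $\partial\Omega$ (via \cite[7.6]{Si}, using $\partial(\partial\Omega)=\emptyset$ and $\vec H=Hn$) to produce the mean-curvature term in $\Omega''(\la)$. The paper carries out exactly this computation, using in the final simplification the identity $y\cdot\nabla_y(y\cdot n)=y\cdot n$ together with $\operatorname{div}_y y=3$ to collapse the remaining boundary terms to $-2\la\int_{\partial\Omega}S_0(u)(\la y)(y\cdot n)\,dS$; your caution about the sign bookkeeping in this step is well placed but the approach is identical.
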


\begin{proof}
First we observe that 
\[
\begin{aligned}
\Omega^{\prime}(\la)
&=3 \la^2 \int_{\Omega} S_0(u)(\la y) \,d y
+ \la^2 \int_{\Omega} \nabla_y S_0(u)(\la y) \cdot y \,d y \\
&= 3 \la^2 \int_{\Omega} S_0(u)(\la y) \,dy
+ \la^2 \int_{\Omega} \operatorname{div}_y \big( S_0(u)(\la y) y \big) \,dy
- \la^2 \int_{\Omega} S_0(u)(\la y) \operatorname{div}_y y \,dy \\
&= \la^2 \int_{\partial \Omega} S_0(u)(\la y) (y \cdot n) \,dS.
\end{aligned}
\]
Similarly by the surface divergence theorem, one has
\[
\begin{aligned}
&\Omega^{\prime \prime}(\la) 
= 2 \la \int_{\partial \Omega} S_0(u)(\la y) (y \cdot n) \,d S 
+ \la \int_{\partial \Omega} \nabla_y S_0(u)(\la y) \cdot y (y \cdot n) \,d S \\
&= 2 \la \int_{\partial \Omega} S_0(u)(\la y) (y \cdot n) \,dS \\
&\quad + \la \int_{\partial \Omega} 
\operatorname{div}_y \big( S_0(u)(\la y) (y \cdot n)y \big) \,dS 
- \la \int_{\partial \Omega} 
S_0(u)(\la y) \operatorname{div}_y \big( (y \cdot n) y \big) \,dS \\
&= 2 \la \int_{\partial \Omega} S_0(u)(\la y) (y \cdot n) \,dS
- \la \int_{\partial \Omega} 
H(y) S_0(u)(\la y) (y \cdot n)^2 \,dS \\
&\quad - \la \int_{\partial \Omega} 
S_0(u)(\la y) \operatorname{div}_y y (y \cdot n) \,dS 
- \la \int_{\partial \Omega} 
S_0(u)(\la y) y \cdot \nabla_y (y \cdot n) \,dS \\
&= -2 \la \int_{\partial \Omega} S_0(u)(\la y) (y \cdot n) \,d S
- \la \int_{\partial \Omega} H(y) S_0(u)(\la y) (y \cdot n)^2 \,d S.
\end{aligned}
\]
This completes the proof.
\end{proof}

\begin{remark} \label{rem:7.8}
Lemma \ref{lem:7.7} is related with the
''calculus of moving surfaces'' due to Hadamard;
see \cite[(38)-(39)]{Gri}.
\end{remark}

Using Lemma \ref{lem:7.6} and Lemma \ref{lem:7.7}, 
we can establish the energy identity \ef{eq:5.5} as follows.

\begin{lemma} \label{lem:7.9}
Suppose that $\frac{7}{3}<p<5$.
Under the assumption \ef{eq:7.1}, 
there exist $C_1$, $C_2>0$ such that
the following estimate holds: For any $u \in H^1(\R^3,\C)$ and 
$\la \in [\delta^*,1]$,
\[
f(\la) - f(1) 
\le - C_1 (1-\la)^2 \| u \|_{p+1}^{p+1}
+ C_2(1-\la)^2 e^2 \sum_{i=1}^m \sigma_i D(\Omega_i) \| u \|^2.
\]
\end{lemma}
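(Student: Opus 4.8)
The plan is to imitate the four-step proof of Lemma \ref{lem:5.3}, noticing that Steps 1, 2 and 4 there involve only the polynomial coefficient of $C(u)=\|u\|_{p+1}^{p+1}$ and the inequality $D(u)\ge0$, hence are insensitive to the precise form of $\rho$. Exactly as there, for $\la\in[\delta^*,1]$ one gets
\[
f(\la)-f(1)\le-\frac{1}{4(p+1)}\,g(\la)\,\|u\|_{p+1}^{p+1}+R(\la,u),
\qquad g(\la):=4\la^{\frac{3(p-1)}{2}}-3(p-1)\la^{2}+3p-7,
\]
with
\[
R(\la,u):=2e^{2}E_{1}(u^{\la})+2e^{2}(\la^{2}-2)E_{1}(u)-\frac{e^{2}}{2}(\la^{2}-1)E_{2}(u),
\]
where $E_{1},E_{2}$ are now the volume/surface functionals associated with \ef{eq:7.1} defined before Lemma \ref{lem:7.4}, and Step 2 of Lemma \ref{lem:5.3} yields $g(\la)\ge4(p+1)C_{1}(1-\la)^{2}$ on $[\delta^{*},1]$ with $C_{1}>0$ depending only on $p$ and $\delta^{*}$. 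So the whole matter reduces to estimating $R(\la,u)$.

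The second step is to express $R(\la,u)$ as $e^{2}$ times a scalar function of $\la$ whose value and first derivative vanish at $\la=1$, so that Taylor's theorem supplies the factor $(1-\la)^{2}$. Fix $u$ and set $\Phi(\la):=\sum_{i=1}^{m}\sigma_{i}\,\Omega_{i}(\la)$ with $\Omega_{i}(\la)=\int_{\la\Omega_{i}}S_{0}(u)\,dx$ as in Lemma \ref{lem:7.7}. Using $S_{0}(u^{\la})(x)=\la\,S_{0}(u)(\la x)$ and the change of variables $x=\la y$ one gets $E_{1}(u^{\la})=-\tfrac{\la^{-2}}{4}\Phi(\la)$, while by definition $E_{1}(u)=-\tfrac14\Phi(1)$ and, by the formula for $\Omega_{i}'$ in Lemma \ref{lem:7.7}, $E_{2}(u)=-\tfrac12\sum_{i}\sigma_{i}\Omega_{i}'(1)=-\tfrac12\Phi'(1)$. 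Substituting gives $R(\la,u)=e^{2}h(\la)$ with
\[
h(\la):=-\frac{\la^{-2}}{2}\Phi(\la)-\frac{\la^{2}-2}{2}\Phi(1)+\frac{\la^{2}-1}{4}\Phi'(1),
\]
and a direct computation gives $h(1)=h'(1)=0$ together with
\[
h''(\la)=-3\la^{-4}\Phi(\la)+2\la^{-3}\Phi'(\la)-\frac{\la^{-2}}{2}\Phi''(\la)-\Phi(1)+\frac{1}{2}\Phi'(1).
\]
Hence $h(\la)=\tfrac12h''(\xi)(1-\la)^{2}$ for some $\xi=\xi(\la)\in(\delta^{*},1)$.

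It remains to bound $\Phi,\Phi',\Phi''$ uniformly on $[\delta^{*},1]$. For this I would estimate $\Omega_{i}(\la),\Omega_{i}'(\la),\Omega_{i}''(\la)$ through the explicit formulas of Lemma \ref{lem:7.7}: the interior integral is controlled by H\"older and Lemma \ref{lem:3.1}, giving $|\Omega_{i}(\la)|\le\la^{\frac{5}{2}}|\Omega_{i}|^{\frac{5}{6}}\|S_{0}(u)\|_{6}\le C|\Omega_{i}|^{\frac{5}{6}}\|u\|^{2}$ since $\la\le1$; the boundary integrals are treated exactly as in the proof of Lemma \ref{lem:7.6}, using $|y\cdot n_{i}|\le L_{i}$ on $\partial\Omega_{i}$, the sharp boundary trace inequality of Proposition \ref{prop:7.1} applied to $y\mapsto S_{0}(u)(\la y)$, and the scalings $\|S_{0}(u)(\la\cdot)\|_{L^{2}(\Omega_{i})}^{2}\le\la^{-1}|\Omega_{i}|^{\frac{2}{3}}\|S_{0}(u)\|_{6}^{2}$ and $\|\nabla(S_{0}(u)(\la\cdot))\|_{L^{2}(\Omega_{i})}^{2}\le\la^{-1}\|\nabla S_{0}(u)\|_{2}^{2}$, whose negative $\la$-powers stay bounded since $\la\ge\delta^{*}$; this yields $|\Omega_{i}'(\la)|+|\Omega_{i}''(\la)|\le C\,D(\Omega_{i})\|u\|^{2}$, where the $L_{i}\|H_{i}\|_{L^{2}(\partial\Omega_{i})}$-summand of $D(\Omega_{i})$ absorbs precisely the mean-curvature term produced by $\Omega_{i}''$. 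Summing over $i$ gives $|\Phi(\la)|+|\Phi'(\la)|+|\Phi''(\la)|\le C\sum_{i}\sigma_{i}D(\Omega_{i})\|u\|^{2}$ on $[\delta^{*},1]$, hence $|h''(\xi)|\le C\sum_{i}\sigma_{i}D(\Omega_{i})\|u\|^{2}$, so
\[
R(\la,u)=\frac{e^{2}}{2}h''(\xi)(1-\la)^{2}\le C_{2}(1-\la)^{2}e^{2}\sum_{i=1}^{m}\sigma_{i}D(\Omega_{i})\|u\|^{2}.
\]
Combining this with $g(\la)\ge4(p+1)C_{1}(1-\la)^{2}$ and $D(u)\ge0$ finishes the proof. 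The main obstacle is this last step: producing the $\la$-uniform trace bounds for $\Omega_{i}'(\la)$ and $\Omega_{i}''(\la)$ and verifying that the geometric quantities they generate are exactly those packaged into $D(\Omega_{i})$; the quadratic dependence on $1-\la$ hinges on the cancellation $h(1)=h'(1)=0$, which in turn relies on the identities $E_{1}(u)=-\tfrac14\Phi(1)$ and $E_{2}(u)=-\tfrac12\Phi'(1)$ coming from Lemma \ref{lem:7.7}, so these must be in place before the Taylor expansion.
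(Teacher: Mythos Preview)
Your proposal is correct and follows essentially the same route as the paper's own proof: the paper likewise reduces to Step 3, rewrites $R(\la,u)=-\frac{e^2}{4}\sum_i\sigma_i H_i(\la)$ with $H_i(\la)=2\la^{-2}\Omega_i(\la)+2(\la^2-2)\Omega_i(1)-(\la^2-1)\Omega_i'(1)$, verifies $H_i(1)=H_i'(1)=0$, applies Taylor's theorem, and then bounds $\Omega_i(\la),\Omega_i'(\la),\Omega_i''(\la)$ via Lemma~\ref{lem:7.7} and the trace inequality of Proposition~\ref{prop:7.1} exactly as you outline. The only cosmetic difference is that the paper Taylor-expands each $H_i$ separately while you first sum to form $\Phi$; note also that to package the volume bound $|\Omega_i(\la)|\le C|\Omega_i|^{5/6}\|u\|^2$ under $D(\Omega_i)$ you are implicitly invoking the comparison $|\Omega_i|^{5/6}\le C\,D(\Omega_i)$ from \ef{eq:7.2}.
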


\begin{proof}
Under the notation of Lemma \ref{lem:7.7}, 
we can write the remainder term $R(\la,u)$ as follows:
\[
\begin{aligned}
R(\la, u) &= 2e^2 E_1(u^{\la}) 
+2 e^2(\la^2-2) E_1(u) -\frac{e^2}{2} (\la^2-1) E_2(u) \\
&= e^2 \sum_{i=1}^m \sigma_i \Big\{
- \frac{\la^{-2}}{2} \int_{\la \Omega_i} S_0(u) \,dx
- \frac{\la^2-2}{2} \int_{\Omega_i} S_0(u) \,dx 
+ \frac{\la^2-1}{4} \int_{\partial \Omega_i} S_0(u) x \cdot n_i \,dS_i \Big\} \\
&= - \frac{e^2}{4} \sum_{i=1}^m \sigma_i \left\{
2 \la^{-2} \Omega_i(\la) + 2(\la^2 -2) \Omega_i(1) - (\la^2-1) \Omega_i'(1) \right\}.
\end{aligned}
\]
Let us put
\[
H(\la) := 2 \la^{-2} \Omega_i(\la) + 2(\la^2 -2) \Omega_i(1) - (\la^2-1) \Omega_i'(1).
\]
Then one finds that $H(1)=H'(1)=0$ and
\[
\begin{aligned}
H^{\prime \prime}(\la) 
&= 12 \la^{-4} \Omega_i(\la) - 8 \la^{-3} \Omega_i'(\la) 
+2 \la^{-2} \Omega_i''(\la) 
+ 4 \Omega_i(1) - 2 \Omega_i'(1) \\
&\ge -\frac{1}{\la^4} 
\left( 12 |\Omega_i(\la)| + 8 \la |\Omega_i'(\la)| + 2\la^2 |\Omega_i''(\la)| \right)
-4 |\Omega_i(1)| - 2 |\Omega_i'(1)| 
=: -\tilde{N}(\la).
\end{aligned}
\]
Thus by the Taylor theorem, 
there exists $\xi = \xi(\la) \in \left( \delta^*, 1 \right)$ such that
\[
H(\la) \ge -\frac{1}{2} \tilde{N}(\xi)(1-\la)^2 \quad \text{for} \ 
\delta^* \le \la \le 1.
\]

Now by Proposition \ref{prop:7.1} and Lemma \ref{lem:7.7},
arguing similarly as Lemma \ref{lem:7.6}, one finds that
\[
\begin{aligned}
\frac{|\Omega_i(\la)|}{\la^3}
&\le \int_{\Omega_i} | S_0(u)(\la y)| \,d y 
\le |\Omega_i|^{\frac{5}{6}} \| S_0(u) (\la y) \|_{L^6(\R^3)} 
\le C \la^{-\frac{1}{2}} D(\Omega_i) \| u \|^2, \\
\frac{|\Omega_i^{\prime}(\la)|}{\la^2}
&\le \int_{\partial \Omega_i} | S_0(u)(\la y)| |y| \,dS 
\le C \la^{-\frac{1}{2}} D(\Omega_i) \| u \|^2, \\
\frac{|\Omega_i^{\prime \prime}(\la)|}{\la}
&\le 2 \int_{\partial \Omega_i} | S_0(u)(\la y)| |y| \,dS
+ \int_{\partial \Omega_i} |H_i(y)| | S_0(u)(\la y)| |y|^2 \,dS 
\le C \la^{-\frac{1}{2}} D(\Omega_i) \| u \|^2.
\end{aligned}
\]
Thus we have
\[
\tilde{N}(\xi) \le \frac{C}{(\delta^*)^{\frac{3}{2}}} D(\Omega_i) \| u \|^2
+C D(\Omega_i) \| u \|^2.
\]
and hence
\[
R(\la, u) \ge - C_2(1- \la)^2 e^2 \sum_{i=1}^m \sigma_i
D(\Omega_i) \| u \|^2
\]
for some $C_2>0$ independent of $e$, $\rho$, $\la$.
The remaining parts can be shown 
in the same way as Lemma \ref{lem:5.3}.
\end{proof}

\begin{proof}[Proof of Theorem \ref{thm:7.3}]
By Lemma \ref{lem:7.5}, Lemma \ref{lem:7.6} and Lemma \ref{lem:7.9}, 
we are able to modify the proofs in Sections 2-6. 
\end{proof}

\bigskip
\noindent {\bf Acknowledgements.}

The authors are grateful to anonymous referees for 
carefully reading the manuscript and providing us valuable comments.

The second author has been supported by JSPS KAKENHI Grant Numbers 
JP21K03317, JP24K06804.
This paper was carried out while the second author was staying 
at the University of Bordeaux. 
The second author is very grateful to all the staff of the University of Bordeaux 
for their kind hospitality.


\medskip


\begin{thebibliography}{99}

\bibitem{A}
G.~Auchmuty, 
\emph{Sharp boundary trace inequalities},
Proc. Roy. Soc. Edinburgh Sect. A, {\bf 144} (2014), 1--12.

\bibitem{AP}
A.~Azzollini, A.~Pomponio, 
\emph{Ground state solutions for the nonlinear Schr\"odinger-Maxwell equations}, 
J. Math. Anal. Appl. {\bf 345} (2008),  90--108.

\bibitem{BJL}
J.~Bellazzini, L.~Jeanjean, T.~Luo, 
\emph{Existence and instability of standing waves with prescribed norm 
for a class of Schr\"odinger-Poisson equations}, 
Proc. London Math. Soc. {\bf 107} (2013), 303--339.

\bibitem{BS1}
J.~Bellazzini, G.~Siciliano, 
\emph{Stable standing waves for a class of nonlinear Schr\"odinger-Poisson
equations}, 
Z. Angew. Math. Phys. {\bf 62} (2011), 267--280. 

\bibitem{BF}
V.~Benci, D.~Fortunato,
\emph{Solitons in Schr\"odinger-Maxwell equations},
J. Fixed Point Theory Appl. {\bf 15} (2014), 101--132.

\bibitem{Ben1}
K.~Benmlih, 
\emph{Stationary solutions for a Schr\"{o}dinger-Poisson system in $\Bbb R^3$},
Electron. J. Differ. Equ. Conf. {\bf 9} (2002), 65--76.

\bibitem{Ben2}
K.~Benmlih
\emph{A note on a 3-dimensional stationary Schr\"{o}dinger-Poisson system},
Electron. J. Differential Equations, {\bf 2004} (2004), No. 26.

\bibitem{BC}
H.~Berestycki, T.~Cazenave, 
\emph{Instabilit\'e des \'etats stationnaires dans les \'equations
de Schr\"odinger et de Klein-Gordon non lin\'eaires},
C. R. Acad. Sci. Paris S\'er. I Math.
{\bf 293} (1981) 489--492.

\bibitem{BV}
D.~Bonheure, J.~Van Schaftingen,
\emph{Groundstates for the nonlinear Schr\"odinger equation 
with potential vanishing at infinity}, 
Annali di Mathematica, {\bf 189} (2010), 273--301.

\bibitem{CDSS}
I.~Catto, J.~Dolbeault, O.~S\'anchez, J.~Soler,
\emph{Existence of steady states for the Maxwell-Schr\"odinger-Poisson system:
exploring the applicability of the concentration-compactness principle},
Math. Models Methods Appl. Sci. {\bf 23} (2013), 1915--1938.

\bibitem{Ca} T.~Cazenave, 
\emph{Semilinear Schr\"odinger Equations}, 
Courant Lecture Notes in Mathematics {\bf 10} (2003), 
American Mathematical Society.

\bibitem{CW}
M.~Colin, T.~Watanabe,
\emph{Standing waves for the nonlinear Schr\"odinger equation 
coupled with the Maxwell equation}, 
Nonlinearity, {\bf 30} (2017), 1920--1947. 

\bibitem{CW2}
M.~Colin,T.~Watanabe, 
\emph{A refined stability result for standing waves of 
the Schr\" odinger-Maxwell system}, 
Nonlinearity {\bf 32} (2019), 3695--3714. 

\bibitem{CW4}
M.~Colin, T.~Watanabe,
\emph{Stable standing waves for Nonlinear Schr\"odinger-Poisson system 
with a doping profile},
preprint.
\url{https://arxiv.org/abs/2409.01842}

\bibitem{CW5}
M.~Colin, T.~Watanabe,
\emph{Ground state solutions for Schr\"odinger-Poisson system with a doping profile},
to appper in Communications in Contemporary Mathematics.
\url{https://arxiv.org/abs/2411.02103}

\bibitem{DeLeo}
M.~De Leo, 
\emph{On the existence of ground states for nonlinear
Schr\"{o}dinger-Poisson equation},
Nonlinear Anal. {\bf 73} (2010), 979--986.

\bibitem{DR}
M.~De Leo, D.~Rial, 
\emph{Well posedness and smoothing effect of
Schr\"{o}dinger-Poisson equation},
J. Math. Phys. {\bf 48} (2007), 093509.

\bibitem{FCW}
B.~Feng, R.~Chen, Q.~Wang, 
\emph{Instability of standing waves for the nonlinear
Schr\"odinger-Poisson equation in the $L^2$-critical case},
J. Dynam. Differential Equations,
{\bf 32} (2020), 1357--1370.

\bibitem{FO1}
N.~Fukaya, M.~Ohta, 
\emph{Strong instability of standing waves with negative energy for
double power nonlinear Schr\"odinger equations},
SUT J. Math. {\bf 54} (2018), 131--143.

\bibitem{FO2}
N.~Fukaya, M.~Ohta, 
\emph{Strong instability of standing waves for nonlinear
Schr\"odinger equations with attractive inverse power potential},
Osaka J. Math. {\bf 56} (2019), 713--726.

\bibitem{Gri}
P.~Grinfeld, 
\emph{Hadamard's formula inside and out},
J. Optim. Theory Appl.
{\bf 146} (2010), 654--690.

\bibitem{Je}
J.~W.~Jerome, 
\emph{Analysis of charge transport,
A mathematical study of semiconductor devices},
Springer-Verlag, Berlin, 1996.

\bibitem{K2}
H.~Kikuchi,
\emph{Existence and stability of standing waves for Schr\"odinger-Poisson-Salter
equation},
Adv. Nonlinear Stud. {\bf 7} (2007), 403--437.

\bibitem{LeC}
S.~Le Coz, 
\emph{A note on Berestycki-Cazenave's classical instability
result for nonlinear Schr\"odinger equations},
Adv. Nonlinear Stud. {\bf 8} (2008), 455--463.

\bibitem{LZH}
Z.~Liu, Z.~Zhang, S.~Huang, 
\emph{Existence and nonexistence of positive solutions for a static
Schr\"odinger-Poisson-Slater equation},
J. Differential Equations,
{\bf 266} (2019), 5912--5941.

\bibitem{MRS}
P.~A.~Markowich, C.~A.~Ringhofer, C.~Schmeiser,
\emph{Semiconductor equations},
Springer-Verlag, Vienna, 1990.

\bibitem{O1}
M.~Ohta, 
\emph{Strong instability of standing waves for nonlinear
Schr\"odinger equations with harmonic potential},
Funkcial. Ekvac. {\bf 61} (2018), 135--143.

\bibitem{O2}
M.~Ohta, 
\emph{Strong instability of standing waves for nonlinear
Schr\"odinger equations with a partial confinement},
Commun. Pure Appl. Anal. {\bf 17} (2018), 1671--1680.

\bibitem{R}
D.~Ruiz, 
\emph{The Schr\"odinger-Poisson equation under the effect of a nonlinear local term}, 
J. Funct. Anal. {\bf 237} (2006), 655--674. 

\bibitem{Sel}
S.~Selberherr,
\emph{Analysis and Simulation of Semiconductor Devices},
Springer Vienna, 1984.

\bibitem{Si}
L.~Simon, 
\emph{Lectures on geometric measure theory},
Proceedings of the Centre for Mathematical Analysis,
{\bf 3}, Australian National University, Centre for Mathematical
Analysis, Canberra, 1983.

\bibitem{TC}
X.~Tang, S.~Chen, 
\emph{Ground state solutions of Nehari-Pohozaev type for
Schr\"odinger-Poisson problems with general potentials},
Discrete Contin. Dyn. Syst. {\bf 37} (2017), 4973--5002.

\bibitem{WL1}
X.~Wang, F.~Liao, 
\emph{Existence and nonexistence of solutions for
Schr\"odinger-Poisson problems},
J. Geom. Anal.
{\bf 33} (2023), Paper No. 56, 11.

\bibitem{WL2}
X.~Wang, F.~Liao, 
\emph{Ground state solutions of Nehari-Pohozaev type for
Schr\"odinger-Poisson problems with zero mass},
J. Math. Anal. Appl.
{\bf 533} (2024), Paper No. 128022, 19.

\bibitem{ZZ}
L.~Zhao, F.~Zhao,
\emph{On the existence of solutions for the Schr\"odinger-Poisson equations},
J. Math. Anal. Appl. {\bf 346} (2008), 155--169.

\end{thebibliography}
\end{document}